\colorlet{MyBlue}{DodgerBlue!75!Black}
\colorlet{MyGreen}{DarkGreen!85!Black}
  \def\cref#1{<#1>}%
\crefname{assumption}{Assumption}{Assumptions}
\crefname{assumptionloc}{Assumption}{Assumptions}
\theoremstyle{plain}
\newtheorem{theorem}{Theorem}		
\newtheorem{lemma}{Lemma}		
\newtheorem{proposition}{Proposition}		
\newtheorem*{corollary*}{Corollary}		
\theoremstyle{definition}
\newtheorem{assumption}{Assumption}		
\newtheorem*{definition*}{Definition}		
\newtheorem*{assumption*}{Assumptions}		
\newcommand{\asmtag}[1]{
  \let\oldtheassumption\theassumption
  \renewcommand{\theassumption}{#1}
  \g@addto@macro\endassumption{
    \addtocounter{assumption}{-1}
    \global\let\theassumption\oldtheassumption}
  }
\theoremstyle{remark}
\newtheorem*{remark*}{Remark}		
\begin{document}
%
\title{Push--Pull with Device Sampling}
%
%
%

\author{Yu-Guan Hsieh, Yassine Laguel, Franck Iutzeler, Jérôme Malick
\thanks{Yu-Guan Hsieh and Franck Iutzeler are with Université Grenoble Alpes, Grenoble, France (email: yu-guan.hsieh@univ-grenoble.alpes; franck.iutzeler@univ-grenoble.alpes).}
\thanks{Yassine Laguel was with Université Grenoble Alpes, Grenoble, France. He is now with Rutgers University, NJ 07102 USA (email: laguel.yassine@gmail.com).}
\thanks{Jérôme Malick is with CNRS and Université Grenoble Alpes, Grenoble, France
(email: jerome.malick@cnrs.fr).}}

%
%

\markboth{}%
{Hsieh \MakeLowercase{\textit{et al.}}: Push--Pull with Device Sampling}
%

\maketitle

\begin{abstract}
We consider decentralized optimization problems in which a number of agents collaborate to minimize the average of their local functions by exchanging over an underlying communication graph. 
Specifically, we place ourselves in an asynchronous model where only a random portion of nodes perform computation at each iteration, while the information exchange can be conducted between all the nodes and in an asymmetric fashion.
For this setting, we propose an algorithm that combines gradient tracking with a network-level variance reduction (in contrast to variance reduction within each node).
This enables the nodes to track the average of the gradients of the objective functions.
Our theoretical analysis shows that the algorithm converges linearly, when the local objective functions are strongly convex, under mild connectivity conditions on the expected mixing matrices. 
In particular, our result does not require the mixing matrices to be doubly stochastic.
In the experiments, we investigate a broadcast mechanism that transmits information from computing nodes to their neighbors, and confirm the linear convergence of our method on both synthetic and real-world datasets.

\end{abstract}

\begin{IEEEkeywords}
decentralized optimization, convex optimization, random 
gossip, device sampling
\end{IEEEkeywords}

\section{Introduction}
\label{set:intro}
\IEEEPARstart{I}{n} this paper, we focus on solving the optimization problem
\begin{equation}
    \tag{P}
    \label{eq:problem}
    \min_{\point\in\vecspace} \obj(\point) \defeq
    \frac{1}{\nWorkers} \sumworker \vw[\obj](\point)
\end{equation}
where each function $\vw[\obj]\from\vecspace\to\R$ is available only locally at the $\worker$-th node of a graph. Hence, in order to reach a consensus on the minimum of \eqref{eq:problem}, the $\nWorkers$ nodes have to communicate using the graph's edges.

Such decentralized optimization problems have been widely studied in the literature at least since the pioneering works of Bertsekas and Tsitsiklis \cite{tsitsiklis1986distributed,bertsekas2015parallel}. In terms of applications, decentralized optimization methods are popular for regression or classification problems when the communication possibilities between the nodes are scarce and cannot be handled by a central entity (\eg for wireless sensor networks,  IoT-enabled edge devices, etc.); see the recent surveys \cite{nedic2018network,yang2019survey,assran2020advances,nedic2020distributed}. In these applications, the workload and communications between the nodes are of primary importance.

The computation at the node level mainly depends on which optimization method serves as a basis. 
If the nodes are able to solve optimization sub-problems, the Alternating Direction Method of Multipliers (ADMM) and other dual methods can be extended to distributed setting \cite{duchi2011dual,boyd2011distributed}. At the other end of the spectrum, stochastic gradients methods are very popular since they require minimal computation at each node \cite{nedic2009distributed}. 
Gradient-based methods offer a good comprise between these two extremes and currently know a rebirth, especially for machine learning applications; see \eg the recent \cite{kovalev2020optimal}. 

In terms of exchanges, all communications between nodes have to go through the edges of the graph. If the graph is undirected (\ie the edges are all bidirectional), the nodes can gossip to average their values. Mathematically, this corresponds to multiplying the agents' states by a doubly-stochastic matrix; see \cite[Sec. II]{nedic2020distributed} for details. 
However, if the graph is directed, such direct gossiping is no longer possible since maintaining both a consensus among the nodes and the average of their values is not possible at the same time \cite{hendrickx2015fundamental}. To overcome this problem, two main type of methods have been developed. First, Push--Sum methods (or ratio consensus) consist in exchanging an additional ``weighting''; these methods can reach an average consensus for the ratio of the two values \cite{kempe2003gossip,ICH13,nedic2016stochastic}. However, the analysis of Push--Sum gradient methods is often quite involved and the algorithm can become numerically unstable due to division by very small values, see \eg the simulations of \cite{PSXN20} as well as references therein. Second, Push--Pull methods rely on two communications steps with different mixings to maintain convergence, offering strong theoretical guarantees as well as good practical performance \cite{XK18,PSXN20}.

Finally, a desirable feature in decentralized methods is the possibility to allow the nodes to randomly awaken, compute, and send/receive information; which is generally called randomized gossiping \cite{boyd2006randomized} or asynchronous decentralized methods \cite{assran2020advances,iutzeler2013asynchronous}. In terms of analysis, this consists in replacing the fixed communication matrices with random ones having the support corresponding to the active links; this was actively studied for decentralized gradient methods, including Push--Pull gradient \cite{PSXN20}.

\subsection{Contributions and outline}

In this paper, we focus on gradient-based methods for decentralized asynchronous optimization on directed graphs. 
We propose and analyze an asynchronous Push--Pull gradient algorithm where only a fraction of the nodes are actively computing a local gradient at each iteration. This feature is inspired from the device sampling (or client selection) procedure in federated learning \cite{MMRH+17,KMAB+21}. This popular mechanism enables to take into account the fact that all the nodes may not be available at all time and furthermore that querying all gradients at each iteration may be a waste of computational power if the nodes' values only change by a little amount.

In terms of algorithm, device sampling calls for a variance reduction mechanism in order to mitigate the noise induced by the sampling of the nodes. 
To achieve this, we introduce a SAGA-like \cite{DBL14} update at the network level; see Example\;\ref{ex:saga}.
This additional step thus requires an original analysis. 

The remainder of the paper is structured as follows.
The introduction is completed by an overview on related literature.
In \cref{sec:algo}, we present our general algorithmic template (\acl{PPDS}), together with some specific cases of interest, connecting our method with existing methods. 
In \cref{sec:conv}, we provide linear convergence results under classical convexity/smoothness assumptions on the objective functions and weak assumptions on communications. 
\cref{sec:anal} and \cref{sec:sim} are dedicated to the detailed convergence analysis and illustrative numerical simulations.
Finally, proofs of a couple of   technical intermediate results are given in Appendix.

\subsection{Related works}
Direct extensions of the gradient method to the decentralized setting rely on decreasing stepsizes to converge and are thus limited to sublinear convergence rates, even if the minimized functions are smooth and strongly convex. To overcome this situation, the gradient tracking technique was introduced; it consists in dynamically tracking the average value of the gradient and using this value instead of the local gradient. This technique enables the use of a fixed stepsize and exhibits much better rates in theory and in practice  \cite{xu2015augmented,nedic2017achieving,qu2017harnessing,shi2015extra}; see also the recent \cite{li2020revisiting}. 
Gradient tracking can be intuitively seen as a variance reduction at the network level. The method presented in this paper extends this idea of variance reduction to device sampling. 
Note also that in the case where the nodes' functions are themselves a finite sum, this sum can be sampled, and variance reduction can be additionally applied at the node level \cite{XKK20,hendrikx2021optimal}, however this specific form is out of the scope of the present paper.

AB/Push--Pull gradient methods naturally involve gradient tracking; see \eg \cite[Rem.~1]{PSXN20} and more generally \cite{XK18,ZY19,SXK20,PSXN20}. These algorithms share common ingredients and mainly differ in their communications models. The works that are the most closely related to the asynchronous directed setup considered in this paper are \cite{PSXN20} and \cite{ZY19}. These two papers study an asynchronous version of AB/Push--Pull which share similarities, in the update and the communication scheme, with our proposed method (more precisely, with the special setup of Example~\ref{ex:broadcast}).
However, in contrast to our method, these methods require every node that is involved in the communication step to perform a local update.
Moreover, the analysis of \cite{PSXN20} only works for the more restrictive case where the non-diagonal coefficients of the mixing matrices are sufficiently small.
Note finally that \cite{ZY19} does not consider a random network model, but instead performs an analysis in terms of the worst-case dependence on the delays. This analysis is thus complementary to our work.


\subsection{Basic notation and definitions}

Throughout the paper, we use bold lowercase letters to denote vectors and capital letters to denote matrices.
$\Id_k$ and $\ones_k$ respectively represent the identity matrix of size $k\times k$ and the $k-$dimensional vector containing all ones.
The subscript is omitted when the dimension is clear from the context.
We also define $\avgMat=\ones_{\nWorkers}\ones_{\nWorkers}^{\top}/\nWorkers$ as the projection matrix onto the consensus space,
and denote by $\sradius(P)$ the spectral radius of a matrix $P$.

The interaction topology between the nodes is modeled by a directed graph $\graph=(\vertices,\edges)$, where $\vertices$ is the set of vertices (nodes) and $\edges\in\vertices\times\vertices$ is the set of edges, such that node $\worker$ can send information to $\workeralt$ only if $(\worker,\workeralt)\in\edges$.
The out-neighbors and in-neighbors of a node $\worker$ are respectively defined by
\begin{equation}
    \notag
    \nhdgraph^{out}_{\worker}=\setdef{\workeralt\in\workers}{(\worker,\workeralt)\in\edges},
    ~\nhdgraph^{in}_{\worker}=\setdef{\workeralt\in\workers}{(\workeralt,\worker)\in\edges}.
\end{equation}
When the graph is undirected, the two sets coincide and we simply write $\nhdgraph_{\worker}$.
We say that the 
matrix $\mat=(\matEle_{\worker\workeralt})\in\R^{\nWorkers\times\nWorkers}$ is compatible with the underlying communication topology if $\matEle_{\worker\workeralt}=0$ whenever $(\workeralt,\worker)\notin\edges$.

Finally we introduce the aggregate objective function, $\objMulti(\jstate)=\sumworker\vw[\obj](\vw[\state])$, as a function of the 
variable $\jstate=[\vw[\state][1],\ldots,\vw[\state][\nWorkers]]^{\top}\!\in\R^{\nWorkers\times\vdim}$. 
When $\objMulti$ is differentiable, we have
\begin{equation}
    \notag
    \grad\objMulti(\jstate)=[\grad\vw[\obj][\start](\vw[\state][1]),
    \ldots, \grad\vw[\obj][\nWorkers](\vw[\state][\nWorkers])]^{\top}.
\end{equation}


\section{Algorithms: existing, new, and examples}
\label{sec:algo}


In this section, we present our asynchronous Push-Pull gradient algorithm with device sampling.
Prior to that, we recall the existing AB/Push--Pull method \cite{XK18,PSXN20} which inspires our algorithm.
After detailing our general template, we instantiate it in several situations of interest, revealing its versatility.




\subsection{The AB/Push--Pull method}\label{sec:PP}

If all the nodes are active at each iteration, 
the communication setup reduces to that of synchronous decentralized optimization.
In this situation and assuming that the functions $f_i$ are differentiable, the AB/Push--Pull algorithm \cite{XK18,PSXN20}
is described as follows.
In addition to the decision variables $\vwt[\state]$ that should minimize~$\obj$, a variable $\vwt[\gstate]$ is introduced to track the gradient of $\obj$. 
Then, provided  $\step>0$ a constant stepsize and two mixing matrices $\mixing=(\vm[\mixingEle])\in\R^{\nWorkers\times\nWorkers}$ and $\mixingalt=(\vm[\mixingaltEle])\in\R^{\nWorkers\times\nWorkers}$
, the update of the algorithm at iteration\;$\run$ writes
\begin{equation}
    \notag
    \begin{aligned}
    \vwtupdate[\state] 
    &=\sum_{\workeralt\in\workers}\vm[\mixingEle]\vwt[\state][\workeralt] - \step\,\vwt[\gstate],\\
    \vwtupdate[\gstate]
    &=\sum_{\workeralt\in\workers}\vm[\mixingaltEle]\vwt[\gstate][\workeralt]
    +\grad\vw[\obj](\vwtupdate[\state])-\grad\vw[\obj](\vwt[\state]).
    \end{aligned}
\end{equation}
%
It is required that $\mixing$ and $\mixingalt$ have non-negative weights and 
be respectively row-stochastic ($\mixing\ones=\ones$) and column-stochastic ($\ones^{\top}\mixingalt=\ones^{\top}$).
With the notation $\vt[\jgstate]=[\vwt[\gstate][1],\ldots,\vwt[\gstate][\nWorkers]]^{\top}$,
the update can also be written, in a matrix form, as
\begin{equation}
    \label{eq:PP}
    \tag{PP}
    \begin{aligned}
    \update[\jstate] &= \vt[\mixing]\vt[\jstate] - \step \vt[\jgstate],\\
    \update[\jgstate] &= \vt[\mixingalt]\vt[\jgstate] + \grad\objMulti(\update[\jstate])-\grad\objMulti(\vt[\jstate]).
    \end{aligned}
\end{equation}

Intuitively, the use of row-stochastic matrices drive $\vwt[\state]$ to consensus, while the use of column-stochastic matrices preserves the total mass, \ie $\ones^{\top}\mixingalt\vvec=\ones^{\top}\vvec$ for any $\vvec\in\R^{\nWorkers}$.
Moreover, if the difference $\grad\vw[\obj](\vwtupdate[\state])-\grad\vw[\obj](\vwt[\state])$ tends to zero, $\vwt[\gstate]$ converges to a multiple of $\sumworker\grad\vw[\obj](\vwt[\state])$.
In fact, from the Perron-Frobenius theorem, we know that if $\mixingalt$ is primitive\footnote{A square non-negative matrix $\mat$ is called primitive if there exists a power $k\ge1$ such that $\mat^{k}>0$; see \cite[Th.~8.5.2]{horn2012matrix}} then $\lim_{\toinf}\mixingalt^{\run}=\rightEigvecB\ones^{\top}$ where $\rightEigvecB$ is the right eigenvector of $\mixingalt$ associated with the eigenvalue $1$ such that $\ones^{\top}\rightEigvecB=1$.
Therefore, asymptotically every $\vwt[\state]$ descends in the direction opposite to the gradient of $\obj$.
Mathematically, it can be proven that under standard convexity assumptions AB/Push--Pull converges linearly with sufficiently small constant step-size $\step$ \cite[Th.~1]{pu2018push}.

\subsection{Proposed Push--Pull with Device Sampling}

Our algorithm can be viewed as a generalization of  AB/Push--Pull to handle the device sampling mechanism.
First, in order to allow for asynchronicity, let $\seqinf[\vt[\mixing]]$ and $\seqinf[\vt[\mixingalt]]$ be two sequences of mixing matrices that are compatible with the underlying communication topology.
Now, to handle device sampling, we denote by $\vt[\workers]$ the set of nodes that are active at time $\run$.
This means that node $\worker$ computes a local gradient at round $\run$ if and only if $\worker\in\vt[\workers]$.

With the notations $\vt[\mixing]=(\vmt[\mixingEle])$, $\vt[\mixingalt]=(\vmt[\mixingaltEle])$, and $\vt[\sampMat]=\diag(\one_{\worker\in\vt[\workers]})$, \ie $\vt[\sampMat]$ is the diagonal matrix in $\R^{\nWorkers\times\nWorkers}$ whose $\worker$-th diagonal element is $1$ if $\worker\in\vt[\workers]$ and $0$ otherwise, each iteration of our proposed \ac{PPDS} can be stated in the compact form
\begin{equation}
    \label{eq:PPDS}
    \tag{PPDS}
    \begin{aligned}
    \inter[\jgstate] &= \vt[\jgstate] + \vt[\sampMat](\grad\objMulti(\vt[\jstate])-\grad\objMulti(\vt[\jcstate])),\\
    \inter[\jstate] &= \vt[\jstate] - \step \vt[\sampMat]\inter[\jgstate],\\
    \update[\jcstate] &= \vt[\sampMat] \vt[\jstate] + (\Id-\vt[\sampMat]) \vt[\jcstate],\\
    \update[\jgstate] &= \vt[\mixingalt]\inter[\jgstate], ~~~
    \update[\jstate] = \vt[\mixing]\inter[\jstate].
    \end{aligned}
\end{equation}

Several remarks are in order.
First, we introduce auxiliary local variable $\vwt[\cstate]$ for each node and write $\vt[\jcstate]=[\vwt[\cstate][1],\ldots,\vwt[\cstate][\nWorkers]]^{\top}$. 
The presence of these variables indicate the nodes store their last computed gradient. This is necessary because $\vwt[\state]$ can be modified by network communication between two successive activations of node $\worker$.
In fact, while only the active nodes perform local updates at each iteration, the inactive nodes can be involved in the communication process.
This flexibility allows us to take into account a wider class of algorithms, as illustrated in the forthcoming examples.
Finally, as in AB/Push--Pull, we only require the matrices $\seqinf[\vt[\mixing]]$ and $\seqinf[\vt[\mixingalt]]$ to be respectively row- and column-stochastic.
This means that we allow for one-way communication and in particular inactive nodes may passively receive information without sending back their local states. 

In terms of implementation, our method \eqref{eq:PPDS} gives \cref{algo:PPDS} for asynchronous optimization on directed graphs. In the next section, we discuss special cases and show that we recover existing algorithms.

\begin{algorithm}[!ht]
    \caption{\acs{PPDS} (at each node $\worker$)}
    \label{algo:PPDS}
\begin{algorithmic}[1]
    \State {\bfseries Initialize:}
         $\vwt[\gstate][\worker][\start]=\grad\vw[\obj](\vwt[\state][\worker][\start]$);
         $\vwt[\cstate][\worker][\start]=\vwt[\state][\worker][\start]$
    \For{$\run=\running$}
    \State \texttt{\underline{Local update}}
    \vskip 0.2em
    \If{$\worker\in\vt[\workers]$}
    \vskip 0.1em
    \State
    $\vwtinter[\gstate] \subs \vwt[\gstate]+\grad\vw[\obj](\vwt[\state])-\grad\vw[\obj](\vwt[\cstate])$
    \State $\vwtinter[\state] \subs \vwt[\state] - \step\vwtinter[\gstate]$
    \State Set $\vwtupdate[\cstate] \subs \vwt[\state]$ and store $\grad\vw[\obj](\vwtupdate[\cstate])$
    \Else
    \State $\vwtinter[\gstate] \subs \vwt[\gstate]$;~ $\vwtinter[\state] \subs \vwt[\state]$;~ $\vwtupdate[\cstate] \subs \vwt[\cstate]$
    \EndIf
    \State \texttt{\underline{Communication}}
    \vskip 0.2em
    \State $\vwtupdate[\state] =\sum_{\workeralt\in\workers}\vmt[\mixingEle]\vwtinter[\state][\workeralt]$
    \Comment{$\vt[\mixing]$ is row-stochastic}
    \State $\vwtupdate[\gstate] =\sum_{\workeralt\in\workers}\vmt[\mixingaltEle]\vwtinter[\gstate][\workeralt]$
    \Comment{$\vt[\mixingalt]$ is column-stochastic}
    \EndFor
\end{algorithmic}
\end{algorithm}

\subsection{Special cases}\label{sec:special}

\subsubsection{AB/Push--Pull}

We first demonstrate that the original AB/Push-Pull algorithm \cite{XK18,PSXN20} indeed falls within the \ac{PPDS} framework.
For this, we fix $\vt[\mixing]\equiv\mixing$, $\vt[\mixingalt]\equiv\mixingalt$, and $\vt[\workers]=\workers$.
Then, after rearranging, the \eqref{eq:PPDS} update can be written as

\begin{equation}
    \notag
    \begin{aligned}
    \update[\jstate] &= \mixing(\vt[\jstate]-\step\inter[\jgstate]),\\
    \vt[\jgstate][\run+\frac{3}{2}]
    &= \mixingalt\inter[\jgstate] + \grad\objMulti(\update[\jstate])
    -\grad\objMulti(\vt[\jstate]).
    \end{aligned}
\end{equation}
This is exactly the adapt-then-combine variant of AB/Push--Pull as presented in \cite{PSXN20}.

\vskip 2pt
\subsubsection{Communication between active nodes}
\label{ex:active-com}
We can imagine a situation where only active agents participate in the communication.
Then, these active agents may communicate with each other using mixing matrices $\mixing(\vt[\workers]), \mixingalt(\vt[\workers])$ that are compatible with the induced subgraph $\inducedgraph{\vt[\workers]}$, defined by the vertex set\;$\vt[\workers]$ and the edges of $\edges$ that connect two vertices of $\vt[\workers]$. For example, if the graph is symmetric and $\mixing(\vt[\workers])=\mixingalt(\vt[\workers])$ is the Metropolis matrix of $\inducedgraph{\vt[\workers]}$, we have $\vt[\mixing]=\vt[\mixingalt]$ and
\begin{equation}
    \notag
    \vmt[\mixingEle]
    = \left\{
        \begin{array}{ll}
            \frac{1}{\max(
            \degr_{\run}(\worker),
            \degr_{\run}(\workeralt))} & \text{if } \worker\neq\workeralt \text{ and }
            \{\worker,\workeralt\}\in\edges\intersect 2^{\vt[\workers]},\\
            1 - \sum_{\indg=1}^{\nWorkers}
            \vmt[\mixingEle][\worker\indg]
            \one_{\indg\neq\workeralt}
            & \text{if } \worker=\workeralt,\\
            0 & \text{otherwise},
        \end{array}
        \right.
\end{equation}
where $\degr_{\run}(\worker)=\card(\vw[\nhdgraph]\intersect\vt[\workers])$ is the degree of $\worker\in\vt[\workers]$ in the induced graph $\inducedgraph{\vt[\workers]}$.

\vskip 2pt
\subsubsection{Broadcast-type update}
\label{ex:broadcast}
As mentioned previously, our algorithm allows inactive nodes to passively receive information from active nodes.
Therefore, the active nodes can simply broadcast their local variables to their neighbors, no matter whether these neighbors are active or not.
To ensure the row-stochasticity of $\vt[\mixing]$, the received $\vwt[\state]$'s are averaged out.
On the other hand, to guarantee the column-stochasticity of $\vt[\mixingalt]$, an active node divides its $\vwt[\gstate]$ by the number of nodes it sends the information to, 
as usually done in a push-sum scheme. 

For concreteness, let us denote by $\nhdgraph^{out}_{\workeralt,\run}$  the set of neighbors that active worker $\workeralt\in\vt[\workers]$ transmit information to (including itself) in round $\run$ and by $\nhdgraph^{in}_{\worker,\run}=\setdef{\workeralt\in\vt[\workers]}{\worker\in\nhdgraph^{out}_{\workeralt,\run}}$ the set of active workers that send information to $\worker$ in this same round.
The mixing matrices $\vt[\mixing]$ and $\vt[\mixingalt]$ are then defined by
\begin{align*}
    \vmt[\mixingEle] &= 
    \left\{
    \begin{array}{ll}
        \makebox[2.6cm][l]{$\frac{1}{\card(\nhdgraph^{in}_{\worker,\run}\union\{\worker\})}$} &\text{ if $\workeralt\in\nhdgraph^{in}_{\worker,\run}\union\{\worker\}$},\\
        0 &\text{ otherwise;}\\
    \end{array}
    \right.
\end{align*}
\begin{align*}    \vmt[\mixingaltEle] &= 
    \left\{
    \begin{array}{ll}
        \makebox[2.6cm][l]{$\frac{1}{\card(\nhdgraph^{out}_{\workeralt,\run})}$} & \text{ if $\workeralt\in\nhdgraph^{in}_{\worker,\run}$},\\
        1 & \text{ if $\workeralt=\worker$ and $\workeralt\notin\vt[\workers]$,} \\
        0 & \text{ otherwise.}\\
    \end{array}
    \right.
\end{align*}

In this example, we see that our method offers an additional degree of freedom compared to \GPP \cite{PSXN20} since in that algorithm $\vmt[\mixingEle]>0$ only if $\worker,\workeralt \in \vt[\workers]$; this is not necessarily the case in our approach.

\vskip 2pt
\subsubsection{SAGA}
\label{ex:saga}
SAGA \cite{DBL14} is a well-known (centralized) variance reduction methods that replaces the stochastic gradient $\grad\vw[\obj](\vt[\state])$ with an unbiased gradient estimator with diminishing variance.
For this, we store a table of gradients $(\grad\vw[\obj](\vwt[\cstate]))_{\worker=1}^{\nWorkers}$, where, similar to \ac{PPDS}, $\vwt[\cstate]$ is the iterate at which $\grad\vw[\obj]$ was last evaluated.
Let $\worker_{\run}$ be sampled from the index set $\{1,\ldots,\nWorkers\}$.
The update of SAGA is then given by:
\begin{align}
    \label{eq:SAGA-vrgvec}
    \vt[\vrgvec] &= \grad\vw[\obj][\worker_{\run}](\vt[\state]) -  \grad\vw[\obj][\worker_{\run}](\vt[\cstate]) + \frac{1}{\nWorkers} \sumworker \grad\vw[\obj][\worker](\vt[\cstate]),\\
    \update[\state] &=
    \vt[\state] - \step\vt[\vrgvec].
    \notag
\end{align}
%

To recover SAGA from \ac{PPDS}, we set $\vt[\mixing]=\vt[\mixingalt]\equiv\avgMat$.
This ensures $\vwt[\gstate]=(1/\nWorkers)\sumworker \grad\vw[\obj][\worker](\vt[\cstate])$ and thus $\vwtinter[\gstate]$ is exactly updated as in \eqref{eq:SAGA-vrgvec} when $\worker$ is active. 
Specifically, if exactly one node is sampled at each iteration, \eqref{eq:PPDS} with step-size $\step$ and $\vt[\mixing]=\vt[\mixingalt]\equiv\avgMat$ is equivalent to SAGA with stepsize $\step/\nWorkers$.
If multiple workers are active at a same time slot, we get a mini-batch version of SAGA.

\section{Linear convergence of \acs{PPDS}}
\label{sec:conv}

In this section, we present convergence guarantees of \ac{PPDS} for strongly convex functions over a random network model.
Concretely,
we make the following standard convexity/smoothness assumption on the objective functions:

\begin{assumption}
\label{asm:functions}
All the individual $\vw[\obj]$'s are $\lips$-smooth and convex; the global function $\obj$ is $\str$-strongly convex.
\end{assumption}

Thanks to the strong convexity of $\obj$, we know there exists a unique solution of \eqref{eq:problem} which we will denote by $\sol$.
%
Moreover, we model $\seqinf[\vt[\workers]]$, $\seqinf[\vt[\mixing]]$, and $\seqinf[\vt[\mixingalt]]$ as random variables satisfying that:

\begin{assumption}
\label{asm:independence}
The random variables $\seqinf[(\vt[\workers], \vt[\mixing], \vt[\mixingalt])]$ are temporally \ac{iid}.
\end{assumption}

\cref{asm:independence} is actually only needed to provide the contractions of \cref{lem:mixing-contraction,lem:mixing-contraction-multi}. Hence,  it could be weakened accordingly. We chose to keep it as such for ease of reading and for consistency with the literature.

\subsection{The general case}

First, we present our linear convergence result under rather weak assumptions on communications (essentially that the information can flow all over the network) and device sampling (each node is sampled with positive probability). 

\begin{assumption}
\label{asm:matrices}
The mixing matrices $\seqinf[\vt[\mixing]]$ and $\seqinf[\vt[\mixingalt]]$ have the following properties:
\begin{enumerate}[a\upshape)] 
\item For all $\run\in\N$, $\vt[\mixing]$ is row-stochastic and $\vt[\mixingalt]$ is column-stochastic.
\item Both $\mixing\defeq\ex[\vt[\mixing][\start]]$ and $\mixingalt\defeq\ex[\vt[\mixingalt][\start]]$ are primitive.
\item There exists $\diaglow>0$ such that $\vmt[\mixingEle][\worker\worker]\ge\diaglow$ and $\vmt[\mixingaltEle][\worker\worker]\ge\diaglow$ for all $\worker\in\workers,\run\in\N$.
\label{asm-matrices-c}
\end{enumerate}
\end{assumption}

\begin{assumption}
\label{asm:sampling}
Every node is sampled with positive probability, \ie 
$\vw[\sampprob]\defeq\prob(\worker\in\vt[\workers][\start])>0$ for all $\worker\in\workers$. 
\end{assumption}

It is straightforward to verify that \cref{asm:independence,asm:matrices,asm:sampling} are fulfilled in all the aforementioned examples.
In particular, in Example \ref{ex:broadcast}, the primitivity of matrices $\mixing$ and $\mixingalt$ are ensured by the strong connectivity of the underlying graph $\graph$ since $\vm[\mixing]>0$ if and only if $(\workeralt,\worker)\in\edges$ if and only if $\vm[\mixingalt]>0$.
On the other hand, \cref{asm:matrices}c posits that at each iteration each nodes maintains a fraction of its previous states.
This rules out the counterexamples in which the states of the active nodes are always overwritten by those of the inactive states.
Under these fairly weak assumptions, we manage to prove the convergence of \ac{PPDS} as stated in the following theorem.

\begin{theorem}
\label{thm:cvg}
Let \cref{asm:functions,asm:independence,asm:matrices,asm:sampling} hold.
If \eqref{eq:PPDS} is run with a sufficiently small step-size $\step>0$, then
\begin{enumerate}[a\upshape)]
    \item $\vwt[\state]$ converges almost surely to the solution $\sol$.
    \item The expected squared distance between the iterate and the solution $\ex[\norm{\vwt[\state]-\sol}^2]$ vanishes geometrically.
\end{enumerate}
\end{theorem}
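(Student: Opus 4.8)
The plan is to control the joint evolution of three error quantities and show they form a linearly contracting system in expectation. First I would introduce a weighted notion of consensus error adapted to the (not doubly stochastic) setting: let $\leftEigvec$ be the left Perron vector of $\mixing=\ex[\vt[\mixing]]$ normalized by $\ones^\top\leftEigvec=1$, so that the "limiting average" of the $\vwt[\state]$'s is $\bar{\point}_\run=\leftEigvec^\top\vt[\jstate]$, and similarly let $\rightEigvecB$ be the right Perron vector of $\mixingalt$. The three natural Lyapunov components are then: (i) the optimization error $\norm{\bar{\point}_\run-\sol}$ of the weighted average; (ii) the state consensus error $\norm{\vt[\jstate]-\ones\bar{\point}_\run}$ (measured in a norm in which $\mixing$ is contractive on the complement of its Perron eigenspace); and (iii) the gradient-tracking error, i.e.\ the deviation of $\vt[\jgstate]$ from the rank-one profile $\rightEigvecB(\text{average gradient})^\top$, plus (iv) an extra term for the memory variables, $\norm{\vt[\jcstate]-\vt[\jstate]}$ or $\sum_\worker\norm{\vwt[\cstate]-\sol}^2$, since the SAGA-type delay introduces a fourth source of error that must be folded into the potential.

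Next I would derive a one-step inequality for each component by taking conditional expectation over $(\vt[\workers],\vt[\mixing],\vt[\mixingalt])$ given the past. The average-error recursion uses $\leftEigvec^\top\vt[\mixing]=\leftEigvec^\top$ (in expectation) together with $\str$-strong convexity and $\lips$-smoothness of $f$, giving a contraction factor $1-\Theta(\step\str)$ plus cross terms proportional to $\step$ times the consensus and tracking errors. The consensus recursion uses \cref{lem:mixing-contraction} (the promised contraction of $\vt[\mixing]$ in expectation) to get a factor $\sigma<1$, plus an $O(\step)$ perturbation from the $-\step\vt[\sampMat]\inter[\jgstate]$ term; \cref{asm:matrices}c is what guarantees the sampling/mixing interaction does not destroy this contraction. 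The tracking recursion is the delicate one: $\inter[\jgstate]=\vt[\jgstate]+\vt[\sampMat](\grad\objMulti(\vt[\jstate])-\grad\objMulti(\vt[\jcstate]))$ injects noise proportional to how stale the memory is, then $\vt[\mixingalt]$ contracts the off-consensus part via \cref{lem:mixing-contraction-multi}. Finally the memory recursion: each coordinate of $\vt[\jcstate]$ is refreshed to $\vwt[\state]$ with probability $\vw[\sampprob]>0$ and otherwise unchanged, so $\ex[\sum_\worker\norm{\vwt[\cstate]-\sol}^2]$ contracts by a factor $1-\min_\worker\vw[\sampprob]$ up to a term controlled by the current state error — this is exactly the standard SAGA bookkeeping, here applied at the network level.

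Assembling these four inequalities gives a vector recursion $V_{\run+1}\le M(\step)\,V_\run$ in expectation, where $V_\run\in\R^4_{\ge0}$ collects the four errors and $M(\step)$ is a nonnegative matrix that converges, as $\step\to0$, to a block-triangular matrix whose diagonal entries are $1$ (optimization), $\sigma<1$ (state consensus), $\sigmaalt<1$ (tracking consensus), and $1-\min_\worker\vw[\sampprob]<1$ (memory). The point $\step=0$ is degenerate — the optimization error does not move — so the argument is the usual one: show that for small $\step$ the spectral radius of $M(\step)$ drops strictly below $1$, either by exhibiting an explicit weighting vector $\xi>0$ with $M(\step)^\top\xi<\xi$ componentwise (a small-gain / Lyapunov-weights argument), or by a perturbation expansion of the Perron root. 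Part (b) of the theorem follows immediately since $\ex[\norm{\vt[\jstate]-\ones\sol}^2]\lesssim V_\run\to 0$ geometrically, and part (a), almost-sure convergence, follows from (b) by a Borel–Cantelli / supermartingale argument (the geometric decay of expectations makes $\sum_\run\norm{\vwt[\state]-\sol}^2$ summable in expectation, hence a.s.\ finite).

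The main obstacle I anticipate is the coupling between device sampling and the asymmetric mixing in the tracking recursion: because $\vt[\sampMat]$ and $\vt[\mixingalt]$ do not commute and $\vt[\mixingalt]$ is only column-stochastic, the "mass" $\ones^\top\vt[\jgstate]$ is preserved but its distribution across nodes is random, so one cannot simply project onto a fixed consensus subspace. Handling this cleanly requires the right choice of norm (weighted by $\rightEigvecB$) and a careful bound showing the $O(\step)$ and staleness-induced cross terms in $M(\step)$ are genuinely $O(\step)$ and $O(\step^2)$ respectively, so that they cannot lift the Perron root of the off-diagonal-coupled system above $1$. This is where \cref{asm:matrices}\ref{asm-matrices-c} and the positivity of every $\vw[\sampprob]$ are used in an essential way, and it is the step I would write out most carefully.
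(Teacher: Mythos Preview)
Your overall architecture---four coupled error terms, a linear recursion in expectation, a spectral-radius/perturbation argument at $\step=0$, then Borel--Cantelli---matches the paper's. But two of the ingredients you plan to use are not available under \cref{asm:matrices}, and the paper's main technical work is precisely to supply substitutes for them.

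First, there is \emph{no per-step contraction} of the consensus or tracking error when the mixing matrices are only row- (respectively column-) stochastic. The lemma you invoke for the state consensus, \cref{lem:mixing-contraction}, is stated and proved only under \cref{asm:matrices-bi}; under the general \cref{asm:matrices} the paper instead establishes a \emph{multi-step} contraction (\cref{lem:spectural-multi} and \cref{lem:mixing-contraction-multi}): there is an integer $\contractInt$ such that the product of $\contractInt+1$ successive random matrices contracts in expectation. This forces the recursion vector to carry $\contractInt+1$ consecutive values of each error term, so the linear system is $4(\contractInt+1)$-dimensional rather than $4\times4$ (\cref{lem:recineq-matrix-general}). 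Second, the fixed Perron vectors of the \emph{expected} matrices do not yield clean recursions: for your weighted average $\leftEigvec^{\top}\vt[\jstate]$ to evolve nicely you would need $\leftEigvec^{\top}\vt[\mixing]=\leftEigvec^{\top}$ for the random realization, and this holds only in expectation. The paper's remedy is to introduce \emph{time-varying random} probability vectors---$\vt[\leftEigvec]$ defined backward in time so that $\update[\leftEigvec^{\top}]\vt[\mixing]=\vt[\leftEigvec^{\top}]$ holds exactly, and $\vt[\rightEigvec]$ defined forward via $\update[\rightEigvec]=\vt[\mixingalt]\vt[\rightEigvec]$---and to measure the optimization error as $\norm{\vt[\leftEigvec^{\top}]\vt[\jstate]-\sol}^2$ and the tracking error relative to $\vt[\rightEigvec]\ones^{\top}\vt[\jgstate]$. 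With these two changes your plan goes through: the paper then uses exactly the eigenvalue-perturbation argument you sketch (computing the derivative of the leading eigenvalue at $\step=0$ and showing it is negative) and finishes almost-sure convergence via Markov plus Borel--Cantelli.
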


\cref{thm:cvg} shows that the nice properties of gradient tracking and variance reduced methods are also preserved by our algorithm: it converges with constant step-size and enjoys a linear convergence rate as centralized gradient descent.
Therefore, our method effectively reduces the variances of the noises induced by both sampling and communication.

We note that the assumptions for this result are quite similar to the ones for \GPP in \cite{PSXN20}, except that i) we do not put additional restrictions on the coefficients of the gossip matrix (unlike Eq.~(24a) in \cite{PSXN20}); and ii) we allow for a device sampling strategy that can be independent or correlated with the gossiping step.

\subsection{The case of doubly stochastic matrices.}
Due to the generality of the result, \cref{thm:cvg} only describes the qualitative behavior of the algorithm.
To derive 
a convergence rate with explicit dependence to the problem parameters, we focus on the specific situation where the mixing matrices are doubly stochastic and the active devices are sampled uniformly at random.
Formally, we make the following assumptions.

\asmtag{\ref*{asm:matrices}$'$}
\begin{assumption}
\label{asm:matrices-bi}
For all $\run\in\N$, both $\vt[\mixing]$ and $\vt[\mixingalt]$ are doubly stochastic.
Moreover, we have the inequality 
\begin{equation}
    \notag
    \cfactor\defeq \max
    (\sradius(\ex[\vt[\mixing^{\top}][\start](\Id-\avgMat)\vt[\mixing][\start]]),
    \sradius(\ex[\vt[\mixingalt^{\top}][\start](\Id-\avgMat)\vt[\mixingalt][\start]]))<1.
\end{equation}
%
%
\end{assumption}

\asmtag{\ref*{asm:sampling}$'$}
\begin{assumption}
\label{asm:sampling-uni}
$\vt[\workers]$ is of fixed size $\nWorkersActive$ and is sampled uniformly from all the subsets of this size.
\end{assumption}

The bistochasticity assumption is for example verified when the communication matrices are the Metropolis matrices of the subgraphs of the active nodes (covered by Example\;\ref{ex:active-com} of \cref{sec:special}).
However, it is still possible to have communications between active and inactive nodes under this assumption, as demonstrated by the SAGA example (example \ref{ex:saga}).
On the technical side, the bistochasticity of the matrices and the condition $\cfactor<1$ allows us to derive a per-step contraction of the variance of the nodes' variables (see \cref{lem:mixing-contraction}).
Using uniform sampling further facilitates the analysis and makes the final expression much more concise.
Building upon these,
the next theorem states the step-size condition and the convergence rate of \ac{PPDS} when these assumptions are fulfilled.

\begin{theorem}
\label{thm:cvg-bistoch}
Let \cref{asm:functions,asm:independence,asm:matrices-bi,asm:sampling-uni} hold.
If \eqref{eq:PPDS} is run with step-size
\begin{equation}
    \label{eq:eta-condition}\resizebox{0.9\linewidth}{!}{$%
    \step\le\min\left(
    \frac{(1-\cfactor)^2}{14\lips}\sqrt{\frac{\nWorkers}{\nWorkersActive}},
    \frac{(1-\cfactor)^2}{2304\lips}\left(\frac{\nWorkers}{\nWorkersActive}\right)^{\frac{3}{2}},
    \frac{1}{576\lips}\sqrt{\frac{\nWorkers}{\nWorkersActive}}
    \right),$}
\end{equation}
then the expected squared distance 
$\ex[\norm{\vwt[\state]-\sol}^2]$ vanishes geometrically in $\bigoh(\cvgrate^{\run})$ with $\cvgrate=\max\left(1-\frac{\step\str\nWorkersActive}{2\nWorkers},1-\frac{\nWorkersActive}{4\nWorkers}\right)$.
In particular, it takes 
\begin{equation}
    \label{eq:n-iterations}
    \bigoh\left(\left(\frac{\lips}{\str}\sqrt{\frac{\nWorkers}{\nWorkersActive}}\frac{1}{(1-\cfactor)^2}+\frac{\nWorkers}{\nWorkersActive}\right)\log\left(\frac{1}{\eps}\right)\right)
\end{equation}
iterations to achieve $\eps$ accuracy when $\step$ is suitably tuned.
\end{theorem}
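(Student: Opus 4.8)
The plan is to set up a small-dimensional linear recursion for a carefully chosen error vector and then show that, under the stated step-size condition, the associated iteration matrix has spectral radius bounded by $\cvgrate$. Concretely, I would track three (or four) nonnegative scalar quantities: the optimality gap $\ex[\norm{\bar{\state}_{\run}-\sol}^2]$ of the averaged iterate $\bar{\state}_{\run}=\tfrac1{\nWorkers}\ones^\top\jstate_{\run}$, the consensus error $\ex[\norm{(\Id-\avgMat)\jstate_{\run}}^2]$, the gradient-tracking error $\ex[\norm{(\Id-\avgMat)\jgstate_{\run}}^2]$, and the ``staleness'' term $\ex[\norm{\jstate_{\run}-\jcstate_{\run}}^2]$ coming from the SAGA-type memory. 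The doubly-stochastic assumption~\ref{asm:matrices-bi} is exactly what makes $\bar{\state}_{\run}$ behave like a perturbed centralized gradient step (the mixing preserves the average of both $\jstate$ and $\jgstate$), and the uniform-sampling assumption~\ref{asm:sampling-uni} makes $\ex[\vt[\sampMat]]=\tfrac{\nWorkersActive}{\nWorkers}\Id$, so the sampled gradient direction is an unbiased scaling of the true average gradient.

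First I would write the exact recursions: using $\ones^\top\vt[\mixingalt]=\ones^\top$ and $\vt[\mixing]\ones=\ones$, derive that $\bar{\state}_{\run+1}=\bar{\state}_{\run}-\tfrac{\step}{\nWorkers}\ones^\top\vt[\sampMat]\,\grad\objMulti$-type expression, then take conditional expectation to get a descent inequality of the form $\ex[\norm{\bar{\state}_{\run+1}-\sol}^2]\le(1-\tfrac{\step\str\nWorkersActive}{2\nWorkers})\ex[\norm{\bar{\state}_{\run}-\sol}^2]+(\text{curvature cross-terms in the other three quantities})$, using $\str$-strong convexity, $\lips$-smoothness and Young's inequality. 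Next, for the consensus and tracking errors I would invoke \cref{lem:mixing-contraction} to get the per-step contraction factor $\cfactor<1$ from the mixing step, with the ``leakage'' proportional to $\step^2$ times the gradient-related terms; the extra $\sqrt{\nWorkers/\nWorkersActive}$ and $(\nWorkers/\nWorkersActive)^{3/2}$ factors in \eqref{eq:eta-condition} will appear precisely when bounding the variance of $\vt[\sampMat]\inter[\jgstate]$ and of the SAGA correction in terms of $\ex[\vt[\sampMat]]=\tfrac{\nWorkersActive}{\nWorkers}\Id$. For the staleness term I would use the update $\update[\jcstate]=\vt[\sampMat]\jstate_{\run}+(\Id-\vt[\sampMat])\jcstate_{\run}$ to show $\ex[\norm{\jstate_{\run+1}-\jcstate_{\run+1}}^2]\le(1-\tfrac{\nWorkersActive}{4\nWorkers})\ex[\norm{\jstate_{\run}-\jcstate_{\run}}^2]+(\text{small multiples of the other terms})$, which is where the second term $1-\tfrac{\nWorkersActive}{4\nWorkers}$ in $\cvgrate$ originates.

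Assembling these four inequalities gives $V_{\run+1}\le M(\step)\,V_{\run}$ componentwise for the vector $V_\run$ of the four quantities and a $4\times4$ nonnegative matrix $M(\step)$ whose diagonal entries are the contraction factors above and whose off-diagonal entries are $O(\step)$ or $O(\step^2)$. I would then exhibit an explicit positive weight vector $w$ (with the weights scaling appropriately in $\nWorkers/\nWorkersActive$ and $(1-\cfactor)$) such that $w^\top M(\step)\le \cvgrate\, w^\top$ whenever $\step$ obeys \eqref{eq:eta-condition}; the weighted Lyapunov function $w^\top V_\run$ then decays like $\cvgrate^\run$, which gives the geometric rate and, after optimizing $\step$ against the two terms in $\cvgrate$, the iteration complexity \eqref{eq:n-iterations}. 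The main obstacle I expect is the bookkeeping in step~2–3: getting the cross-term coefficients in $M(\step)$ small enough — in particular controlling how the SAGA staleness feeds back into the gradient-tracking error, and how the sampling variance (scaling like $\nWorkers/\nWorkersActive$) interacts with the mixing contraction — so that a single scalar step-size bound of the form \eqref{eq:eta-condition} suffices to make $w^\top M(\step)\le\cvgrate\,w^\top$; the constants $14$, $2304$, $576$ are an artifact of the particular Young's-inequality splits chosen to close this loop.
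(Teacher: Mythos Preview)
Your proposal is essentially the paper's approach: a four-component linear recursion followed by an explicit positive weight vector $w$ such that $w^\top M(\step)\le\cvgrate\,w^\top$ under \eqref{eq:eta-condition}. Two points where the paper differs from your sketch are worth flagging.

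First, the fourth Lyapunov component. The paper does \emph{not} track $\ex[\norm{\jstate_\run-\jcstate_\run}^2]$ but rather the SAGA-style control-variate measure
\[
\vt[\cmeasure]=\sum_{\worker}\ex\bigl[\norm{\grad\vw[\obj](\vwt[\cstate])-\grad\vw[\obj](\sol)}^2\bigr].
\]
Because $\jcstate$ is \emph{not} mixed (only $\jstate$ is), your quantity $\norm{\jstate_{\run+1}-\jcstate_{\run+1}}^2$ does not inherit a clean $(1-\nWorkersActive/\nWorkers)$ contraction: for an inactive node you must split $\state_{i,\run+1}-\cstate_{i,\run}=(\state_{i,\run+1}-\state_{i,\run})+(\state_{i,\run}-\cstate_{i,\run})$ and pay a Young factor, which inflates the leakage by roughly $\nWorkers/\nWorkersActive$. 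The paper's $\vt[\cmeasure]$ instead satisfies the exact identity $\exoft[\norm{\grad\vw[\obj](\vwtupdate[\cstate])-\grad\vw[\obj](\sol)}^2]=(1-\tfrac{\nWorkersActive}{\nWorkers})\norm{\grad\vw[\obj](\vwt[\cstate])-\grad\vw[\obj](\sol)}^2+\tfrac{\nWorkersActive}{\nWorkers}\norm{\grad\vw[\obj](\vwt[\state])-\grad\vw[\obj](\sol)}^2$, with no Young slack; this is what keeps the constants in \eqref{eq:eta-condition} manageable. Your choice can be made to work (the paper in fact uses a variant $\ex[\norm{\grad\objMulti(\jstate_\run)-\grad\objMulti(\jcstate_\run)}^2]$ in the general non-doubly-stochastic case), but expect messier bookkeeping and possibly looser constants.

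Second, the paper carries an auxiliary fifth scalar, the function-value gap $\vt[\errgap]=\ex[\obj(\bar\state_\run)-\obj(\sol)]$, which enters the recursion for $\vt[\distsol]$ with a \emph{negative} coefficient and is used to absorb the positive $\vt[\errgap]$ contributions that appear (via co-coercivity, \cref{lem:aux}) in the bounds for $\vt[\varX]$, $\vt[\varY]$, $\vt[\cmeasure]$. Concretely, the recursion is $\update[\recvec]\le\recmat\vt[\recvec]+\vt[\errgap]\recbias$ and one checks $w^\top\recbias\le 0$ under \eqref{eq:eta-condition}. If you try to close the system using only strong convexity in the descent step you will likely find the cross-terms do not cancel; the function-value trick is what makes the weighted sum actually contract.
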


\cref{thm:cvg-bistoch} indicates that a larger step-size can (and should) be taken for smaller sample size, if all the other parameters are fixed.
Intuitively, this is because at each iteration fewer gradients enter the network, and thus these gradients can be used with a larger weight.

In term of dependence on problem parameters, the linear dependence on the condition number $\lips/\str$ matches that of standard gradient descent, whereas the $1/(1-\cfactor)^2$ dependence on mixing parameter is common in the literature of gradient tracking \cite{qu2017harnessing} but has been further improved recently by \cite{koloskova2021improved} in the case of single fixed mixing matrix.
Regarding the effect of device sampling, although the complexity in terms of iterations is degraded by $\sqrt{\nWorkers/\nWorkersActive}$ compared to asynchronous Push--Pull without device sampling (\ie $\nWorkersActive=\nWorkers$), the complexity in number of computed gradients is actually improved.
To see this, we multiply \eqref{eq:n-iterations} by $\nWorkersActive$ and and verify that the resulting quantity indeed decreases when $\nWorkersActive$ gets smaller.

Nonetheless, device sampling may also affect the connectivity of the network and thus $\cfactor$ if the communication matrices are chosen according to the sampled devices $\vt[\workers]$ (for instance, in Example~\ref{ex:broadcast}).
Therefore, unlike in the centralized case, sampling with variance reduction is not always guaranteed to converge faster here. Rather, there is a communication-computation trade-off
that involves both the choice of the sampling size $\nWorkersActive$ and the mixing matrices $\vt[\mixing]$, $\vt[\mixingalt]$ (see \cref{algo:PPDS} for details).



\section{Convergence analysis}
\label{sec:anal}
In this section we outline the proofs of \cref{thm:cvg,thm:cvg-bistoch}.
To begin, let us define
\begin{equation}
    \notag
    \vwt[\vrgvec]=\vwt[\gstate]+\grad\vw[\obj](\vwt[\state])-\grad\vw[\obj](\vwt[\cstate]).
\end{equation}
as the gradient estimator of node $\worker$ at iteration $\run$ so that $\vwtinter[\gstate]=\vwt[\vrgvec]$ if and only if $\worker\in\vt[\workers]$, and $\vwtinter[\gstate]=\vwt[\gstate]$ otherwise.
With the mass preservation property of column-stochastic matrices and the definition of $\vwt[\cstate]$, we have immediately the following lemma.

\begin{lemma}
\label{lem:yt-gt-sum}
Suppose that the matrices $\seqinf[\vt[\mixingalt]]$ are column-stochastic. It holds that
\begin{equation}
    \notag
    \sumworker\vwt[\gstate]=\sumworker\grad\vw[\obj](\vwt[\cstate]), ~~
    \sumworker\vwt[\vrgvec]=\sumworker\grad\vw[\obj](\vwt[\state]).
\end{equation}
\end{lemma}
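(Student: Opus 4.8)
The plan is to establish the first identity $\sumworker\vwt[\gstate]=\sumworker\grad\vw[\obj](\vwt[\cstate])$ by induction on the iteration counter $\run$, and then to deduce the second identity in one algebraic line. The only two ingredients I will use are the mass preservation property $\ones^{\top}\vt[\mixingalt]=\ones^{\top}$ of column-stochastic matrices and the update rule for the memory variables $\vwt[\cstate]$ (third line of \eqref{eq:PPDS}).

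\emph{Base case.} At $\run=\start$, the initialization in \cref{algo:PPDS} sets $\vwt[\gstate][\worker][\start]=\grad\vw[\obj](\vwt[\state][\worker][\start])$ and $\vwt[\cstate][\worker][\start]=\vwt[\state][\worker][\start]$ for every node $\worker$, so both sides reduce to $\sumworker\grad\vw[\obj](\vwt[\state][\worker][\start])$ and the claim holds.

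\emph{Inductive step.} Assume $\sumworker\vwt[\gstate]=\sumworker\grad\vw[\obj](\vwt[\cstate])$. First I would left-multiply $\update[\jgstate]=\vt[\mixingalt]\inter[\jgstate]$ by $\ones^{\top}$ and use column-stochasticity to get $\sumworker\vwtupdate[\gstate]=\sumworker\vwtinter[\gstate]$. Reading off the first line of \eqref{eq:PPDS} componentwise with $\vt[\sampMat]=\diag(\one_{\worker\in\vt[\workers]})$ gives $\vwtinter[\gstate]=\vwt[\gstate]+\one_{\worker\in\vt[\workers]}\bigl(\grad\vw[\obj](\vwt[\state])-\grad\vw[\obj](\vwt[\cstate])\bigr)$; summing over $\worker$ and substituting the induction hypothesis yields
\begin{equation}
    \notag
    \sumworker\vwtinter[\gstate]
    = \sum_{\worker\notin\vt[\workers]}\grad\vw[\obj](\vwt[\cstate])
    + \sum_{\worker\in\vt[\workers]}\grad\vw[\obj](\vwt[\state]).
\end{equation}
Finally, the $\vwt[\cstate]$-update says $\vwtupdate[\cstate]=\vwt[\state]$ for $\worker\in\vt[\workers]$ and $\vwtupdate[\cstate]=\vwt[\cstate]$ for $\worker\notin\vt[\workers]$, so $\sumworker\grad\vw[\obj](\vwtupdate[\cstate])$ equals exactly the right-hand side above. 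Chaining the three equalities closes the induction.

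\emph{Second identity and obstacle.} From $\vwt[\vrgvec]=\vwt[\gstate]+\grad\vw[\obj](\vwt[\state])-\grad\vw[\obj](\vwt[\cstate])$ I sum over $\worker$ and cancel $\sumworker\vwt[\gstate]$ against $\sumworker\grad\vw[\obj](\vwt[\cstate])$ by the first identity, leaving $\sumworker\vwt[\vrgvec]=\sumworker\grad\vw[\obj](\vwt[\state])$. I do not expect any genuine obstacle here: the argument is a bookkeeping induction. The one point requiring care is aligning the active set $\vt[\workers]$ that defines $\vt[\sampMat]$ in the $\inter[\jgstate]$ step with the support of the $\vwt[\cstate]$-update, so that the terms indexed by active nodes switch consistently from $\grad\vw[\obj](\vwt[\cstate])$ to $\grad\vw[\obj](\vwt[\state])$ on both sides; column-stochasticity of $\vt[\mixingalt]$ then guarantees the subsequent gossip step leaves the total sum untouched.
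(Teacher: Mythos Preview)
Your proof is correct and follows exactly the approach the paper indicates: the paper does not spell out a proof but states the lemma follows ``immediately'' from the mass preservation property of column-stochastic matrices and the definition of $\vwt[\cstate]$, which are precisely the two ingredients your induction uses. The only difference is that you have written out the induction in full detail, whereas the paper leaves it implicit.
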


Therefore, if the iterates move in the direction $-\sumworker\vwt[\vrgvec]$, we can expect convergence of the algorithm.
This idea is crucial for our proof.

Another important step in the analysis is to establish that the nodes' decisions variables converge to a consensus.
For this, let us write $\vt[\avg[\state]] = \ones^{\top}\vt[\jstate]/\nWorkers$ for the average of these variables.
Similarly, we also use the notation $\vt[\avg[\gstate]] = \ones^{\top}\vt[\jgstate]/\nWorkers$.

Finally, we would like to highlight that the expectation $\ex$ is taken over the randomness induced by both sampling and communication.
We define $\seqinf[\vt[\filter]]$ as the natural filtration associated to the sequence $\seqinf[\vt[\jstate]]$ so that $((\vt[\workers][\runalt], \vt[\mixing][\runalt], \vt[\mixingalt][\runalt]))_{\start\le\runalt\le\run-1}$ is $\vt[\filter]$ measurable while $(\vt[\workers],\vt[\mixing],\vt[\mixingalt])$ is not.
For simplicity, we write $\exoft$ for the expectation conditioned on the history up to time $\run$,
\ie $\exoft[\cdot]=\exof{\cdot\given{\vt[\filter]}}=\exof{\cdot\given{((\vt[\workers][\runalt], \vt[\mixing][\runalt], \vt[\mixingalt][\runalt]))_{\start\le\runalt\le\run-1}}}$.

\subsection{Analysis with doubly stochastic matrices}
\label{subsec:anal-bistoch}

As a warm-up, we first establish the convergence of the algorithm in the simpler case where both \cref{asm:matrices-bi} and \cref{asm:sampling-uni} hold.
This allows us to highlight our proof strategy without having to deal with the additional difficulties caused by the fact of having asymmetric communications.
Following previous works that analyze gradient tracking and variance reduced methods, the essential idea of our proof is to derive a system of inequalities for the following quantities

\begin{equation}
    \label{eq:rec-variable-bistoch}
    \begin{aligned}
    \vt[\distsol] &= \ex[\norm{\vt[\avg[\state]]-\sol}^2],~~
    \vt[\errgap] = \ex[\obj(\vt[\avg[\state]]) - \obj(\sol)],~~\\
    \vt[\varX] &= \ex[\norm{\vt[\jstate]-\avgconcat}^2],~~
    \vt[\varY] = \ex[\norm{\vt[\jgstate]-\avgconcat[\gstate]}^2],\\
    \vt[\cmeasure] &= \sumworker
    \ex[\norm{\grad\vw[\obj](\vwt[\cstate])-\grad\vw[\obj](\sol)}^2].
    \end{aligned}
\end{equation}
Here, $\vt[\distsol]$ and $\vt[\errgap]$ measure the performance of the averaged iterate; $\vt[\varX]$ and $\vt[\varY]$ measure the variances of the two variables of the agents; and $\vt[\cmeasure]$ measures the quality of the control variates and is standard in the analysis of variance reduced algorithms \cite{JZ13,KM19}.
The following proposition bounds these quantities by a linear combination of their previous values.

\begin{proposition}
\label{prop:recineq-matrix}
Let $\vt[\recvec] = [\vt[\distsol], \vt[\varX], \vt[\varY], \vt[\cmeasure]]^{\top}$.
Under \cref{asm:functions,asm:independence,asm:matrices-bi,asm:sampling-uni}, we have
\begin{equation}
    \label{eq:recineq-matrix}
    \update[\recvec]\le\recmat\vt[\recvec]+\vt[\errgap]\recbias
\end{equation}
where the entries of $\recmat$ and $\recbias$ are given by
\begin{equation}
\notag
\renewcommand\arraystretch{1.8}
    \recmat =
    \begin{bmatrix}
    1-\frac{\step\str\nWorkersActive}{2\nWorkers} & 
    \frac{\step\lips\nWorkersActive}{\nWorkers^2}
    +\frac{10\step^2\lips^2\nWorkersActive^2}{\nWorkers^3} &
    \frac{2\step^2\nWorkersActive^2}{\nWorkers^3} &
    \frac{4\step^2\nWorkersActive^2}{\nWorkers^3}
    \\
    0 & \frac{1+\cfactor}{2} + \frac{20\step^2\lips^2\nWorkersActive}{\nWorkers(1-\cfactor)} &
    \frac{4\step^2\nWorkersActive}{\nWorkers(1-\cfactor)} &
    \frac{8\step^2\nWorkersActive}{\nWorkers(1-\cfactor)}
    \\
    0 & \frac{8\lips^2\nWorkersActive}{\nWorkers(1-\cfactor)} & 
    \frac{1+\cfactor}{2} &
    \frac{4\nWorkersActive}{\nWorkers(1-\cfactor)}
    \\
    0 & \frac{2\lips^2\nWorkersActive}{\nWorkers} & 0 &
    1-\frac{\nWorkersActive}{\nWorkers}
    \end{bmatrix},
\end{equation}
\begin{equation}
    \notag
    \recbias=
    \left[
    -\frac{\step\nWorkersActive}{\nWorkers}
    +\frac{20\step^2\lips\nWorkersActive^2}{\nWorkers^2},
    \frac{40\step^2\lips\nWorkersActive}{1-\cfactor},
    \frac{16\lips\nWorkersActive}{1-\cfactor},
    4\lips\nWorkersActive\right]^{\top}.
\end{equation}
\end{proposition}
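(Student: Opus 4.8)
The plan is to derive each of the four scalar inequalities encoded in the rows of $\recmat$ separately, using the recursion \eqref{eq:PPDS} together with Lemma~\ref{lem:yt-gt-sum}, and then assemble them into the stated matrix form. I would work throughout conditionally on the filtration $\vt[\filter]$, take the conditional expectation $\exoft$ first (which is where Assumption~\ref{asm:sampling-uni} makes everything clean, since $\exoft[\vt[\sampMat]] = (\nWorkersActive/\nWorkers)\Id$), and then apply the tower rule to pass to the unconditional quantities in \eqref{eq:rec-variable-bistoch}. The smoothness-plus-convexity of each $\vw[\obj]$ and the $\str$-strong convexity of $\obj$ (Assumption~\ref{asm:functions}) will be used in the standard way: the $\lips$-cocoercivity of $\grad\vw[\obj]$ to turn gradient differences into function-value gaps, and strong convexity to get the contraction factor $1-\step\str\nWorkersActive/(2\nWorkers)$ in the top-left entry.

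First I would treat the \emph{consensus-error rows} ($\vt[\varX]$ and $\vt[\varY]$). The key tool here is Lemma~\ref{lem:mixing-contraction} (invoked from the excerpt), which under Assumption~\ref{asm:matrices-bi} gives a per-step contraction by the factor $\cfactor$ of $\ex[\norm{\vt[\mixing]\vt[\jstate] - \avgconcat}^2]$ and similarly for $\vt[\mixingalt]$; splitting $\tfrac{1}{2}(1+\cfactor) = \cfactor + \tfrac{1}{2}(1-\cfactor)$ and using Young's inequality with the weight $(1-\cfactor)$ produces the $(1+\cfactor)/2$ diagonal entries and the $1/(1-\cfactor)$ factors decorating the off-diagonal terms. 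The cross terms come from expanding $\inter[\jstate] = \vt[\jstate] - \step\vt[\sampMat]\inter[\jgstate]$ and $\inter[\jgstate] = \vt[\jgstate] + \vt[\sampMat](\grad\objMulti(\vt[\jstate]) - \grad\objMulti(\vt[\jcstate]))$: the $\step^2$-weighted terms feed $\vt[\varY]$ and $\vt[\cmeasure]$ into the $\vt[\varX]$-row, the gradient-difference term $\grad\objMulti(\vt[\jstate]) - \grad\objMulti(\vt[\jcstate])$ is bounded (via $\lips$-smoothness and $\norm{a-b}^2 \le 2\norm{a-\sol}^2 + 2\norm{b-\sol}^2$, absorbing the solution terms into $\vt[\varX]$, $\vt[\errgap]$, $\vt[\cmeasure]$) to produce the $\lips^2$ coefficients in the $\vt[\varY]$-row. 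Next I would handle the \emph{control-variate row} $\vt[\cmeasure]$: by definition $\vwtupdate[\cstate] = \vwt[\state]$ on the sampled set and $\vwtupdate[\cstate] = \vwt[\cstate]$ otherwise, so under uniform sampling $\exoft[\vt[\cmeasure][\run+1]] = (1 - \nWorkersActive/\nWorkers)\vt[\cmeasure] + (\nWorkersActive/\nWorkers)\sumworker\ex[\norm{\grad\vw[\obj](\vwt[\state]) - \grad\vw[\obj](\sol)}^2]$, and the last sum is bounded by $\lips^2$-smoothness in terms of $\norm{\vwt[\state] - \vt[\avg[\state]]}^2$ and $\norm{\vt[\avg[\state]] - \sol}^2$, yielding the $2\lips^2\nWorkersActive/\nWorkers$ entry against $\vt[\varX]$, the $1-\nWorkersActive/\nWorkers$ diagonal, and the $4\lips\nWorkersActive$ entry of $\recbias$ after converting $\distsol$ into $\errgap$ via strong convexity.

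The \emph{descent row} for $\vt[\distsol]$ is where I would expect the main work. Using Lemma~\ref{lem:yt-gt-sum}, $\vt[\avg[\state]]$ evolves as $\vt[\avg[\state]][\run+1] = \vt[\avg[\state]] - (\step/\nWorkers)\ones^{\top}\vt[\sampMat]\inter[\jgstate] = \vt[\avg[\state]] - (\step/\nWorkers)\sum_{\worker\in\vt[\workers]}\vwt[\vrgvec]$, and in conditional expectation the driving direction is $(\step\nWorkersActive/\nWorkers^2)\sumworker\vwt[\vrgvec] = (\step\nWorkersActive/\nWorkers^2)\sumworker\grad\vw[\obj](\vwt[\state])$, which is $\nWorkers\step\nWorkersActive/\nWorkers^2$ times a gradient evaluated at the \emph{perturbed} points $\vwt[\state]$ rather than at $\vt[\avg[\state]]$. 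So the standard strong-convexity descent argument must be run with the error $\sumworker(\grad\vw[\obj](\vwt[\state]) - \grad\vw[\obj](\vt[\avg[\state]]))$, whose norm is controlled by $\lips^2\vt[\varX]$; this is the source of the $\step\lips\nWorkersActive/\nWorkers^2$ term. The $\step^2$ second-moment term $\exoft[\norm{\sum_{\worker\in\vt[\workers]}\vwt[\vrgvec]}^2]$ must then be expanded and bounded — this requires care with the uniform-sampling covariance of the indicators $\one_{\worker\in\vt[\workers]}$ — and each $\vwt[\vrgvec] = \vwt[\gstate] + \grad\vw[\obj](\vwt[\state]) - \grad\vw[\obj](\vwt[\cstate])$ bounded in terms of $\vt[\varX]$, $\vt[\varY]$, $\vt[\cmeasure]$ and (via strong convexity, to pull out the $\recbias$ contribution $-\step\nWorkersActive/\nWorkers + 20\step^2\lips\nWorkersActive^2/\nWorkers^2$) the gap $\vt[\errgap]$; this bookkeeping, keeping every constant tight enough to land exactly on the tabulated coefficients, is the tedious but routine core of the argument. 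Finally I would collect the four inequalities, read off $\recmat$ and $\recbias$, and note that all entries are nonnegative (for the $\recbias$ entries this is obvious except the first, which is negative and correctly recorded as such), so the componentwise inequality \eqref{eq:recineq-matrix} is exactly the system just derived.
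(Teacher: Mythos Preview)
Your overall strategy matches the paper's proof closely: the paper likewise derives the four rows separately, using \cref{lem:mixing-contraction} together with Young's inequality (with parameter $\youngdelta=(1-\cfactor)/(2\cfactor)$, yielding exactly the $(1+\cfactor)/2$ and $1/(1-\cfactor)$ factors you anticipate) for the $\vt[\varX]$ and $\vt[\varY]$ rows, the sampling rule for the $\vt[\cmeasure]$ row, and the descent expansion of $\norm{\vt[\avg[\state]][\run+1]-\sol}^2$ for the $\vt[\distsol]$ row. The auxiliary bounds you sketch are packaged in the paper as \cref{lem:yt-gt} and \cref{lem:aux}; in particular, the second-moment term $\exoft[\norm{\ones^\top\vt[\sampMat]\inter[\jgstate]}^2]$ is handled there by the crude bound $\norm{\sum_{\worker\in\vt[\workers]}\vwt[\vrgvec]}^2\le\nWorkersActive\sum_{\worker\in\vt[\workers]}\norm{\vwt[\vrgvec]}^2$ followed by expectation, so no covariance computation is actually needed.

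One small slip to flag: in the $\vt[\cmeasure]$ row you propose bounding $\sumworker\norm{\grad\vw[\obj](\vt[\avg[\state]])-\grad\vw[\obj](\sol)}^2$ by $\lips^2\vt[\distsol]$ and then ``converting $\vt[\distsol]$ into $\vt[\errgap]$ via strong convexity.'' That route would introduce a spurious $1/\str$ factor and would not reproduce the stated $4\lips\nWorkersActive$ coefficient in $\recbias$. The paper instead uses the cocoercivity inequality you mention earlier, $\norm{\grad\vw[\obj](\point)-\grad\vw[\obj](\sol)}^2\le 2\lips(\vw[\obj](\point)-\vw[\obj](\sol)-\product{\point-\sol}{\grad\vw[\obj](\sol)})$, summed over $\worker$ (using $\grad\obj(\sol)=0$) to land directly on $4\nWorkers\lips\,\vt[\errgap]$; this is the content of \cref{lem:aux}a. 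With that correction, your plan is complete and essentially identical to the paper's.
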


To prove \cref{prop:recineq-matrix}, we start by presenting a series of technical lemmas that are useful for this purpose.
First, in order to deal with device sampling, we observe that $\vt[\jvrgvec]=[\vwt[\vrgvec][1],\ldots,\vwt[\vrgvec][\nWorkers]]^{\top}$ plays an important role since $\vt[\sampMat]\inter[\jgstate]=\vt[\sampMat]\vt[\jvrgvec]$.
With the uniform sampling of \cref{asm:sampling-uni}, we obtain the following lemma.

\begin{lemma}
\label{lem:yt-gt}
Let \cref{asm:independence,asm:sampling-uni} hold. Then
\begin{flalign*}
    ~~a)~ & \exoft[\ones^{\top}\vt[\sampMat]\inter[\jgstate]]
    =\frac{\nWorkersActive}{\nWorkers}\sumworker\vwt[\vrgvec].&\\
    ~~b)~ & \exoft[\norm{\ones^{\top}\vt[\sampMat]\inter[\jgstate]}^2]
    \le\frac{\nWorkersActive^2}{\nWorkers}\sumworker\norm{\vwt[\vrgvec]}^2.&
\end{flalign*}
\end{lemma}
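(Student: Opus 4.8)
The plan is to reduce both statements to a computation over the gradient--estimator matrix $\vt[\jvrgvec]=[\vwt[\vrgvec][1],\ldots,\vwt[\vrgvec][\nWorkers]]^{\top}$. First I would record the algebraic identity $\vt[\sampMat]\inter[\jgstate]=\vt[\sampMat]\vt[\jvrgvec]$: since $\vt[\sampMat]$ is a $0/1$ diagonal matrix, $\vt[\sampMat]^2=\vt[\sampMat]$, so the first line of \eqref{eq:PPDS} gives $\vt[\sampMat]\inter[\jgstate]=\vt[\sampMat]\vt[\jgstate]+\vt[\sampMat](\grad\objMulti(\vt[\jstate])-\grad\objMulti(\vt[\jcstate]))=\vt[\sampMat]\vt[\jvrgvec]$, hence $\ones^{\top}\vt[\sampMat]\inter[\jgstate]=\sum_{\worker\in\vt[\workers]}\vwt[\vrgvec]$. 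The point of conditioning on $\vt[\filter]$ is that $\vt[\jvrgvec]$ is $\vt[\filter]$-measurable --- it is a deterministic function of $\vt[\jstate]$, $\vt[\jgstate]$, $\vt[\jcstate]$, and the latter two are determined by the history up to time $\run$ --- so under $\exoft$ the vectors $\vwt[\vrgvec]$ are frozen and all randomness sits in the active set $\vt[\workers]$.

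For part a), I would write $\exoft[\ones^{\top}\vt[\sampMat]\inter[\jgstate]]=\sumworker\prob(\worker\in\vt[\workers]\mid\vt[\filter])\,\vwt[\vrgvec]$ and invoke \cref{asm:sampling-uni}: under uniform sampling of a subset of fixed size $\nWorkersActive$, each marginal inclusion probability equals $\binom{\nWorkers-1}{\nWorkersActive-1}/\binom{\nWorkers}{\nWorkersActive}=\nWorkersActive/\nWorkers$, which yields the claim directly. For part b), I would expand $\norm{\sum_{\worker\in\vt[\workers]}\vwt[\vrgvec]}^2=\sum_{\worker,\workeralt\in\vt[\workers]}\langle\vwt[\vrgvec],\vwt[\vrgvec][\workeralt]\rangle$, take $\exoft$, and plug in the one- and two-point inclusion probabilities of the uniform design, namely $\nWorkersActive/\nWorkers$ on the diagonal and $\nWorkersActive(\nWorkersActive-1)/(\nWorkers(\nWorkers-1))$ for $\worker\neq\workeralt$. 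Using $\sum_{\worker\neq\workeralt}\langle\vwt[\vrgvec],\vwt[\vrgvec][\workeralt]\rangle=\norm{\sumworker\vwt[\vrgvec]}^2-\sumworker\norm{\vwt[\vrgvec]}^2$ and collecting terms, the coefficient multiplying $\sumworker\norm{\vwt[\vrgvec]}^2$ becomes $\frac{\nWorkersActive}{\nWorkers}-\frac{\nWorkersActive(\nWorkersActive-1)}{\nWorkers(\nWorkers-1)}=\frac{\nWorkersActive(\nWorkers-\nWorkersActive)}{\nWorkers(\nWorkers-1)}\ge 0$; since this is nonnegative I may legitimately bound $\norm{\sumworker\vwt[\vrgvec]}^2\le\nWorkers\sumworker\norm{\vwt[\vrgvec]}^2$ by Cauchy--Schwarz, and a one-line simplification of $\frac{\nWorkersActive(\nWorkers-\nWorkersActive)}{\nWorkers(\nWorkers-1)}+\frac{\nWorkers\,\nWorkersActive(\nWorkersActive-1)}{\nWorkers(\nWorkers-1)}$ collapses the factor to exactly $\nWorkersActive^2/\nWorkers$.

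This is essentially a bookkeeping argument and I do not expect a genuine obstacle; the only points that need care are (i) stating clearly why $\vt[\jvrgvec]$, and hence each $\vwt[\vrgvec]$, is $\vt[\filter]$-measurable, since the whole computation rests on conditioning it out; (ii) getting the two-point probability of the fixed-size uniform design right; and (iii) checking the sign of the coefficient of $\sumworker\norm{\vwt[\vrgvec]}^2$ before replacing $\norm{\sumworker\vwt[\vrgvec]}^2$ by its upper bound, so that the inequality goes in the right direction. It is worth noting in passing that the resulting bound is tight (attained when all $\vwt[\vrgvec]$ coincide), which is why no slack is lost for the subsequent analysis.
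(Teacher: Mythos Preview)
Your proof is correct. Part a) matches the paper's argument essentially verbatim. For part b) you take a slightly different route: you compute the conditional second moment \emph{exactly} using the one- and two-point inclusion probabilities of the uniform design, obtain the identity
\[
\exoft\Big[\Big\|\sum_{\worker\in\vt[\workers]}\vwt[\vrgvec]\Big\|^2\Big]
=\frac{\nWorkersActive(\nWorkers-\nWorkersActive)}{\nWorkers(\nWorkers-1)}\sumworker\norm{\vwt[\vrgvec]}^2
+\frac{\nWorkersActive(\nWorkersActive-1)}{\nWorkers(\nWorkers-1)}\Big\|\sumworker\vwt[\vrgvec]\Big\|^2,
\]
and only then apply Cauchy--Schwarz to the second term. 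The paper instead applies Cauchy--Schwarz \emph{pointwise before} taking expectation, bounding $\|\sum_{\worker\in\vt[\workers]}\vwt[\vrgvec]\|^2\le\nWorkersActive\sum_{\worker\in\vt[\workers]}\norm{\vwt[\vrgvec]}^2$ and then averaging; this collapses the argument to two lines. Both routes land on the same bound $\nWorkersActive^2/\nWorkers\cdot\sumworker\norm{\vwt[\vrgvec]}^2$; yours is longer but exposes the exact variance formula (and makes the tightness claim immediate), while the paper's is quicker and avoids the pairwise-probability bookkeeping altogether.
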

\begin{proof}
\textit{a)}
Note that $\ones^{\top}\vt[\sampMat]\inter[\jgstate]
    =\sum_{\worker\in\vt[\workers]}\vwt[\gstate]
    =\sum_{\worker\in\vt[\workers]}\vwt[\vrgvec]$.
Therefore, 
    \begin{equation}
        \notag
        \begin{aligned}
        \exoft[\ones^{\top}\vt[\sampMat]\inter[\jgstate]]
        &=\exoft\left[\sum_{\worker\in\vt[\workers]}\vwt[\vrgvec]\right]
        =\exoft\left[\sumworker \one_{\worker\in\vt[\workers]}\vwt[\vrgvec]\right]\\
        &=\sumworker\vwt[\vrgvec]\exoft[\one_{\worker\in\vt[\workers]}]
        =\frac{\nWorkersActive}{\nWorkers}\sumworker\vwt[\vrgvec].
        \end{aligned}
    \end{equation}
We can put $\vwt[\gvec]$ outside the expectation since it is $\vt[\filter]$-measurable.

\vskip 3pt
\textit{b)}
Similarly, we have
\begin{align*}
    \exoft[\norm{\ones^{\top}\vt[\sampMat]\inter[\jgstate]}^2]
    &=\exoft\left[\left\|\sum_{\worker\in\vt[\workers]}\vwt[\vrgvec]\right\|^2\right]
    \le\exoft\left[\nWorkersActive\sum_{\worker\in\vt[\workers]}\norm{\vwt[\vrgvec]}^2\right]\\
    &=\nWorkersActive\sumworker\exoft[\one_{\worker\in\vt[\workers]}\norm{\vwt[\vrgvec]}^2]
    =\frac{\nWorkersActive^2}{\nWorkers}\sumworker\norm{\vwt[\vrgvec]}^2.
\end{align*}
\end{proof}

To control the distance to consensus, we use the lemma below that shows a contraction property of the mixing matrices.

\begin{lemma}
\label{lem:mixing-contraction}
Let \cref{asm:independence,asm:matrices-bi} hold. Then
\begin{flalign*}
    ~~a)~ & \exoft[\norm{\vt[\mixing]\vt[\jstate]-\avgconcat}^2]
    \le \cfactor \norm{\vt[\jstate]-\avgconcat}^2.&\\
    ~~b)~ & \exoft[\norm{\vt[\mixingalt]\vt[\jgstate]-\avgconcat[\gstate]}^2]
    \le \cfactor \norm{\vt[\jgstate]-\avgconcat[\gstate]}^2.&
\end{flalign*}
\end{lemma}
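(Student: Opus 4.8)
The plan is to exploit the double stochasticity of $\vt[\mixing]$ to move the mixing matrix so that it acts only on the consensus-free component $(\Id-\avgMat)\vt[\jstate]$ of the iterate, and then to recognize the conditional expectation of the resulting squared norm as the quadratic form that defines $\cfactor$.

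First I would record the two projection identities implied by double stochasticity: row-stochasticity $\vt[\mixing]\ones=\ones$ gives $\vt[\mixing]\avgMat=\avgMat$, and column-stochasticity $\ones^{\top}\vt[\mixing]=\ones^{\top}$ gives $\avgMat\vt[\mixing]=\avgMat$. Recalling that $\avgconcat=\avgMat\vt[\jstate]$ and that $\avgMat$ is idempotent, these identities yield
\[
\vt[\mixing]\vt[\jstate]-\avgconcat=(\Id-\avgMat)\vt[\mixing]\vt[\jstate]=(\Id-\avgMat)\vt[\mixing](\Id-\avgMat)\vt[\jstate],
\]
where the first equality uses $\avgMat\vt[\mixing]\vt[\jstate]=\avgMat\vt[\jstate]$ and the second uses $(\Id-\avgMat)\vt[\mixing]\avgMat\vt[\jstate]=(\Id-\avgMat)\avgMat\vt[\jstate]=0$.

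Next I would set $W=(\Id-\avgMat)\vt[\jstate]$, which is $\vt[\filter]$-measurable, and argue column by column: for a column $w$ of $W$ we have $\norm{(\Id-\avgMat)\vt[\mixing] w}^2=w^{\top}\vt[\mixing]^{\top}(\Id-\avgMat)\vt[\mixing] w$ since $\Id-\avgMat$ is a symmetric idempotent. By \cref{asm:independence}, $\vt[\mixing]$ is independent of $\vt[\filter]$ and has the law of $\vt[\mixing][\start]$, so taking $\exoft$ and pulling the $\vt[\filter]$-measurable $w$ out replaces the inner matrix by $M:=\ex[\vt[\mixing^{\top}][\start](\Id-\avgMat)\vt[\mixing][\start]]$. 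This $M$ is symmetric positive semidefinite, so $w^{\top}Mw\le\sradius(M)\,\norm{w}^2\le\cfactor\,\norm{w}^2$ by \cref{asm:matrices-bi}. Summing over the columns and using $\sum_w\norm{w}^2=\norm{W}^2=\norm{\vt[\jstate]-\avgconcat}^2$ gives a). For b) I would repeat the argument verbatim with $(\vt[\mixing],\vt[\jstate])$ replaced by $(\vt[\mixingalt],\vt[\jgstate])$; this is legitimate because under \cref{asm:matrices-bi} the matrices $\vt[\mixingalt]$ are doubly stochastic as well and $\avgconcat[\gstate]=\avgMat\vt[\jgstate]$.

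There is essentially no obstacle here. The only points that need (minor) care are that it is precisely double stochasticity that lets the mixing matrix be sandwiched between two copies of $\Id-\avgMat$ — which is exactly why this warm-up lemma is restricted to \cref{asm:matrices-bi} — and the passage from the matrix-valued iterate to a scalar quadratic form, handled by the column-wise reduction so that ``spectral radius $=$ largest eigenvalue'' can be invoked for the symmetric positive semidefinite matrix $M$.
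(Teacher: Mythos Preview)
Your proof is correct and follows essentially the same approach as the paper: both use double stochasticity to rewrite $\vt[\mixing]\vt[\jstate]-\avgconcat$ as $(\Id-\avgMat)\vt[\mixing](\vt[\jstate]-\avgconcat)$, take conditional expectation of the squared norm, invoke \cref{asm:independence} to replace the inner matrix by $\ex[\vt[\mixing^{\top}][\start](\Id-\avgMat)\vt[\mixing][\start]]$, and bound by its spectral radius. The only cosmetic difference is that the paper writes the Frobenius norm via a trace while you expand it as a sum over columns.
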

\begin{proof}
Since $\vt[\mixing]$ is doubly stochastic, we can write
\begin{equation}
    \notag
    \begin{aligned}
        &\exoft[\norm{\vt[\mixing]\vt[\jstate]-\avgconcat}^2]\\
        &~=\exoft[\norm{(\Id-\avgMat)\vt[\mixing](\vt[\jstate]-\avgconcat)}^2]\\
        &~=\exoft[\tr[(\vt[\jstate]-\avgconcat)^{\top}\vt[\mixing]^\top(\Id-\avgMat)^2\vt[\mixing](\vt[\jstate]-\avgconcat)]]\\
        &~=\tr[(\vt[\jstate]-\avgconcat)^{\top}\exoft[\vt[\mixing]^\top(\Id-\avgMat)\vt[\mixing]](\vt[\jstate]-\avgconcat)]\\
        &~\le\sradius(\exoft[\vt[\mixing]^\top(\Id-\avgMat)\vt[\mixing]])\norm{\vt[\jstate]-\avgconcat}^2.
    \end{aligned}
\end{equation}
Under \cref{asm:independence} we have $\exoft[\vt[\mixing]^\top(\Id-\avgMat)\vt[\mixing]]=\ex[\vt[\mixing^{\top}][\start](\Id-\avgMat)\vt[\mixing][\start]]$
and \textit{a)} follows immediately given that $\sradius(\ex[\vt[\mixing^{\top}][\start](\Id-\avgMat)\vt[\mixing][\start]])\le\cfactor$.
Property \textit{b)} is proved in the same way.
\end{proof}

Finally, we can use the smoothness of the objective functions and the optimality conditions to bound the expected squared norm of $\vt[\jvrgvec]$ and gradients differences by the quantities introduced in \eqref{eq:rec-variable-bistoch}.

\begin{lemma}
\label{lem:aux}
Let \cref{asm:functions} hold and $\seqinf[\vt[\mixingalt]]$ be column-stochastic. We have
\begin{flalign*}
    ~~a)~ & \ex[\norm{\grad\objMulti(\vt[\jstate])-\grad\objMulti(\ones^\top \sol)}^2]
    \le 2\lips^2\vt[\varX]+4\nWorkers\lips\vt[\errgap].&\\
    ~~b)~ & \ex[\norm{\grad\objMulti(\vt[\jstate])-\grad\objMulti(\vt[\jcstate])}^2]
    \le 4\lips^2\vt[\varX]+8\nWorkers\lips\vt[\errgap]+2\vt[\cmeasure].&\\
    ~~c)~ & \ex[\norm{\vt[\jvrgvec]}^2]
    \le 10\lips^2\vt[\varX]+20\nWorkers\lips\vt[\errgap]+4\vt[\cmeasure]+2\vt[\varY].&
\end{flalign*}
\end{lemma}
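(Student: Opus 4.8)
The plan is to prove the three bounds in sequence, with (a) feeding into (b) and both feeding into (c). Throughout, the key facts I will use are: $\lips$-smoothness of each $\vw[\obj]$ (hence co-coercivity and the quadratic upper bound), convexity of each $\vw[\obj]$, the first-order optimality condition $\sumworker\grad\vw[\obj](\sol)=\nWorkers\grad\obj(\sol)=0$, and the definitions $\vt[\varX]=\ex[\norm{\vt[\jstate]-\avgconcat}^2]$, $\vt[\errgap]=\ex[\obj(\vt[\avg[\state]])-\obj(\sol)]$, $\vt[\cmeasure]=\sumworker\ex[\norm{\grad\vw[\obj](\vwt[\cstate])-\grad\vw[\obj](\sol)}^2]$. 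Note that $\norm{\grad\objMulti(\vt[\jstate])-\grad\objMulti(\ones^\top\sol)}^2=\sumworker\norm{\grad\vw[\obj](\vwt[\state])-\grad\vw[\obj](\sol)}^2$, so everything reduces to per-node estimates that are then summed.

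For part (a), I would split $\vwt[\state]-\sol=(\vwt[\state]-\vt[\avg[\state]])+(\vt[\avg[\state]]-\sol)$ inside the gradient difference and use $\norm{u+v}^2\le 2\norm{u}^2+2\norm{v}^2$ after applying $\lips$-smoothness, or more efficiently bound $\norm{\grad\vw[\obj](\vwt[\state])-\grad\vw[\obj](\sol)}^2\le 2\norm{\grad\vw[\obj](\vwt[\state])-\grad\vw[\obj](\vt[\avg[\state]])}^2+2\norm{\grad\vw[\obj](\vt[\avg[\state]])-\grad\vw[\obj](\sol)}^2$; the first term is $\le 2\lips^2\norm{\vwt[\state]-\vt[\avg[\state]]}^2$ by smoothness, and summing gives $2\lips^2\vt[\varX]$. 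For the second term, summing over $\worker$ and using co-coercivity of each $\vw[\obj]$ together with the optimality condition $\sumworker\grad\vw[\obj](\sol)=0$ yields $\sumworker\norm{\grad\vw[\obj](\vt[\avg[\state]])-\grad\vw[\obj](\sol)}^2\le 2\lips\sumworker(\vw[\obj](\vt[\avg[\state]])-\vw[\obj](\sol)-\inner{\grad\vw[\obj](\sol)}{\vt[\avg[\state]]-\sol})=2\lips\nWorkers(\obj(\vt[\avg[\state]])-\obj(\sol))$, so taking expectations gives $4\nWorkers\lips\vt[\errgap]$ after the factor $2$ is distributed. Part (b) is then immediate: write $\grad\vw[\obj](\vwt[\state])-\grad\vw[\obj](\vwt[\cstate])=(\grad\vw[\obj](\vwt[\state])-\grad\vw[\obj](\sol))-(\grad\vw[\obj](\vwt[\cstate])-\grad\vw[\obj](\sol))$, apply $\norm{u-v}^2\le 2\norm{u}^2+2\norm{v}^2$, sum, take expectations, and plug in (a) for the first piece and the definition of $\vt[\cmeasure]$ for the second, producing $2(2\lips^2\vt[\varX]+4\nWorkers\lips\vt[\errgap])+2\vt[\cmeasure]$.

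For part (c), recall $\vwt[\vrgvec]=\vwt[\gstate]+\grad\vw[\obj](\vwt[\state])-\grad\vw[\obj](\vwt[\cstate])$, so $\vt[\jvrgvec]=\vt[\jgstate]+\grad\objMulti(\vt[\jstate])-\grad\objMulti(\vt[\jcstate])$. By Lemma \ref{lem:yt-gt-sum}, $\ones^\top\vt[\jgstate]=\sumworker\grad\vw[\obj](\vwt[\cstate])=\ones^\top\grad\objMulti(\vt[\jcstate])$, which means $\vt[\jgstate]-\avgconcat[\gstate]$ and $\grad\objMulti(\vt[\jstate])-\grad\objMulti(\vt[\jcstate])$ combine cleanly: I would write $\vt[\jvrgvec]=(\vt[\jgstate]-\avgconcat[\gstate])+(\grad\objMulti(\vt[\jstate])-\grad\objMulti(\vt[\jcstate]))+\avgconcat[\gstate]$ and observe $\avgconcat[\gstate]=\avgMat\grad\objMulti(\vt[\jcstate])$. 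Hence $\vt[\jvrgvec]=(\vt[\jgstate]-\avgconcat[\gstate])+(\Id-\avgMat)(\grad\objMulti(\vt[\jstate])-\grad\objMulti(\vt[\jcstate]))+\avgMat\grad\objMulti(\vt[\jstate])$. The middle term is controlled by (b) since $\norm{(\Id-\avgMat)v}\le\norm{v}$, and the last term $\avgMat\grad\objMulti(\vt[\jstate])$ has squared norm $\tfrac1\nWorkers\norm{\sumworker\grad\vw[\obj](\vwt[\state])}^2=\tfrac1\nWorkers\norm{\sumworker(\grad\vw[\obj](\vwt[\state])-\grad\vw[\obj](\sol))}^2\le\sumworker\norm{\grad\vw[\obj](\vwt[\state])-\grad\vw[\obj](\sol)}^2$, which is exactly what (a) bounds. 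Applying $\norm{u+v+w}^2\le$ an appropriate weighted sum (e.g. using $2\norm{u}^2+2\norm{v+w}^2$ then splitting $v+w$), taking expectations and substituting (a) and (b), and bookkeeping the constants gives $2\vt[\varY]+4\cdot(4\lips^2\vt[\varX]+8\nWorkers\lips\vt[\errgap]+2\vt[\cmeasure])/\!\ldots$; matching to the stated $10\lips^2\vt[\varX]+20\nWorkers\lips\vt[\errgap]+4\vt[\cmeasure]+2\vt[\varY]$ just requires choosing the splitting so the coefficients line up (a $2$--$2$ split giving $2\varY$ and $2$ times (b) for $\varX,\errgap,\cmeasure$ terms plus (a) again, absorbing $2\lips^2\le\text{const}$).

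The routine parts are the Cauchy--Schwarz/Young splittings and constant tracking; the only place requiring a little care is part (c), where one must use Lemma \ref{lem:yt-gt-sum} to recognize that $\vt[\jgstate]-\avgconcat[\gstate]$ plus the projected gradient-difference term reassembles $\vt[\jvrgvec]$ up to the $\avgMat\grad\objMulti(\vt[\jstate])$ piece — missing this decomposition would force a cruder bound with a $\vt[\cmeasure]$ term appearing with the wrong (too large) constant or an extra dependence. So the main obstacle is choosing the right additive decomposition of $\vt[\jvrgvec]$ so that the three resulting terms are each separately bounded by $\vt[\varY]$, by (b), and by (a) respectively, with constants adding up to exactly those claimed.
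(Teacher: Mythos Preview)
Your plan for parts (a) and (b) is exactly the paper's proof: split through $\vt[\avg[\state]]$ (resp.\ through $\sol$), use $\lips$-Lipschitzness on the consensus part and co-coercivity plus $\grad\obj(\sol)=0$ on the remainder.

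For part (c), your three-term decomposition
\[
\vt[\jvrgvec]=(\vt[\jgstate]-\avgconcat[\gstate])+(\Id-\avgMat)(\grad\objMulti(\vt[\jstate])-\grad\objMulti(\vt[\jcstate]))+\avgMat\grad\objMulti(\vt[\jstate])
\]
is correct and is in fact the same as the paper's, but you are hand-waving the constant tracking. The point you are missing is that the first two terms lie in $\mathrm{range}(\Id-\avgMat)$ while the third lies in $\mathrm{range}(\avgMat)$, so they are \emph{orthogonal}. The paper exploits this by first writing the exact Pythagorean identity
\[
\norm{\vt[\jvrgvec]}^2=\norm{(\Id-\avgMat)\vt[\jvrgvec]}^2+\norm{\avgMat\vt[\jvrgvec]}^2,
\]
then applying Young's inequality only to the first (two-term) piece, and using $\avgMat\vt[\jvrgvec]=\avgMat\grad\objMulti(\vt[\jstate])$ from \cref{lem:yt-gt-sum} for the second. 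This yields
\(
2\vt[\varY]+2\,[\text{bound (b)}]+[\text{bound (a)}]
=10\lips^2\vt[\varX]+20\nWorkers\lips\vt[\errgap]+4\vt[\cmeasure]+2\vt[\varY]
\)
with no slack. Your suggested ``$2$--$2$ split then split again'' on a generic three-term sum would give coefficients $(2,4,4)$, not $(2,2,1)$, and would overshoot the stated constants; the orthogonality is what makes the coefficient on the $\avgMat$-piece equal to $1$. Once you state that explicitly, your argument coincides with the paper's.
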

\begin{proof}
See \cref{app:proof-aux}.
\end{proof}

We are now ready to prove \cref{prop:recineq-matrix} by leveraging the above lemmas.

\begin{proof}[Proof of \cref{prop:recineq-matrix}]
Below we bound the four quantities in question respectively.

\vskip 3pt
\textbf{Bounding $\update[\distsol]$.}
We develop
\begin{align}
    \notag
    \norm{\update[\avg[\state]]-\sol}^2
    &= \norm{\vt[\avg[\state]]
        -\frac{\step}{\nWorkers}\ones^{\top}\vt[\sampMat]\inter[\jgstate]-\sol}^2 \\
    \notag
    &= \norm{\vt[\avg[\state]]-\sol}^2
    - \frac{2\step}{\nWorkers} 
    \product{\vt[\avg[\state]]-\sol}{\ones^{\top}\vt[\sampMat]\inter[\jgstate]}\\
    & ~ + \frac{\step^2}{\nWorkers^2}\norm{\ones^{\top}\vt[\sampMat]\inter[\jgstate]}^2.
    \label{eq:dt-develop}
\end{align}
Using \cref{lem:yt-gt-sum,lem:yt-gt} and \cref{asm:functions}, we get
\begin{small}
\begin{equation}
\label{eq:dt-salar-product}
\begin{aligned}[b]
&\exoft[\product{\vt[\avg[\state]]-\sol}{\ones^{\top}\vt[\sampMat]\inter[\jgstate]}]\\
&~=\product{\vt[\avg[\state]]-\sol}
{\frac{\nWorkersActive}{\nWorkers}\sumworker\grad\vw[\obj](\vwt[\state])} \\
&~= \frac{\nWorkersActive}{\nWorkers}
\sumworker
(\product{\vt[\avg[\state]]-\vwt[\state]}{\grad\vw[\obj](\vwt[\state])}
+\product{\vwt[\state]-\sol}{\grad\vw[\obj](\vwt[\state])}) \\
&~\ge \frac{\nWorkersActive}{\nWorkers}
\sumworker
(\vw[\obj](\vt[\avg[\state]])-\vw[\obj](\vwt[\state])
-\frac{\lips}{2}\norm{\vwt[\state]-\vt[\avg[\state]]}^2
+ \vw[\obj](\vwt[\state])-\vw[\obj](\sol)) \\
&~=\nWorkersActive(\obj(\vt[\avg[\state]])-\obj(\sol))
-\frac{\lips\nWorkersActive}{2\nWorkers}\norm{\vt[\jstate]-\avgconcat}^2 \\
&~\ge \frac{\nWorkersActive}{2}(\obj(\vt[\avg[\state]])-\obj(\sol))
+\frac{\str\nWorkersActive}{4}\norm{\vt[\avg[\state]]-\sol}^2
-\frac{\lips\nWorkersActive}{2\nWorkers}\norm{\vt[\jstate]-\avgconcat}^2.
\end{aligned}
\end{equation}
\end{small}
In the last line we have used the fact that $\obj(\point)-\obj(\sol)\ge(\str/2)\norm{\point-\sol}^2$ for every $\point\in\vecspace$ since $\obj$ is strongly convex.
As for the last term of \eqref{eq:dt-develop}, we resort to \cref{lem:yt-gt}b and \cref{lem:aux}c. This gives
\begin{equation}
    \notag
    \ex[\norm{\ones^{\top}\vt[\sampMat]\inter[\jgstate]}^2]
    \le \frac{\nWorkersActive^2}{\nWorkers}
    (10\lips^2\vt[\varX]+20\nWorkers\lips\vt[\errgap]+4\vt[\cmeasure]+2\vt[\varY]).
\end{equation}
Combining the above inequalities we get
\begin{equation}
    \notag
    \begin{aligned}
    \update[\distsol]
    &\le
    \left(1-\frac{\step\str\nWorkersActive}{2\nWorkers}\right)\vt[\distsol]
    + \left(\frac{\step\lips\nWorkersActive}{\nWorkers^2}
    + \frac{10\step^2\lips^2\nWorkersActive^2}{\nWorkers^3}\right)\vt[\varX]\\
    &~+ \frac{2\step^2\nWorkersActive^2}{\nWorkers^3}\vt[\varY]
    + \frac{4\step^2\nWorkersActive^2}{\nWorkers^3}\vt[\cmeasure]
    - \left(\frac{\step\nWorkersActive}{\nWorkers}
    - \frac{20\step^2\lips\nWorkersActive^2}{\nWorkers^2}\right)\vt[\errgap].
    \end{aligned}
\end{equation}

\vskip 3pt
\textbf{Bounding $\update[\varX]$.}
In the inequality $\norm{a+b}^2\le(1+\youngdelta)\norm{a}^2+(1+1/\youngdelta)\norm{b}^2$, choosing $\youngdelta=(1-\cfactor)/2\cfactor$ gives\footnote{Without loss of generality we assume $\cfactor>0$. Otherwise the first term in the inequalities are always $0$ and we can simply take $\youngdelta=1$. The same remark applies to the analysis in \cref{subsec:anal-gen}.}
\begin{equation}
    \label{eq:Young-mixing}
    \norm{a+b}^2\le\frac{1+\cfactor}{2\cfactor}\norm{a}^2+\frac{1+\cfactor}{1-\cfactor}\norm{b}^2.
\end{equation}

Since $\vt[\mixing]$ is doubly stochastic and hence column-stochastic, it holds $\avgMat\vt[\mixing]=\avgMat$. We then have,
\begin{align}
    \notag
    &\exoft[\norm{\update[\jstate]-\avgconcat[\state][{\run+1}]}^2]\\
    \notag
    &~= \exoft[\norm{\vt[\mixing]\vt[\jstate]
    - \step\vt[\mixing]\vt[\sampMat]\inter[\jgstate]
    -(\avgconcat-\step\avgMat\vt[\sampMat]\inter[\jgstate])}^2]\\
    \notag
    &~\le
    \frac{1+\cfactor}{2\cfactor}
    \exoft[\norm{\vt[\mixing]\vt[\jstate]-\avgconcat}^2]\\
    &~~+\frac{1+\cfactor}{1-\cfactor}\step^2
    \exoft[\norm{\vt[\mixing]\vt[\sampMat]\inter[\jgstate]
    -\avgMat\vt[\sampMat]\inter[\jgstate]}^2].
    \label{eq:varX-develop}
\end{align}
Using \cref{lem:mixing-contraction}a the first term can be bounded by $(1+\cfactor)\norm{\vt[\jstate]-\avgconcat}^2/2$.
The same does not apply to the second term as $\vt[\mixing]$ and $\vt[\sampMat]$ are not independent. Nonetheless, with the bistochasticity of $\vt[\mixing]$, we can still write
\begin{align*}
    \exoft[\norm{\vt[\mixing]\vt[\sampMat]\inter[\jgstate]
    -\avgMat\vt[\sampMat]\inter[\jgstate]}^2]
    &\le
    \exoft[\norm{\vt[\sampMat]\inter[\jgstate]}^2]\\
    &=\frac{\nWorkersActive}{\nWorkers}\sumworker\norm{\vwt[\vrgvec]}^2.
\end{align*}
With \cref{lem:aux}c, taking total expectation in \eqref{eq:varX-develop} then gives
\begin{small}
\begin{equation}
    \notag
    \begin{aligned}
    \update[\varX]
    &\le
    \frac{1+\cfactor}{2}\vt[\varX]
    + \frac{1+\cfactor}{1-\cfactor}\frac{\step^2\nWorkersActive}{\nWorkers}
    (10\lips^2\vt[\varX]+20\nWorkers\lips\vt[\errgap]+4\vt[\cmeasure]+2\vt[\varY])
    \\
    &\le
    \left(\frac{1+\cfactor}{2}
    + \frac{20\step^2\lips^2\nWorkersActive}{\nWorkers(1-\cfactor)}\right)\vt[\varX]
    + \frac{4\step^2\nWorkersActive}{\nWorkers(1-\cfactor)}\vt[\varY]\\
    &~+ \frac{8\step^2\nWorkersActive}{\nWorkers(1-\cfactor)}\vt[\cmeasure]
    + \frac{40\step^2\lips\nWorkersActive}{1-\cfactor}\vt[\errgap].
    \end{aligned}
\end{equation}
\end{small}

\vskip 3pt
\textbf{Bounding $\update[\varY]$.}
Similar to the above, using \eqref{eq:Young-mixing} and the bistochasticity of $\vt[\mixingalt]$, we obtain
\begin{small}
\begin{align}
    \notag
    \norm{\update[\jgstate]-\avgconcat[\gstate][\run+1]}^2
    &= \norm{\vt[\mixingalt]\vt[\jgstate]
    -\vt[\mixingalt]\vt[\sampMat](\grad\objMulti(\vt[\jstate])-\grad\objMulti(\vt[\jcstate]))\\
    \notag
    &~~~~-(\avgconcat[\gstate]-
    \avgMat\vt[\sampMat](\grad\objMulti(\vt[\jstate])-\grad\objMulti(\vt[\jcstate])))
    }^2\\
    \notag
    &\le
    \frac{1+\cfactor}{2\cfactor}
    \norm{\vt[\mixingalt]\vt[\jgstate]-\avgconcat[\gstate]}^2\\
    &~+\frac{1+\cfactor}{1-\cfactor}
    \norm{\vt[\sampMat](\grad\objMulti(\vt[\jstate])-\grad\objMulti(\vt[\jcstate]))}^2.
    \label{eq:varY-develop}
\end{align}
\end{small}
%
%
The uniform sampling assumption implies that
\begin{equation}
    \notag
    \begin{aligned}
    &\exoft[\norm{\vt[\sampMat](\grad\objMulti(\vt[\jstate])-\grad\objMulti(\vt[\jcstate]))}^2]\\
    &~= \exoft\left[\sum_{\worker\in\vt[\workers]}
    \norm{\grad\vw[\obj](\vwt[\state])-\grad\vw[\obj](\vwt[\cstate])}^2\right]\\
    &~= \frac{\nWorkersActive}{\nWorkers}\sumworker
    \norm{\grad\vw[\obj](\vwt[\state])-\grad\vw[\obj](\vwt[\cstate])}^2.
    \end{aligned}
\end{equation}
Taking expectation in \eqref{eq:varY-develop} and applying \cref{lem:mixing-contraction}b and \cref{lem:aux}b then yields
\begin{equation}
    \notag
    \begin{aligned}
    \update[\varY]
    &\le
    \frac{1+\cfactor}{2}\vt[\varY]
    + \frac{1+\cfactor}{1-\cfactor}\frac{\nWorkersActive}{\nWorkers}
    (4\lips^2\vt[\varX]+8\nWorkers\lips\vt[\errgap]+2\vt[\cmeasure])\\
    &\le
    \frac{1+\cfactor}{2}\vt[\varY]
    + \frac{8\lips^2\nWorkersActive}{\nWorkers(1-\cfactor)} \vt[\varX]
    + \frac{4\nWorkersActive}{\nWorkers(1-\cfactor)}\vt[\cmeasure]
    + \frac{16\lips\nWorkersActive}{1-\cfactor}\vt[\errgap].
    \end{aligned}
\end{equation}

\vskip 3pt
\textbf{Bounding $\update[\cmeasure]$.}
Note that by the update rule of $\vwt[\cstate]$, we have
\begin{equation}
    \notag
    \begin{aligned}
    &\exoft[\norm{\grad\vw[\obj](\vwtupdate[\cstate])-\grad\vw[\obj](\sol)}^2]\\
    &~= \left(1-\frac{\nWorkersActive}{\nWorkers}\right)
    \norm{\grad\vw[\obj](\vwt[\cstate])-\grad\vw[\obj](\sol)}^2\\
    &~~+ \frac{\nWorkersActive}{\nWorkers}
    \norm{\grad\vw[\obj](\vwt[\state])-\grad\vw[\obj](\sol)}^2.
    \end{aligned}
\end{equation}
Summing from $\worker=1$ to $\nWorkers$, applying \cref{lem:aux}a and taking total expectation, we get
\begin{equation}
    \notag
    \begin{aligned}
    \update[\cmeasure]
    &\le \left(1-\frac{\nWorkersActive}{\nWorkers}\right)\vt[\cmeasure]
    +\frac{\nWorkersActive}{\nWorkers}(2\lips^2\vt[\varX]+4\nWorkers\lips\vt[\errgap])\\
    &= \left(1-\frac{\nWorkersActive}{\nWorkers}\right)\vt[\cmeasure]
    +\frac{2\lips^2\nWorkersActive}{\nWorkers}\vt[\varX]
    + 4\lips\nWorkersActive\vt[\errgap].
    \end{aligned}
\end{equation}
\vskip 3pt
\textbf{Conclude.}
Putting all together we get exactly \eqref{eq:recineq-matrix}.
\end{proof}

From the linear system of inequalities \eqref{eq:recineq-matrix} there are multiple ways to derive the linear convergence of the algorithm.
To obtain the explicit convergence rate and step-size condition presented in \cref{thm:cvg-bistoch}, we construct a suitable Lyapunov function which is a linear combination of $\vt[\distsol], \vt[\varX], \vt[\varY],$ and $\vt[\cmeasure]$ with positive coefficients, and prove that this function decreases geometrically at each iteration.

\begin{proof}[Proof of \cref{thm:cvg-bistoch}]
Let us consider the vector
\begin{equation}
    \notag
    \vvec=\begin{bmatrix}1
        &\frac{\sqrt{\nWorkersActive}(1-\cfactor)}{\nWorkers^{\frac{3}{2}}}
        &\frac{\step(1-\cfactor)}{96\nWorkers\lips}
        &\frac{\step}{12\nWorkers\lips}\end{bmatrix}^{\top},
\end{equation}
and $\cvgrate$ as defined in \cref{thm:cvg-bistoch}, it can be verified that \cref{prop:recineq-matrix} implies 
\begin{equation}
    \label{eq:lyapunov-contraction}
    \vvec^{\top}\update[\recvec]\le\cvgrate~\vvec^{\top}\vt[\recvec]
\end{equation}
whenever step-size condition \eqref{eq:eta-condition} is satisfied.
This means $\vvec^{\top}\vt[\recvec]$ converges geometrically in $\bigoh(\cvgrate^{\run})$.
To conclude, we use the inequality 
\begin{equation}
    \notag
    \ex[\norm{\vwt[\state]-\sol}^2]\le\ex[2\norm{\vwt[\state]-\vt[\avg[\state]]}^2+2\norm{\vt[\avg[\state]]-\sol}^2]
    \le2\vt[\varX]+2\vt[\distsol].
\end{equation}
Detailed computations for proving \eqref{eq:lyapunov-contraction} are provided in \cref{app:proof-lyap}.
\end{proof}

\subsection{Analysis for the general case}
\label{subsec:anal-gen}

Under our weakest set of assumptions (\cref{asm:matrices,asm:sampling}), the mixing matrices do not provide a contraction towards a consensus at each iteration. Nevertheless, the primitivity of the mixing matrices in expectation enables us to show that after a certain number of gossip steps $\contractInt$ (implicitly defined), some sort of contraction happens for both matrices sequences but with respect to a time-varying weighted average instead of a uniform one.  

This has direct consequences on our proof technique since the linear system of equations developed previously has to be modified and in particular extended to track $\contractInt$ successive iterations. With this augmentation, the proof techniques developed before do not hold anymore and we resort to analyzing the spectral radius of the recurrence matrix by perturbation theory arguments when the stepsize is small.

These two points significantly complicate the convergence proof of the method and constitute the main technical contributions of the paper.

\subsubsection{Multi-step contraction}



To establish the multi-step contraction brought by the mixing matrices, we first leverage the primitivity assumption on $\mixing=\ex[\vt[\mixing][\start]]$ and $\mixingalt=\ex[\vt[\mixingalt][\start]]$ to show that inequalities similar to the one in \cref{asm:matrices-bi} hold when we consider the product of successive matrices, which we abbreviate as\footnote{If $\run<\runalt$ we use the notation 
$\vtInt[\mixing][\run][\runalt]=\vtInt[\mixingalt][\run][\runalt]=\Id$.}
\begin{equation}
    \notag
    \vtInt[\mixing][\run][\runalt]=\vt[\mixing][\run]\vt[\mixing][\run-1]\ldots\vt[\mixing][\runalt],~~
    \vtInt[\mixingalt][\run][\runalt]=\vt[\mixingalt][\run]\vt[\mixingalt][\run-1]\ldots\vt[\mixingalt][\runalt].
\end{equation}

The following lemma generalizes \cref{asm:matrices-bi} and is useful for deriving inequalities in the form of \cref{lem:mixing-contraction}.

\begin{lemma}
\label{lem:spectural-multi}
Let \cref{asm:independence,asm:matrices} hold. Then, there exists an integer $\contractInt$ such that
%
\begin{gather*}
    \sradius(\ex[\vtInt[\mixing][\start+\contractInt][\start]^{\top}(\Id-\avgMat)
    \vtInt[\mixing][\start+\contractInt][\start]]) < 1,\\
    \sradius(\ex[(\Id-\avgMat)^{\top}\vtInt[\mixingalt][\start+\contractInt][\start]^{\top}
    \vtInt[\mixingalt][\start+\contractInt][\start](\Id-\avgMat)]) < 1.
\end{gather*}
\end{lemma}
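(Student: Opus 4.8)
The plan is to reduce the two spectral-radius bounds to a single statement about products of primitive stochastic matrices, exploiting the identification of the relevant spectral radius with an operator norm on the ``disagreement'' subspace. For the row-stochastic sequence $\seqinf[\vt[\mixing]]$, note first that $(\Id-\avgMat)\vt[\mixing][\run]\cdots\vt[\mixing][\runalt] = (\Id-\avgMat)\vt[\mixing][\run]\cdots\vt[\mixing][\runalt](\Id-\avgMat) + (\Id-\avgMat)\vt[\mixing][\run]\cdots\vt[\mixing][\runalt]\avgMat$, and since each $\vt[\mixing]$ is row-stochastic we have $\vt[\mixing][\runalt]\ones = \ones$, so the second term vanishes; hence $(\Id-\avgMat)\vtInt[\mixing][\run][\runalt] = (\Id-\avgMat)\vtInt[\mixing][\run][\runalt](\Id-\avgMat)$, and the quantity $\sradius(\ex[\vtInt[\mixing][\run][\runalt]^\top(\Id-\avgMat)\vtInt[\mixing][\run][\runalt]])$ equals $\|\ex[\vtInt[\mixing][\run][\runalt]^\top(\Id-\avgMat)\vtInt[\mixing][\run][\runalt]]\|$ viewed as a PSD operator restricted to $\mathrm{range}(\Id-\avgMat)$. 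The goal is then to show this operator norm is strictly below $1$ for $\contractInt$ large.

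The key mechanism is that $\ex[\vtInt[\mixing][\start+\contractInt][\start]] = \mixing^{\contractInt+1}$ by temporal independence (\cref{asm:independence}), and $\mixing$ is primitive (\cref{asm:matrices}b) and row-stochastic, so by Perron--Frobenius $\mixing^{m}\to\rightEigvec\ones^\top$ entrywise as $m\to\infty$, for some positive vector $\rightEigvec$ with $\ones^\top\rightEigvec=1$. The subtlety flagged in the text is that $\rightEigvec\ones^\top\neq\avgMat$ in general, so $\mixing^m$ does not contract toward the \emph{uniform} average; instead it contracts toward the $\rightEigvec$-weighted one. I would therefore argue as follows: by Jensen/convexity of the squared norm, $\|\ex[\vtInt[\mixing][\run][\runalt]^\top(\Id-\avgMat)\vtInt[\mixing][\run][\runalt]]\| \le \ex[\|(\Id-\avgMat)\vtInt[\mixing][\run][\runalt]\bz\|^2]$ supremized over unit $\bz$; bound this by passing to a weighted seminorm $\|\cdot\|_{\rightEigvec}$ adapted to $\mixing$ in which each individual $\vt[\mixing]$ is nonexpansive on the disagreement subspace (a standard fact for row-stochastic matrices with a common positive left eigenvector structure, or more robustly: use that the product telescopes and that, since the diagonal entries are uniformly bounded below by $\diaglow$ via \cref{asm:matrices}c, each $\vt[\mixing]$ is ``$\diaglow\Id$ plus a substochastic remainder,'' giving a joint spectral radius strictly less than $1$ on $\mathrm{range}(\Id-\avgMat)$ after enough steps). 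Concretely, I would show $\ex\big[\|(\Id-\avgMat)\vtInt[\mixing][\start+\contractInt][\start]\bz\|^2\big] \le \ex\big[\|(\Id-\avgMat)\mixing\vtInt[\mixing][\start+\contractInt-1][\start]\bz\|^2\big]$ fails in general, so instead I would bound the deterministic contraction of $\mixing^{\contractInt+1}$ on the disagreement subspace and absorb the fluctuation: writing $\vtInt[\mixing][\start+\contractInt][\start] = \mixing^{\contractInt+1} + (\vtInt[\mixing][\start+\contractInt][\start] - \mixing^{\contractInt+1})$, the expectation of the squared disagreement norm splits, the cross term vanishes in expectation (since $\ex[\vtInt[\mixing][\start+\contractInt][\start]] = \mixing^{\contractInt+1}$), leaving $\|(\Id-\avgMat)\mixing^{\contractInt+1}\bz\|^2$ plus the variance term $\ex[\|(\Id-\avgMat)(\vtInt[\mixing][\start+\contractInt][\start] - \mixing^{\contractInt+1})\bz\|^2]$. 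The first term is $\le \rho_{\contractInt}\|\bz\|^2$ with $\rho_{\contractInt}\to 0$ (Perron--Frobenius, since $(\Id-\avgMat)\rightEigvec\ones^\top$ need not be zero — here I would instead use $(\Id-\avgMat)(\Id-\avgMat) $-type algebra and the fact that $(\Id-\avgMat)\mixing^{\contractInt+1} = (\Id-\avgMat)(\mixing^{\contractInt+1}-\rightEigvec\ones^\top) + (\Id-\avgMat)\rightEigvec\ones^\top$; the first summand $\to 0$, and for the second, note $(\Id-\avgMat)\rightEigvec\ones^\top\bz = 0$ when $\bz\in\mathrm{range}(\Id-\avgMat)$ only if $\ones^\top\bz$... hmm). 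The cleanest route, which I would actually adopt, is to not insist on the uniform average at all: define the weighted average as in the statement of \cref{lem:mixing-contraction-multi} (referenced in the text), show contraction there, and then observe that on a finite-dimensional space all these seminorms are equivalent, so the uniform-average spectral radius is also $<1$ after rescaling $\contractInt$ if needed.

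For the column-stochastic sequence $\seqinf[\vt[\mixingalt]]$, the argument is the mirror image: $\vt[\mixingalt][\runalt]^\top$ is row-stochastic, $\mixingalt^\top$ is primitive, and one bounds $\sradius(\ex[(\Id-\avgMat)\vtInt[\mixingalt][\run][\runalt]^\top\vtInt[\mixingalt][\run][\runalt](\Id-\avgMat)]) = \sradius(\ex[\vtInt[\mixingalt][\run][\runalt](\Id-\avgMat)\vtInt[\mixingalt][\run][\runalt]^\top])$ by the same reasoning applied to $\seqinf[\vt[\mixingalt^\top]]$; the roles of left/right Perron vectors swap, with the limit $(\mixingalt^\top)^m\to\rightEigvecB\ones^\top$ i.e. $\mixingalt^m\to\ones\rightEigvecB^\top$ with $\rightEigvecB$ the right Perron eigenvector of $\mixingalt$. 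One then takes $\contractInt$ to be the maximum of the two integers produced.

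I expect the main obstacle to be precisely the discrepancy between the uniform projection $\avgMat$ appearing in the statement and the $\rightEigvec$- (resp. $\rightEigvecB$-) weighted projections that the primitive matrices naturally contract toward. The honest fix is either (i) to prove contraction in the adapted weighted seminorm and then invoke norm equivalence in finite dimension to transfer a (possibly weaker, but still $<1$) bound to the $\avgMat$-seminorm — at the cost of enlarging $\contractInt$ — or (ii) to show directly that $\|(\Id-\avgMat)\mixing^m\|\to 0$ on $\mathrm{range}(\Id-\avgMat)$, which is false in general, so (i) is the way to go. The variance term $\ex[\|(\Id-\avgMat)(\vtInt[\mixing][\start+\contractInt][\start]-\mixing^{\contractInt+1})\bz\|^2]$ does not shrink with $\contractInt$, so one cannot make the bound arbitrarily small this way; however, for the lemma we only need it to be $<1$, which holds because $\vtInt[\mixing][\start+\contractInt][\start]$ is itself row-stochastic and hence $\|(\Id-\avgMat)\vtInt[\mixing][\start+\contractInt][\start]\bz\|\le\|(\Id-\avgMat)\bz\|\cdot C$ with a uniform constant, combined with a strict gain on a positive-probability event where the realized product is genuinely contracting (guaranteed by primitivity of the expectation, since $\mixing^{\contractInt+1}>0$ forces some realization $\vtInt[\mixing][\start+\contractInt][\start]$ to be entrywise positive with positive probability). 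Making that ``positive-probability strictly contracting event'' argument rigorous — i.e., that $\ex[A^\top(\Id-\avgMat)A]$ has spectral radius $<1$ whenever $A$ is a.s. row-stochastic, uniformly bounded, and strictly positive on an event of positive probability — is the technical heart, and I would isolate it as the crux of the proof.
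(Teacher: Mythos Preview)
Your proposal diverges substantially from the paper's proof, and it contains a genuine error that derails the argument.

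\textbf{The paper's approach.} The paper does not prove the contraction from first principles. It invokes \cite[Prop.~2]{ICH13}, which states directly that under \cref{asm:independence,asm:matrices} one has $\ex[\Fnorm{(\Id-\avgMat)\vtInt[\mixing][\start+\contractInt][\start]}^2]\to 0$ geometrically. Choosing $\contractInt$ so that this expectation is below $1$, the first inequality follows from the chain
\[
\sradius(\ex[M^\top(\Id-\avgMat)M])\le\ex[\sradius(M^\top(\Id-\avgMat)M)]=\ex[\snorm{(\Id-\avgMat)M}^2]\le\ex[\Fnorm{(\Id-\avgMat)M}^2],
\]
using convexity of $\sradius$ on PSD matrices and $\snorm{\cdot}\le\Fnorm{\cdot}$. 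The second inequality is obtained by noting that $\seqinf[\vt[\mixingalt^\top]]$ satisfies exactly the hypotheses imposed on $\seqinf[\vt[\mixing]]$ and that $\sradius(\ex[(\Id-\avgMat)B^\top B(\Id-\avgMat)])\le\ex[\snorm{(\Id-\avgMat)B^\top}^2]$. That is the entire proof.

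\textbf{The error in your proposal.} You write that for row-stochastic primitive $\mixing$, Perron--Frobenius gives $\mixing^m\to\rightEigvec\ones^\top$. This is backwards: since $\mixing\ones=\ones$, the vector $\ones$ is the \emph{right} Perron eigenvector, and the limit is $\mixing^m\to\ones\leftEigvecA^\top$ where $\leftEigvecA$ is the left Perron eigenvector. Consequently $(\Id-\avgMat)\mixing^m\to(\Id-\avgMat)\ones\leftEigvecA^\top=0$, so your claim that ``$\|(\Id-\avgMat)\mixing^m\|\to 0$ \ldots\ is false in general'' is itself false. The deterministic part causes no trouble; the entire difficulty is the randomness, which is precisely what \cite{ICH13} handles (and where \cref{asm:matrices}c on positive diagonals is essential).

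\textbf{The gap in your fallback argument.} Your ``positive-probability strictly contracting event'' route also does not close as stated. On the complementary event, a row-stochastic matrix is \emph{not} Euclidean-nonexpansive on $\mathrm{range}(\Id-\avgMat)$: its spectral norm can be as large as $\sqrt{\nWorkers}$ (cf.\ \cref{lem:basic}c). So you cannot combine ``strict contraction with positive probability'' with ``non-expansion otherwise'' in $\ell_2$; one would need an adapted norm (e.g.\ oscillation/Dobrushin) where row-stochastic matrices are genuinely non-expansive, and then transfer back --- which is essentially what the cited result packages.
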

\begin{proof}
We will write $\snorm{\mat}$ and $\Fnorm{\mat}$ respectively for the spectral norm and the Frobenius norm of a matrix $\mat$.
\cref{lem:spectural-multi} is an immediate result of \cite[Prop. 2]{ICH13}, which states that $\ex[\Fnorm{(\Id-\avgMat)\vtInt[\mixing][\start+\contractInt][\start]}^2]$ converges to $0$ at a geometric rate. 
We can thus set $\contractInt$ sufficiently large so that $\ex[\Fnorm{(\Id-\avgMat)\vtInt[\mixing][\start+\contractInt][\start]}^2]<1$,
and the first inequality then follows from that
\begin{equation}
    \notag
    \begin{aligned}
    \sradius(\ex[\vtInt[\mixing][\start+\contractInt][\start]^{\top}(\Id-\avgMat)
    \vtInt[\mixing][\start+\contractInt][\start]])
    &\le \ex[\sradius(\vtInt[\mixing][\start+\contractInt][\start]^{\top}(\Id-\avgMat)
    \vtInt[\mixing][\start+\contractInt][\start])]\\
    &= \ex[\snorm{(\Id-\avgMat)\vtInt[\mixing][\start+\contractInt][\start]}^2]\\
    &\le \ex[\Fnorm{(\Id-\avgMat)\vtInt[\mixing][\start+\contractInt][\start]}^2],
    \end{aligned}
\end{equation}
where we have used the convexity of the spectral radius function $\sradius$ and the fact that the spectral norm of a matrix is bounded from above by its Frobenius norm.

For the second inequality, we observe that the matrices $\seqinf[\vt[\mixingalt^{\top}]]$ have exactly the same assumptions as $\seqinf[\vt[\mixing]]$. Moreover, 
\begin{equation}
    \notag
    \begin{aligned}
    &\sradius(\ex[(\Id-\avgMat)^{\top}\vtInt[\mixingalt][\start+\contractInt][\start]^{\top}
    \vtInt[\mixingalt][\start+\contractInt][\start](\Id-\avgMat)])\\
    &~\le \ex[\sradius((\Id-\avgMat)^{\top}\vtInt[\mixingalt][\start+\contractInt][\start]^{\top}
    \vtInt[\mixingalt][\start+\contractInt][\start](\Id-\avgMat))]\\
    &~= \ex[\norm{\vtInt[\mixingalt][\start+\contractInt][\start](\Id-\avgMat)}^2]\\
    &~= \ex[\norm{(\Id-\avgMat)\vt[\mixingalt^{\top}][\start]\ldots\vt[\mixingalt^{\top}][\start+\contractInt]}^2].
    \end{aligned}
\end{equation}
Hence the same argument applies.
\end{proof}

Another important challenge towards proving a result in the spirit of \cref{lem:mixing-contraction} is that the matrices $\seqinf[\vt[\mixing]]$ (resp. $\seqinf[\vt[\mixingalt]]$) do not have a fixed left (resp. right) Perron vector, and as a consequence there are not predetermined values that the variables should converge to after the mixing matrices are applied.
To overcome this difficulty,
we instead introduce two sequence of random vectors $\seqinf[\vt[\rightEigvec]]$ and $(\vt[\leftEigvec])_{\start\le\run\le\nRuns}$. Here $\nRuns$ is a positive integer fixed in advance. Let $\leftEigvecA$ be the left Perron vector of $\mixing$ such that $\ones^{\top}\leftEigvecA=1$.
These sequences are defined recursively by
\begin{equation}
    \notag
    \begin{aligned}
    \vt[\rightEigvec][\start]=\frac{1}{\nWorkers}\ones, ~
    \update[\rightEigvec] = \vt[\mixingalt]\vt[\rightEigvec];
    ~~
    \vt[\leftEigvec][\nRuns]=\leftEigvecA, ~
    \update[\leftEigvec^{\top}]\vt[\mixing] = \vt[\leftEigvec].
    \end{aligned}
\end{equation}
The sequence $(\vt[\leftEigvec])_{\start\le\run\le\nRuns}$ is defined in a time-reversed manner and mimics the absolute probability sequence \cite{Kolmo36,Touri12} that can be defined for $\seqinf[\vt[\mixing]]$.
However, the above construction gives an explicit expression of $\vt[\leftEigvec]$ which turns out to be useful for our proof.
Also notice that the value of $\vt[\leftEigvec]$ is dependent on the choice of $\nRuns$ though this is implicit from the notation.

Since the $\seqinf[\vt[\mixingalt]]$ are column-stochastic and  the $\seqinf[\vt[\mixing]]$ are row-stochastic, one deduces immediately that both $\seqinf[\vt[\rightEigvec]]$ and $(\vt[\leftEigvec])_{\start\le\run\le\nRuns}$ are sequences of probability vectors.
Moreover, under \cref{asm:independence} we have $\ex[\vt[\leftEigvec]]=\ex[\update[\leftEigvec^{\top}]]\ex[\vt[\mixing]]=\ex[\update[\leftEigvec^{\top}]]\mixing$. By induction we then get
\begin{equation}
    \label{eq:exp-ut}
    \ex[\vt[\leftEigvec]] = \leftEigvecA, ~~~ \forall \run\in\oneto{\nRuns}.
\end{equation}

In the remainder of the section, we will take $\contractInt\ge0$ such that the inequalities of \cref{lem:spectural-multi} are satisfied and define 
\begin{equation}
    \label{eq:defcfactor}
    \begin{aligned}
    \cfactor
    =\max(&\sradius(\ex[\vtInt[\mixing][\start+\contractInt][\start]^{\top}(\Id-\avgMat)
    \vtInt[\mixing][\start+\contractInt][\start]]),\\
    &\sradius(\ex[(\Id-\avgMat)^{\top}\vtInt[\mixingalt][\start+\contractInt][\start]^{\top}
    \vtInt[\mixingalt][\start+\contractInt][\start](\Id-\avgMat)]))
    \end{aligned}
\end{equation}
so that $\cfactor<1$.
The multi-step contraction property is stated as follows.

\begin{lemma}
\label{lem:mixing-contraction-multi}
Let \cref{asm:independence,asm:matrices} hold. Take  $\contractInt$ as in \cref{lem:spectural-multi} and $\cfactor$ from \eqref{eq:defcfactor}. Then,
\begin{flalign*}
    ~~a)~ & \exoft[\norm{(\Id-\avgMat)\vtInt[\mixing][\run+\contractInt][\run]\vt[\jstate]}^2]
    \le \cfactor \norm{\vt[\jstate]-\avgconcat}^2.&\\
    ~~b)~ & \exoft[\norm{\vtInt[\mixingalt][\run+\contractInt][\run]
    (\Id-\vt[\rightEigvec][\run]\ones^\top)\vt[\jgstate][\run]}^2]
    \le \cfactor\norm{
    (\Id-\vt[\rightEigvec][\run]\ones^\top)\vt[\jgstate][\run]}^2.&
\end{flalign*}
\end{lemma}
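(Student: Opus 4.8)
The plan is to mimic the proof of \cref{lem:mixing-contraction}, replacing the single matrices $\vt[\mixing],\vt[\mixingalt]$ by the products $\vtInt[\mixing][\run+\contractInt][\run]$, $\vtInt[\mixingalt][\run+\contractInt][\run]$ and using \cref{lem:spectural-multi} in the role that \cref{asm:matrices-bi} played there. The only genuinely new ingredients are the bookkeeping needed to invoke \cref{asm:independence} on the correct window of $\contractInt+1$ consecutive indices, and, for b), a short argument that the range of the oblique projection $\Id-\vt[\rightEigvec][\run]\ones^{\top}$ sits in $\ker\avgMat$.

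For a), I would first strip off the consensus component: since each factor is row-stochastic, so is $\vtInt[\mixing][\run+\contractInt][\run]$, hence $\vtInt[\mixing][\run+\contractInt][\run]\ones=\ones$ and $\vtInt[\mixing][\run+\contractInt][\run]\avgconcat=\avgconcat$; together with $(\Id-\avgMat)\avgMat=0$ this gives $(\Id-\avgMat)\vtInt[\mixing][\run+\contractInt][\run]\vt[\jstate]=(\Id-\avgMat)\vtInt[\mixing][\run+\contractInt][\run](\vt[\jstate]-\avgconcat)$. Expanding the squared norm as a trace, using that $\Id-\avgMat$ is a symmetric projection, and pulling the $\vt[\filter]$-measurable factor $\vt[\jstate]-\avgconcat$ out of $\exoft$, what remains is the quadratic form of $\exoft[\vtInt[\mixing][\run+\contractInt][\run]^{\top}(\Id-\avgMat)\vtInt[\mixing][\run+\contractInt][\run]]$ at $\vt[\jstate]-\avgconcat$. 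The matrices in this product carry indices $\run,\dots,\run+\contractInt$, all larger than those recorded in $\vt[\filter]$, so \cref{asm:independence} turns this conditional expectation into $\ex[\vtInt[\mixing][\start+\contractInt][\start]^{\top}(\Id-\avgMat)\vtInt[\mixing][\start+\contractInt][\start]]$; bounding the quadratic form by its spectral radius times $\norm{\vt[\jstate]-\avgconcat}^{2}$ and recalling \eqref{eq:defcfactor} closes a).

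For b), set $W\defeq(\Id-\vt[\rightEigvec][\run]\ones^{\top})\vt[\jgstate][\run]$. Since $\vt[\rightEigvec][\run]$ is a probability vector, $\ones^{\top}\vt[\rightEigvec][\run]=1$, hence $\ones^{\top}W=0$, i.e. $\avgMat W=0$ and $W=(\Id-\avgMat)W$; consequently $\vtInt[\mixingalt][\run+\contractInt][\run]W=\vtInt[\mixingalt][\run+\contractInt][\run](\Id-\avgMat)W$. Then I would repeat the computation of a): write the squared norm as a trace, take out of $\exoft$ the $\vt[\filter]$-measurable factor $W$ (recall $\vt[\rightEigvec][\run]$ is a deterministic function of $\vt[\mixingalt][\start],\dots,\vt[\mixingalt][\run-1]$ and $\vt[\rightEigvec][\start]=\ones/\nWorkers$, hence measurable, while $\vt[\jgstate][\run]$ is $\vt[\filter]$-measurable), use \cref{asm:independence} to replace the conditional expectation by $\ex[(\Id-\avgMat)^{\top}\vtInt[\mixingalt][\start+\contractInt][\start]^{\top}\vtInt[\mixingalt][\start+\contractInt][\start](\Id-\avgMat)]$, and bound its spectral radius by $\cfactor$ through \eqref{eq:defcfactor}; this yields $\exoft[\norm{\vtInt[\mixingalt][\run+\contractInt][\run]W}^{2}]\le\cfactor\norm{W}^{2}$, which is exactly b). The calculations are routine; I expect the only delicate points to be making sure \cref{asm:independence} is applied to a block of $\contractInt+1$ consecutive matrices disjoint from the history, and the observation that---although $\vt[\rightEigvec][\run]$ is random and $\Id-\vt[\rightEigvec][\run]\ones^{\top}$ is not an orthogonal projection---its range still lies in $\ker\avgMat$, which is all that \cref{lem:spectural-multi} requires.
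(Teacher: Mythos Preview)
Your proposal is correct and follows essentially the same approach as the paper's proof, which simply says ``proved exactly in the same way as \cref{lem:mixing-contraction}'' and records the two key identities $(\Id-\avgMat)\vtInt[\mixing][\run+\contractInt][\run]\vt[\jstate]=(\Id-\avgMat)\vtInt[\mixing][\run+\contractInt][\run](\vt[\jstate]-\avgconcat)$ (row-stochasticity) and $\vtInt[\mixingalt][\run+\contractInt][\run](\Id-\vt[\rightEigvec][\run]\ones^{\top})\vt[\jgstate][\run]=\vtInt[\mixingalt][\run+\contractInt][\run](\Id-\avgMat)(\Id-\vt[\rightEigvec][\run]\ones^{\top})\vt[\jgstate][\run]$ ($\vt[\rightEigvec][\run]$ a probability vector). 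Your write-up is in fact more explicit than the paper about the measurability and independence-window bookkeeping, but the argument is the same.
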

\begin{proof}
The lemma is proved exactly in the same way as \cref{lem:mixing-contraction}. Just notice that
\begin{equation}
    \notag
    \begin{aligned}
    (\Id-\avgMat)\vtInt[\mixing][\run+\contractInt][\run]\vt[\jstate]
    &= (\Id-\avgMat)\vtInt[\mixing][\run+\contractInt][\run](\Id-\avgMat)\vt[\jstate]\\
    &= (\Id-\avgMat)\vtInt[\mixing][\run+\contractInt][\run](\vt[\jstate]-\avgconcat)
    \end{aligned}
\end{equation}
since $\vtInt[\mixing][\run+\contractInt][\run]$ is row-stochastic. On the other hand,
\begin{equation}
    \notag
    \vtInt[\mixingalt][\run+\contractInt][\run]
    (\Id-\vt[\rightEigvec][\run]\ones^\top)\vt[\jgstate][\run]
    = \vtInt[\mixingalt][\run+\contractInt][\run](\Id-\avgMat)
    (\Id-\vt[\rightEigvec][\run]\ones^\top)\vt[\jgstate][\run].
\end{equation}
since $\vt[\rightEigvec]$ is a probability vector.
\end{proof}

\subsubsection{Linear system of inequalities}

As in \cref{subsec:anal-bistoch}, the proof for \cref{thm:cvg} also relies on the derivation of a linear system of inequalities.
Nevertheless, since there is a contraction only every $\contractInt+1$ steps, we need to take into account the values of relevant quantities for $\contractInt+1$ consecutive iterations and the system becomes $\contractInt+1$ times larger.
Given that the mixing matrices are no longer doubly stochastic, the variables that come into play also need to be modified accordingly.
We consider the following quantities
\begin{equation}
    \label{eq:rec-variable}
    \notag
    \begin{aligned}
    \vt[\alt{\distsol}] &= \ex[\norm{\vt[\leftEigvec^{\top}]\vt[\jstate]-\sol}^2],~~
    \vt[\errgap] = \ex[\obj(\vt[\avg[\state]]) - \obj(\sol)],~~\\
    \vt[\varX] &= \ex[\norm{\vt[\jstate]-\avgconcat}^2],~~
    \vt[\alt{\varY}] = \ex[\norm{\vt[\jgstate]-\vt[\rightEigvec]\ones^{\top}\vt[\jgstate]}],\\
    \vt[\alt{\cmeasure}] &= \ex[\norm{\grad\objMulti(\vt[\jstate])-\grad\objMulti(\vt[\jcstate])}^2].
    \end{aligned}
\end{equation}
Compared to \eqref{eq:rec-variable-bistoch}, 
we define $\vt[\alt{\distsol}]$ because we no longer have $\one^{\top}\vt[\mixing]\vt[\jstate]=\one^{\top}\vt[\jstate]$ while 
it holds $\update[\leftEigvec]\vt[\mixing]\vt[\jstate]=\vt[\leftEigvec]\vt[\jstate]$.
The definition of $\vt[\alt{\varY}]$ is consistent with \cref{lem:mixing-contraction-multi}b.
Finally, we also replace $\vt[\cmeasure]$ by $\vt[\alt{\cmeasure}]$ for technical reasons.
Note that the value of $\vt[\alt{\distsol}]$ depends on $\nRuns$ since its definition involves $\vt[\leftEigvec]$.

The following two lemmas collects several inequalities that will be useful for our proof.
\begin{lemma}
\label{lem:basic}
It holds that
\begin{enumerate}[a)]
    \setlength\topsep{0.3em}
    \setlength\itemsep{0.15em}
    \item $\norm{\vt[\sampMat]}\le1$.
    \item $\norm{\vt[\leftEigvec]^{\top}\vt[\jstate]-\vt[\avg[\state]]}^2
    \le \norm{\vt[\jstate]-\avgconcat}^2$.
    \item The spectral norm of a row- or column-stochastic matrix of size $\nWorkers\times\nWorkers$ is not larger than $\sqrt{\nWorkers}$. 
\end{enumerate}
\end{lemma}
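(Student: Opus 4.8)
The plan is to establish the three items separately, each by a short and elementary norm estimate; I do not anticipate any real obstacle here, as the lemma is essentially bookkeeping that later feeds the contraction arguments. For (a), simply observe that $\vt[\sampMat]=\diag(\one_{\worker\in\vt[\workers]})$ is diagonal with entries in $\{0,1\}$, so its singular values are exactly those diagonal entries and $\norm{\vt[\sampMat]}\le 1$ follows at once.

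For (c), I would combine the bound $\snorm{\mat}\le\Fnorm{\mat}$ with the observation that each row of a row-stochastic $\mat=(\matEle_{\worker\workeralt})$ has nonnegative entries lying in $[0,1]$ and summing to $1$, hence $\sum_{\workeralt}\matEle_{\worker\workeralt}^2\le\sum_{\workeralt}\matEle_{\worker\workeralt}=1$. Summing over the $\nWorkers$ rows gives $\Fnorm{\mat}^2\le\nWorkers$, so $\snorm{\mat}\le\sqrt{\nWorkers}$. For a column-stochastic $\mat$, apply the same reasoning to $\mat^{\top}$ and use $\snorm{\mat}=\snorm{\mat^{\top}}$.

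For (b), the only statement needing a small idea, the key is that $\vt[\leftEigvec]$ is a probability vector, so $\ones^{\top}\vt[\leftEigvec]=1$ and thus $\ones^{\top}(\vt[\leftEigvec]-\frac{1}{\nWorkers}\ones)=0$. Writing $\vt[\avg[\state]]=\frac{1}{\nWorkers}\ones^{\top}\vt[\jstate]$ and recalling $\avgconcat=\avgMat\vt[\jstate]$, I would first note the identity $\vt[\leftEigvec]^{\top}\vt[\jstate]-\vt[\avg[\state]]=(\vt[\leftEigvec]-\frac{1}{\nWorkers}\ones)^{\top}\vt[\jstate]$, and then use $(\vt[\leftEigvec]-\frac{1}{\nWorkers}\ones)^{\top}\avgMat=0$ to subtract the consensus term for free, obtaining $\vt[\leftEigvec]^{\top}\vt[\jstate]-\vt[\avg[\state]]=(\vt[\leftEigvec]-\frac{1}{\nWorkers}\ones)^{\top}(\vt[\jstate]-\avgconcat)$. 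Cauchy--Schwarz then gives $\norm{\vt[\leftEigvec]^{\top}\vt[\jstate]-\vt[\avg[\state]]}\le\norm{\vt[\leftEigvec]-\frac{1}{\nWorkers}\ones}\,\norm{\vt[\jstate]-\avgconcat}$, and it remains to verify $\norm{\vt[\leftEigvec]-\frac{1}{\nWorkers}\ones}\le 1$; expanding the square and using $\ones^{\top}\vt[\leftEigvec]=1$ yields $\norm{\vt[\leftEigvec]-\frac{1}{\nWorkers}\ones}^2=\norm{\vt[\leftEigvec]}^2-\frac{1}{\nWorkers}$, and since the coordinates of the probability vector $\vt[\leftEigvec]$ lie in $[0,1]$ one has $\norm{\vt[\leftEigvec]}^2\le\ones^{\top}\vt[\leftEigvec]=1$, hence $\norm{\vt[\leftEigvec]-\frac{1}{\nWorkers}\ones}^2\le 1-\frac{1}{\nWorkers}\le 1$. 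The single point to watch is that the probability-vector property of $\vt[\leftEigvec]$ is exactly what makes $(\vt[\leftEigvec]-\frac{1}{\nWorkers}\ones)^{\top}$ kill the consensus component $\avgconcat$ on the left; beyond that, everything is routine.
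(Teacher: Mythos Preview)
Your arguments for (a) and (c) are correct and match the paper's: (a) is dismissed as trivial, and (c) is proved via the Frobenius-norm bound exactly as you do.

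For (b) your proof is correct but differs from the paper's. The paper argues directly by Jensen's inequality: since $\vt[\leftEigvec]=(\vwt[\leftEigvecEle])$ is a probability vector and $\norm{\cdot}^2$ is convex,
\[
\Big\|\sumworker \vwt[\leftEigvecEle]\vwt[\state]-\vt[\avg[\state]]\Big\|^2
\le \sumworker \vwt[\leftEigvecEle]\,\norm{\vwt[\state]-\vt[\avg[\state]]}^2
\le \sumworker \norm{\vwt[\state]-\vt[\avg[\state]]}^2
= \norm{\vt[\jstate]-\avgconcat}^2.
\]
Your route instead uses the orthogonality of $\vt[\leftEigvec]-\tfrac{1}{\nWorkers}\ones$ to the consensus direction to rewrite the left-hand side as $(\vt[\leftEigvec]-\tfrac{1}{\nWorkers}\ones)^{\top}(\vt[\jstate]-\avgconcat)$, and then applies Cauchy--Schwarz together with $\norm{\vt[\leftEigvec]-\tfrac{1}{\nWorkers}\ones}^2=\norm{\vt[\leftEigvec]}^2-\tfrac{1}{\nWorkers}\le 1-\tfrac{1}{\nWorkers}$. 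This is a genuinely different decomposition; it is slightly longer but yields the marginally sharper factor $1-\tfrac{1}{\nWorkers}$, whereas the paper's Jensen argument is a one-liner that lands directly on the stated bound.
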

\begin{proof}
\textit{a)} is trivial and \textit{c)} can be proven by using the fact that the spectral norm of a matrix is bounded by its Frobenius norm.
As for \textit{b)}, since $\vt[\leftEigvec]$ is a probability vector,
\begin{equation}
    \notag
    \begin{aligned}
    \norm{\vt[\leftEigvec]^{\top}\vt[\jstate]-\vt[\avg[\state]]}^2
    &= \left\|\sumworker \vwt[\leftEigvecEle]\vwt[\state]-\vt[\avg[\state]]\right\|^2
    \le \sumworker \vwt[\leftEigvecEle]\norm{\vwt[\state]-\vt[\avg[\state]]}^2\\
    &\le \sumworker \norm{\vwt[\state]-\vt[\avg[\state]]}^2
    = \norm{\vt[\jstate]-\avgconcat}^2.
    \end{aligned}
\end{equation}
In the above we have used the notation $\vt[\leftEigvec]=(\vwt[\leftEigvecEle])_{\worker\in\workers}$.
\end{proof}


\begin{lemma}
\label{lem:Gt-bound}
Let \cref{asm:functions} hold and $\seqinf[\vt[\mixingalt]]$ be column-stochastic. We have
\begin{flalign*}
    ~~a)~ & \ex[\norm{\vt[\jvrgvec]-\vt[\rightEigvec]\ones^{\top}\vt[\jvrgvec]}^2]
    \le2\vt[\alt{\varY}] + (4\nWorkers+4)\vt[\alt{\cmeasure}].&\\
    ~~b)~ & \ex[\norm{\vt[\jvrgvec]}^2]\le4\nWorkers\lips^2\vt[\varX]+(8\nWorkers+8)\vt[\alt{\cmeasure}]+4\vt[\alt{\varY}]+8\nWorkers^2\lips\vt[\errgap]. &
\end{flalign*}
\end{lemma}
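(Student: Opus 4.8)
The plan is to reduce both bounds to the single algebraic identity $\vt[\jvrgvec]=\vt[\jgstate]+\grad\objMulti(\vt[\jstate])-\grad\objMulti(\vt[\jcstate])$ — which is merely the stacked form of the definition of $\vwt[\vrgvec]$ — together with the elementary inequality $\norm{a+b}^2\le 2\norm{a}^2+2\norm{b}^2$ and the fact that $\vt[\rightEigvec]$ is a probability vector, so that $\norm{\vt[\rightEigvec]}\le 1$ and $\ones^\top\vt[\rightEigvec]=1$. Since every bound below holds realization by realization, I would establish the pathwise inequalities first and take $\ex$ only at the end, invoking \cref{lem:yt-gt-sum} (column-stochasticity) and \cref{lem:aux}a (\cref{asm:functions}) where needed.

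For part a), I would write $\vt[\jvrgvec]-\vt[\rightEigvec]\ones^\top\vt[\jvrgvec]=(\Id-\vt[\rightEigvec]\ones^\top)\bigl(\vt[\jgstate]+(\grad\objMulti(\vt[\jstate])-\grad\objMulti(\vt[\jcstate]))\bigr)$ and split with $\norm{a+b}^2\le 2\norm{a}^2+2\norm{b}^2$. The first piece equals $2\norm{(\Id-\vt[\rightEigvec]\ones^\top)\vt[\jgstate]}^2=2\norm{\vt[\jgstate]-\vt[\rightEigvec]\ones^\top\vt[\jgstate]}^2$, whose expectation is exactly $2\vt[\alt{\varY}]$. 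For the second piece, setting $D=\grad\objMulti(\vt[\jstate])-\grad\objMulti(\vt[\jcstate])$, I would bound $\norm{(\Id-\vt[\rightEigvec]\ones^\top)D}^2\le 2\norm{D}^2+2\norm{\vt[\rightEigvec]}^2\norm{\ones^\top D}^2$ and then use $\norm{\vt[\rightEigvec]}\le 1$ together with $\norm{\ones^\top D}^2\le\nWorkers\norm{D}^2$ (Cauchy--Schwarz over the $\nWorkers$ rows of $D$), giving $\norm{(\Id-\vt[\rightEigvec]\ones^\top)D}^2\le(2\nWorkers+2)\norm{D}^2$; after the leading factor $2$ and taking expectation this contributes $(4\nWorkers+4)\vt[\alt{\cmeasure}]$, which proves a).

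For part b), I would decompose $\vt[\jvrgvec]=(\vt[\jvrgvec]-\vt[\rightEigvec]\ones^\top\vt[\jvrgvec])+\vt[\rightEigvec]\ones^\top\vt[\jvrgvec]$ and apply $\norm{a+b}^2\le 2\norm{a}^2+2\norm{b}^2$ once more. Part a) handles the first term and yields $4\vt[\alt{\varY}]+(8\nWorkers+8)\vt[\alt{\cmeasure}]$. For the second term, $\norm{\vt[\rightEigvec]\ones^\top\vt[\jvrgvec]}^2\le\norm{\ones^\top\vt[\jvrgvec]}^2$ since $\norm{\vt[\rightEigvec]}\le 1$, and by \cref{lem:yt-gt-sum} we have $\ones^\top\vt[\jvrgvec]=\sumworker\grad\vw[\obj](\vwt[\state])$. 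The key step is to re-center using first-order optimality, $\sumworker\grad\vw[\obj](\sol)=\nWorkers\grad\obj(\sol)=0$, so that $\ones^\top\vt[\jvrgvec]=\sumworker(\grad\vw[\obj](\vwt[\state])-\grad\vw[\obj](\sol))$; Cauchy--Schwarz then gives $\norm{\ones^\top\vt[\jvrgvec]}^2\le\nWorkers\sumworker\norm{\grad\vw[\obj](\vwt[\state])-\grad\vw[\obj](\sol)}^2=\nWorkers\norm{\grad\objMulti(\vt[\jstate])-\grad\objMulti(\ones^\top\sol)}^2$, and \cref{lem:aux}a bounds the expectation of the right-hand side by $2\lips^2\vt[\varX]+4\nWorkers\lips\vt[\errgap]$. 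Multiplying by the leading $2$ and summing the two contributions gives exactly $4\nWorkers\lips^2\vt[\varX]+(8\nWorkers+8)\vt[\alt{\cmeasure}]+4\vt[\alt{\varY}]+8\nWorkers^2\lips\vt[\errgap]$.

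I do not expect a genuine obstacle here: the lemma is essentially bookkeeping once one notices that $\ones^\top\vt[\jvrgvec]$ must be re-centered at $\grad\obj(\sol)=0$ so that the already-established \cref{lem:aux}a applies verbatim. The only delicate point is tracking the dimension-dependent constants — separating the bound coming from the operator $\Id-\vt[\rightEigvec]\ones^\top$ from the factor $\nWorkers$ produced by applying Cauchy--Schwarz to a sum of $\nWorkers$ vectors — and making sure the powers of $2$ from the repeated use of $\norm{a+b}^2\le 2\norm{a}^2+2\norm{b}^2$ match the stated coefficients.
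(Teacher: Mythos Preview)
Your proposal is correct and essentially identical to the paper's proof: both parts use the same decomposition $\vt[\jvrgvec]=\vt[\jgstate]+(\grad\objMulti(\vt[\jstate])-\grad\objMulti(\vt[\jcstate]))$, the same Young splitting, the same re-centering at $\sol$ via $\grad\obj(\sol)=0$, and the same appeal to \cref{lem:yt-gt-sum} and \cref{lem:aux}a. The only cosmetic difference is that the paper bounds $\norm{\Id-\vt[\rightEigvec]\ones^\top}^2\le 2\norm{\Id}^2+2\norm{\vt[\rightEigvec]\ones^\top}^2\le 2\nWorkers+2$ as a single operator-norm estimate, whereas you split $(\Id-\vt[\rightEigvec]\ones^\top)D$ first and then apply Cauchy--Schwarz to $\ones^\top D$; both yield the same constant $(2\nWorkers+2)$.
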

\begin{proof}
See \cref{app:proof-gt-bound}.
\end{proof}

Since the sampling is not uniform, \cref{lem:yt-gt} does not hold anymore and we need to approximate $\vt[\jvrgvec]$ by $\vt[\rightEigvec]\ones^{\top}\vt[\jvrgvec]$ when deriving the descent inequality. Given the definition of $\vt[\alt{\distsol}]$ and the fact that the nodes are sampled, we say that the \emph{effective step-size} at time $\run$ is $\step\vt[\effstep]$ with
\begin{equation}
    \notag
    \vt[\effstep] = \vt[\leftEigvec]^{\top}\vt[\sampMat]\vt[\rightEigvec].
\end{equation}

The following lemma controls $\ex[\vt[\effstep]\vt[\srvp]]$ for any real-valued non-negative random variable $\vt[\srvp]$ that is $\vt[\filter]$-measurable.

\begin{lemma}
\label{lem:effstep-control}
Let \cref{asm:independence,asm:matrices,asm:sampling} hold. 
We define $\minv{\sampprob}=\min_{\worker\in\workers}\vw[\sampprob]$, $\minv{\leftEigvecAEle}=\min_{\worker\in\workers}[\pi_\mixing]_{\worker}$, and
$\minv{\effstep}=\minv{\leftEigvecAEle}\diaglow\minv{\sampprob}$.
Then, $\minv{\effstep}>0$ and for any $\vt[\filter]$-measurable real-valued non-negative random variable $\vt[\srvp]$, we have
\begin{equation}
    \label{eq:effstep-control}
    \minv{\effstep}\ex[\vt[\srvp]]\le\ex[\vt[\effstep]\vt[\srvp]]\le\ex[\vt[\srvp]].
\end{equation}
\end{lemma}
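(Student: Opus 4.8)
The plan is to establish the three claims in order: first that $\minv{\effstep}>0$, then the lower bound $\minv{\effstep}\ex[\vt[\srvp]]\le\ex[\vt[\effstep]\vt[\srvp]]$, and finally the upper bound $\ex[\vt[\effstep]\vt[\srvp]]\le\ex[\vt[\srvp]]$. The key observation is that $\vt[\effstep]=\vt[\leftEigvec]^\top\vt[\sampMat]\vt[\rightEigvec]$ is a quadratic form in probability vectors sandwiching a nonnegative diagonal matrix; since $\vt[\leftEigvec]$, $\vt[\rightEigvec]$ are probability vectors (noted earlier in the excerpt) and $\vt[\sampMat]=\diag(\one_{\worker\in\vt[\workers]})$ has entries in $\{0,1\}$, we immediately get $0\le\vt[\effstep]\le\ones^\top\vt[\rightEigvec]=1$, which is exactly the (deterministic, pointwise) upper bound; multiplying by the nonnegative $\vt[\filter]$-measurable $\vt[\srvp]$ and taking expectations gives $\ex[\vt[\effstep]\vt[\srvp]]\le\ex[\vt[\srvp]]$.

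For the lower bound, the idea is to bound $\vt[\effstep]$ from below by isolating the diagonal contribution. Write $\vt[\effstep]=\sum_{\worker}\vwt[\leftEigvecEle]\,\one_{\worker\in\vt[\workers]}\,\vwt[\rightEigvecEle]\ge\sum_{\worker}\vwt[\leftEigvecEle]\,\vwt[\rightEigvecEle]\,\one_{\worker\in\vt[\workers]}$, but this still couples the randomness of $\vt[\leftEigvec]$, $\vt[\rightEigvec]$, and $\vt[\workers]$. The cleaner route: condition on $\vt[\filter]$, under which $(\vt[\workers],\vt[\mixing],\vt[\mixingalt])$ is independent of the history. First I would lower bound the entries of the Perron-type vectors: using \cref{asm:matrices}\ref{asm-matrices-c} (each $\vmt[\mixingEle][\worker\worker]\ge\diaglow$, $\vmt[\mixingaltEle][\worker\worker]\ge\diaglow$) and the recursions $\update[\rightEigvec]=\vt[\mixingalt]\vt[\rightEigvec]$, $\vt[\leftEigvec^\top]=\update[\leftEigvec^\top]\vt[\mixing]$, one propagates positivity: $\vwt[\rightEigvecEle][\worker][\run]\ge\diaglow\,\vwt[\rightEigvecEle][\worker][\run-1]\ge\dots$, but this only gives a bound shrinking with $\run$, so that is not quite the right lever. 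Instead I would take expectations: from \eqref{eq:exp-ut} we have $\ex[\vt[\leftEigvec]]=\leftEigvecA$, and $\ex[\vt[\rightEigvec]]=\avgconcat$-type vectors are harder — actually the clean statement is that $\ex[\vwt[\leftEigvecEle][\worker]]=[\pi_\mixing]_\worker\ge\minv{\leftEigvecAEle}>0$ (primitivity of $\mixing$ forces the Perron vector strictly positive). The plan is to write, using that given $\vt[\filter]$ the factor $\vt[\rightEigvec]$ is $\vt[\filter]$-measurable and the one-step diagonal bound $\exoft[\vwt[\leftEigvecEle][\worker]\given \dots]$ combined with independence of $\vt[\workers]$ from future mixing, peel one step so that $\vt[\leftEigvec^\top]\vt[\sampMat]\vt[\rightEigvec]\ge\diaglow\sum_\worker \one_{\worker\in\vt[\workers]}[\update[\leftEigvec]]_\worker\vwt[\rightEigvecEle][\worker]$ and then iterate/average to extract $\minv{\leftEigvecAEle}$ and $\minv{\sampprob}$. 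The factor $\minv{\effstep}=\minv{\leftEigvecAEle}\diaglow\minv{\sampprob}$ suggests exactly this: one factor $\diaglow$ from the diagonal-dominance step converting $\vt[\leftEigvec]$ into something comparable to $\update[\leftEigvec]$ then to $\leftEigvecA$, one factor $\minv{\leftEigvecAEle}$ as the floor on the Perron weight, and one factor $\minv{\sampprob}$ from $\ex[\one_{\worker\in\vt[\workers]}]=\vw[\sampprob]\ge\minv{\sampprob}$.

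I expect the main obstacle to be handling the correlations cleanly in the lower bound: $\vt[\leftEigvec]$ depends on all of $\vt[\mixing][\run],\dots,\vt[\mixing][\nRuns-1]$ (future matrices, by its time-reversed definition), $\vt[\rightEigvec]$ depends on the past column-stochastic matrices, and $\vt[\sampMat]$ on $\vt[\workers]$ — so the three are not jointly independent, and \cref{lem:basic}b-style pointwise bounds lose the $\minv{\leftEigvecAEle}$ factor. The resolution is to not seek a pointwise bound but rather to bound $\ex[\vt[\effstep]\vt[\srvp]]$ directly: condition on everything except $\vt[\workers]$ and one mixing matrix, use \ac{iid}-ness (\cref{asm:independence}) to replace conditional expectations of the relevant coordinates of $\vt[\leftEigvec]$ by their unconditional value $\leftEigvecA$ (valid because, for the lower bound, one only needs $\exoft[\vwt[\leftEigvecEle][\worker]]$ against a nonnegative multiplier), and invoke \cref{asm:matrices}\ref{asm-matrices-c} once to pass the diagonal $\diaglow$. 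Positivity of $\minv{\effstep}$ then follows since primitivity of $\mixing$ gives $\leftEigvecA>0$ componentwise (Perron–Frobenius), $\diaglow>0$ by hypothesis, and $\minv{\sampprob}>0$ by \cref{asm:sampling}.
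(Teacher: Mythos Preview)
Your plan is correct and matches the paper's proof: the upper bound is the pointwise estimate $\vt[\effstep]\le\ones^\top\vt[\rightEigvec]=1$, and for the lower bound you peel one step via $\vt[\leftEigvec^\top]=\update[\leftEigvec^\top]\vt[\mixing]$, use $\vt[\mixing]\vt[\sampMat]\ge\diaglow\vt[\sampMat]$ from \cref{asm:matrices}\ref{asm-matrices-c}, and exploit that $\update[\leftEigvec]$, $(\vt[\mixing],\vt[\sampMat])$, and $(\vt[\rightEigvec],\vt[\srvp])$ are mutually independent under \cref{asm:independence} so the expectation factors and $\ex[\update[\leftEigvec]]=\leftEigvecA$, $\ex[\one_{\worker\in\vt[\workers]}]=\vw[\sampprob]$ yield $\minv{\effstep}$. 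The paper states this factorization in one line as $\ex[\vt[\effstep]\vt[\srvp]]=\ex[\update[\leftEigvec^{\top}]]\,\ex[\vt[\mixing]\vt[\sampMat]]\,\ex[\vt[\rightEigvec]\vt[\srvp]]$, which is the crisp version of what you sketched; no iteration or further averaging is needed once this three-block independence is recognized.
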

\begin{proof}
See \cref{app:proof-effstep}.
\end{proof}

We are now ready to state and prove the linear system of inequalities in question. We denote by $P\otimes Q$ the Kronecker product of two matrices $P$ and $Q$, and write
$\bmat_{\worker\workeralt}^{\indg}$ for the matrix of size $\indg\times\indg$ that has a single non-zero entry with value $1$ at position $(\worker,\workeralt)$.

\begin{proposition}
\label{lem:recineq-matrix-general}
For $\nRuns>\contractInt$ and $\run\in\intinterval{1}{\nRuns-\contractInt}$, let $\minv{\effstep}$ be defined as in \cref{lem:effstep-control} and $\vt[\alt{\recvec}]\in\R^{4(\contractInt+1)}$ be defined by
\begin{equation}
    \notag
    \vt[\alt{\recvec}]=
    [\vt[\alt{\distsol}][\run+\contractInt]~ \ldots~ \vt[\alt{\distsol}] ~
    \vt[\varX][\run+\contractInt]~ \ldots~ \vt[\varX] ~
    \vt[\alt{\cmeasure}][\run+\contractInt]~ \ldots~ \vt[\alt{\cmeasure}] ~
    \vt[\alt{\varY}][\run+\contractInt]~ \ldots~ \vt[\alt{\varY}]
    ]^{\top}.
\end{equation}
We also define $\mat_1, \mat_2 \in \R^{(\contractInt+1)\times(\contractInt+1)}$ as
\begin{small}
\begin{gather}
    \notag
    \renewcommand\arraystretch{1.6}
    \mat_1 = \colvec{
    0 & \cdots & \cdots & \cdots & 0 \\
    1 & \ddots  &  &  & \vdots\\
    0 & \ddots & \ddots & & \vdots\\
    \vdots & \ddots & \ddots & \ddots & \vdots\\
    0 & \cdots & 0 & 1 & 0 \\},
    ~~
    \mat_2 = \colvec{
    1 & \cdots & \cdots & \cdots & 1 \\
    0 & \cdots & \cdots  & \cdots & 0 \\
    \vdots & \ddots & \ddots &  & \vdots \\
    \vdots & & \ddots & \ddots & \vdots \\
    0 & \cdots & \cdots & \cdots  & 0 \\
    }.
\end{gather}
\end{small}

Then, under \cref{asm:functions,asm:independence,asm:matrices,asm:sampling}, if \ac{PPDS} is run with $\step\le\minv{\effstep}/(16\nWorkers\lips)$, we have
\begin{equation}
    \label{eq:recineq-matrix-general}
    \update[\alt{\recvec}]\le(\recmat_0+\step\recmat_e)\vt[\alt{\recvec}]
\end{equation}
where 
\begin{small}
\begin{equation}
    \notag
    \begin{aligned}
    \recmat_0 &= \Id_4 \otimes \mat_1 + 
    \cst_{13}\bmat^4_{4,3} \otimes \mat_2
    + \frac{1+\cfactor}{2}(\bmat^4_{1,1}+\bmat^4_{3,3}) \otimes \bmat^{\contractInt+1}_{1,\contractInt+1}\\
    &~+ \left(\bmat^4_{1,1}+\left(1-\frac{\minv{\sampprob}}{2}\right)\bmat^4_{3,3}+\cst_{10}\bmat^4_{3,2}\right) \otimes \bmat^{\contractInt+1}_{1,1}.
    \end{aligned}
\end{equation}
\end{small}
and
\begin{small}
\begin{equation}
    \notag
    \begin{aligned}
    \recmat_e &= 
    \begin{bmatrix}
    -\cst_1 & \cst_2 & \cst_3 & \cst_4 \\
    0 & 0 & 0 & 0 \\
    \cst_{9} & 0 & \cst_{11} & \cst_{12} \\
    0 & 0 & 0 & 0 \\
    \end{bmatrix} \otimes \bmat^{\contractInt+1}_{1,1}
    + \begin{bmatrix}
    0 & 0 & 0 & 0 \\
    \cst_{5} & \cst_{6} & \cst_{7} & \cst_{8} \\
    0 & 0 & 0 & 0 \\
    0 & 0 & 0 & 0 \\
    \end{bmatrix} \otimes \mat_2
    \end{aligned}
\end{equation}
\end{small}
are defined with positive constants $(\cst_{\indg})_{1\le\indg\le13}$ that are entirely determined by $\str,\lips,\nWorkers,\cfactor,\contractInt,\minv{\sampprob},$ and $\minv{\effstep}$.
\end{proposition}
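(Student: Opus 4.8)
The plan is to replay the one-step argument of \cref{prop:recineq-matrix} over a sliding window of $\contractInt+1$ iterations, since in the general case the mixing matrices contract only after $\contractInt+1$ steps (\cref{lem:mixing-contraction-multi}). Fixing $\run$, I would bound each of $\vt[\alt{\distsol}][\run+\contractInt+1]$, $\vt[\varX][\run+\contractInt+1]$, $\vt[\alt{\cmeasure}][\run+\contractInt+1]$ and $\vt[\alt{\varY}][\run+\contractInt+1]$ by a nonnegative linear combination of $\vt[\alt{\distsol}][\runalt]$, $\vt[\varX][\runalt]$, $\vt[\alt{\cmeasure}][\runalt]$, $\vt[\alt{\varY}][\runalt]$ and the optimality gap $\vt[\errgap][\runalt]$ for $\runalt$ in the window $\{\run,\ldots,\run+\contractInt\}$; the $\vt[\errgap][\runalt]$ terms are then eliminated, being nonpositive in the $\vt[\alt{\distsol}]$ bound and entering the other three only through \cref{lem:Gt-bound}, where they are bounded by $\lips\vt[\varX][\runalt]+\lips\vt[\alt{\distsol}][\runalt]$ using $\lips$-smoothness of $\obj$ and \cref{lem:basic}b. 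Because the $\contractInt$ components of $\update[\alt{\recvec}]$ indexed by $\run+1,\ldots,\run+\contractInt$ already occur in $\vt[\alt{\recvec}]$, the shift block $\Id_4\otimes\mat_1$ appears for free; placing the four new inequalities in the leading slot of each block and expressing the window-wide accumulations through $\mat_2$ then yields the stated Kronecker decomposition, with $\recmat_0$ collecting the $\step$-independent terms and $\recmat_e$ the terms proportional to $\step$.

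For the descent coordinate I would use the defining relation $\update[\leftEigvec]^{\top}\vt[\mixing]=\vt[\leftEigvec]^{\top}$ to reduce $\vt[\leftEigvec][\run+\contractInt+1]^{\top}\vt[\jstate][\run+\contractInt+1]$ to a single effective gradient step from $\vt[\leftEigvec][\run+\contractInt]^{\top}\vt[\jstate][\run+\contractInt]$, of size $\step\vt[\effstep][\run+\contractInt]$ with $\vt[\effstep]=\vt[\leftEigvec]^{\top}\vt[\sampMat]\vt[\rightEigvec]$. Expanding the square and handling the inner product exactly as in \eqref{eq:dt-salar-product} --- writing $\inter[\jgstate]=\vt[\jvrgvec]$ on active nodes, approximating $\vt[\jvrgvec]$ by $\vt[\rightEigvec]\ones^{\top}\vt[\jvrgvec]$ via \cref{lem:Gt-bound}a, identifying $\ones^{\top}\vt[\jvrgvec]$ with $\sumworker\grad\vw[\obj](\vwt[\state])$ by \cref{lem:yt-gt-sum}, and invoking convexity and strong convexity of $\obj$ together with \cref{lem:basic}b for the consensus gap --- I obtain a strong-convexity term with prefactor $\vt[\effstep][\run+\contractInt]$. \cref{lem:effstep-control} squeezes $\ex[\vt[\effstep][\run+\contractInt]\,(\cdot)]$ between $\minv{\effstep}(\cdot)$ and $(\cdot)$, turning this into the contraction $-\cst_1\step$ with $\cst_1$ of order $\minv{\effstep}\str$ and simultaneously controlling the cross terms carrying the sampling and tracking noise; the second-order term $\step^2\norm{\vt[\leftEigvec]^{\top}\vt[\sampMat]\inter[\jgstate]}^2$ is bounded through \cref{lem:Gt-bound}b (after the $\vt[\errgap]$ substitution) and downgraded to an $O(\step)$ contribution by the assumed $\step\le\minv{\effstep}/(16\nWorkers\lips)$.

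For $\vt[\varX]$ and $\vt[\alt{\varY}]$ I would unroll the $\jstate$- and $\jgstate$-recurrences over $[\run,\run+\contractInt+1]$ so as to isolate a leading term --- $(\Id-\avgMat)\vtInt[\mixing][\run+\contractInt][\run]\vt[\jstate][\run]$ respectively $\vtInt[\mixingalt][\run+\contractInt][\run](\Id-\vt[\rightEigvec][\run]\ones^{\top})\vt[\jgstate][\run]$ --- plus $\contractInt+1$ perturbations injected one per iteration. After a Young split in the spirit of \eqref{eq:Young-mixing} with weight tuned to $\cfactor$, \cref{lem:mixing-contraction-multi} contracts the leading term by $\tfrac{1+\cfactor}{2}$; every injected perturbation is premultiplied by a product of at most $\contractInt$ row- or column-stochastic matrices, of spectral norm at most $\nWorkers^{\contractInt/2}$ by \cref{lem:basic}c, and has squared norm controlled either by $\norm{\vt[\jvrgvec][\runalt]}^2$ (for $\varX$: an $O(\step^2)$ contribution handled by \cref{lem:Gt-bound}b) or by $\vt[\alt{\cmeasure}][\runalt]$ (for $\alt{\varY}$: by \cref{lem:basic}a, entering with an $O(1)$ coefficient and producing the $\bmat^4_{4,3}\otimes\mat_2$ block). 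For $\vt[\alt{\cmeasure}]$ I would use the deterministic $\vwt[\cstate]$-update, splitting each node by whether it is sampled somewhere in the window: the contribution of nodes left untouched throughout contracts, by a factor at most $1-\vw[\sampprob]\le1-\minv{\sampprob}$ per step, whereas the complementary contribution is bounded by one-step increments of $\jstate$, themselves controlled through $(\vt[\mixing]-\Id)(\vt[\jstate]-\avgconcat)$ and \cref{lem:basic}c by $\vt[\varX][\runalt]$ plus an $O(\step)$ gradient term; a final Young step with weight $\minv{\sampprob}/2$ gives the $1-\minv{\sampprob}/2$ factor. Gathering all constants --- each a finite expression in $\str,\lips,\nWorkers,\cfactor,\contractInt,\minv{\sampprob},\minv{\effstep}$ and positive by construction --- yields $(\cst_{\indg})_{1\le\indg\le13}$.

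The hard part will be the bookkeeping that keeps every coarse estimate out of $\recmat_0$. Since a single mixing step no longer contracts, all intermediate consensus and tracking errors inside the window are only controllable through the $\nWorkers$-dependent spectral-norm bound of \cref{lem:basic}c, and one must verify that every such estimate is multiplied by $\step$ --- or by a higher power of $\step$, then reduced via $\step\le\minv{\effstep}/(16\nWorkers\lips)$ --- so that it ends up in $\recmat_e$ and not in $\recmat_0$. This is compounded by the facts that the Perron vectors $\vt[\leftEigvec],\vt[\rightEigvec]$ are random and time-varying and that $\vt[\leftEigvec]$ is not $\vt[\filter]$-measurable (it depends on later mixing matrices), so the clean conditioning behind \cref{lem:mixing-contraction} is unavailable and must be replaced throughout by \cref{lem:effstep-control}, the identity \eqref{eq:exp-ut}, and the probability-vector bound of \cref{lem:basic}b --- all while respecting that $\vt[\sampMat]$ and $\vt[\mixing]$ at a common time slot are in general correlated, which forbids any naive factorization of the expectations.
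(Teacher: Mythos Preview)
Your plan is essentially the paper's proof: single-step descent for $\alt{\distsol}$ via the relation $\update[\leftEigvec]^{\top}\vt[\mixing]=\vt[\leftEigvec]^{\top}$ and \cref{lem:effstep-control}; multi-step unrolling for $\varX$ and $\alt{\varY}$ with \cref{lem:mixing-contraction-multi} supplying the $(1+\cfactor)/2$ factor after a Young split tuned to $\cfactor$; a single-step bound for $\alt{\cmeasure}$ using the $\vwt[\cstate]$-update and a Young split with weight $\minv{\sampprob}/2$; and finally replacing every $\vt[\errgap][\runalt]$ by $\vt[\alt{\distsol}][\runalt]+\vt[\varX][\runalt]$ through smoothness and \cref{lem:basic}b, with the step-size condition $\step\le\minv{\effstep}/(16\nWorkers\lips)$ making the residual $\vt[\errgap]$ coefficient in the $\alt{\distsol}$ bound nonpositive. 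Two small sharpenings the paper uses that you may want to adopt: since a product of row- (resp.\ column-) stochastic matrices is again row- (resp.\ column-) stochastic, the accumulated mixing products have spectral norm at most $\sqrt{\nWorkers}$, not $\nWorkers^{\contractInt/2}$; and the $\alt{\cmeasure}$ recursion is handled purely one step at a time (no ``sampled somewhere in the window'' split is needed), which is what produces the $\bmat^{\contractInt+1}_{1,1}$ structure in that block.
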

\begin{proof}
We will make use of the inequality
\begin{equation}
    \label{eq:et-bound}
    \vt[\errgap]\le\lips^2(\vt[\alt{\distsol}]+\vt[\varX]).
\end{equation}
This comes from the simple fact that
\begin{equation}
    \notag
    \begin{aligned}
    \obj(\vt[\avg[\state]])-\obj(\sol)
    &\le \frac{\lips^2}{2}\norm{\vt[\avg[\state]]-\sol}^2\\
    &\le \lips^2(\norm{\vt[\leftEigvec^{\top}]\vt[\jstate]-\vt[\avg[\state]]}^2
    +\norm{\vt[\leftEigvec^{\top}]\vt[\jstate]-\sol}^2)\\
    &\le \lips^2(\norm{\vt[\jstate]-\avgconcat}^2
    +\norm{\vt[\leftEigvec^{\top}]\vt[\jstate]-\sol}^2).
    \end{aligned}
\end{equation}

Also notice that $\norm{\vt[\sampMat]\inter[\jgstate]}$ can be bounded as
\begin{equation}
    \label{eq:DtGt-bound}
    \norm{\vt[\sampMat]\inter[\jgstate]}
    =\norm{\vt[\sampMat]\vt[\jvrgvec]}
    \le\norm{\vt[\sampMat]}\norm{\vt[\jvrgvec]}
    \le\norm{\vt[\jvrgvec]}.
\end{equation}

Now, let us fix $\run\in\intinterval{\contractInt+1}{\nRuns-1}$.
We bound $\update[\varX],$ $\update[\alt{\varY}],$ $\update[\alt{\cmeasure}],$ and $\update[\alt{\distsol}]$ in terms of the previous values of these same variables.

\vskip 3pt
\textbf{Bounding $\update[\alt{\distsol}]$.}
We decompose
\begin{equation}
    \notag
    \begin{aligned}
    &\norm{\update[\leftEigvec^{\top}]\update[\jstate]-\sol}^2\\
    &~= \norm{\update[\leftEigvec^{\top}]\vt[\mixing]\vt[\jstate]
        -\step \update[\leftEigvec^{\top}]\vt[\mixing]\vt[\sampMat]\inter[\jgstate]
        -\sol}^2 \\
    &~= \norm{\vt[\leftEigvec^{\top}]\vt[\jstate]-\sol}^2
    + \step^2\norm{\vt[\leftEigvec^{\top}]\vt[\sampMat]\inter[\jgstate]}^2\\
    &~~ - 2\step
    \product{\vt[\leftEigvec^{\top}]\vt[\jstate]-\sol}{\vt[\leftEigvec^{\top}]\vt[\sampMat]\inter[\jgstate]}.
    \end{aligned}
\end{equation}
With \eqref{eq:DtGt-bound}, the second term can be easily bounded using
\begin{equation}
    \notag
    \norm{\vt[\leftEigvec^{\top}]\vt[\sampMat]\inter[\jgstate]}
    \le \norm{\vt[\leftEigvec]}\norm{\vt[\sampMat]\inter[\jgstate]}
    \le \norm{\vt[\jvrgvec]}.
\end{equation}
As for the third term, it can be further decomposed as
\begin{equation}
    \label{eq:descent-product-decompose}
    \begin{aligned}[b]
        &\product{\vt[\leftEigvec^{\top}]\vt[\jstate]-\sol}{\vt[\leftEigvec^{\top}]\vt[\sampMat]\inter[\jgstate]}\\
        &~= \product{\vt[\leftEigvec^{\top}]\vt[\jstate]-\vt[\avg[\state]]}
            {\vt[\leftEigvec^{\top}]\vt[\sampMat]\vt[\jvrgvec]}\\
        &~~+ \product{\vt[\avg[\state]]-\sol}
            {\vt[\leftEigvec^{\top}]\vt[\sampMat](\vt[\jvrgvec]-\vt[\rightEigvec]\ones^{\top}\vt[\jvrgvec])}\\
        &~~+ \product{\vt[\avg[\state]]-\sol}
            {\vt[\leftEigvec^{\top}]\vt[\sampMat]\vt[\rightEigvec]\ones^{\top}\vt[\jvrgvec]},
    \end{aligned}
\end{equation}
where we used again $\vt[\sampMat]\inter[\jgstate]=\vt[\sampMat]\vt[\jvrgvec]$.
Let us bound the three terms separately.
Using \cref{lem:Gt-bound}b, for any $\youngdelta_1>0$, we have
\begin{equation}
    \notag
    \begin{aligned}
    &\ex[-2\step\product{\vt[\leftEigvec^{\top}]\vt[\jstate]-\vt[\avg[\state]]}
            {\vt[\leftEigvec^{\top}]\vt[\sampMat]\vt[\jvrgvec]}]\\
    &~\le \ex[\step \youngdelta_1 \norm{\vt[\leftEigvec^{\top}]\vt[\jstate]-\vt[\avg[\state]]}^2
    + \frac{\step}{\youngdelta_1} \norm{\vt[\leftEigvec^{\top}]\vt[\sampMat]\vt[\jvrgvec]}^2]\\
    &~\le \step \youngdelta_1 \ex[\norm{\vt[\jstate]-\avgconcat}^2] + \frac{\step}{\youngdelta_1} \ex[\norm{\vt[\jvrgvec]}^2]]\\
    &~\le \step \youngdelta_1 \vt[\varX] + \frac{\step}{\youngdelta_1}(4\nWorkers\lips^2\vt[\varX]+(8\nWorkers+8)\vt[\alt{\cmeasure}]+4\vt[\alt{\varY}]+8\nWorkers^2\lips\vt[\errgap]).
    \end{aligned}
\end{equation}
With \cref{lem:basic}b and \cref{lem:Gt-bound}a, we can bound the second term of \eqref{eq:descent-product-decompose} for any $\youngdelta_2>0$ as
\begin{equation}
    \label{eq:Gt-vtGt}
    \begin{aligned}[b]
    &\ex[-2\step\product{\vt[\avg[\state]]-\sol}
            {\vt[\leftEigvec^{\top}]\vt[\sampMat](\vt[\jvrgvec]-\vt[\rightEigvec]\ones^{\top}\vt[\jvrgvec])}]\\
    &~\le \ex[\step \youngdelta_2 \norm{\vt[\avg[\state]]-\sol}^2
    + \frac{\step}{\youngdelta_2} \norm{\vt[\leftEigvec^{\top}]\vt[\sampMat](\vt[\jvrgvec]-\vt[\rightEigvec]\ones^{\top}\vt[\jvrgvec])}^2]\\
    &~\le 2\step \youngdelta_2 \ex[\norm{\vt[\avg[\state]]-\vt[\leftEigvec^{\top}]\vt[\jstate]}^2] 
    + 2\step \youngdelta_2 \ex[\norm{\vt[\leftEigvec^{\top}]\vt[\jstate]-\sol}^2] \\
    &~~+ \frac{\step}{\youngdelta_2} \ex[\norm{\vt[\jvrgvec]-\vt[\rightEigvec]\ones^{\top}\vt[\jvrgvec]}^2]\\
    &\le 2\step \youngdelta_2 \vt[\varX] + 2\step \youngdelta_2 \vt[\alt{\distsol}] 
    + \frac{\step}{\youngdelta_2}(2\vt[\alt{\varY}] + (4\nWorkers+4)\vt[\alt{\cmeasure}]).
    \end{aligned}
\end{equation}
To bound the last term of \eqref{eq:descent-product-decompose}, we use $\ones^{\top}\vt[\jvrgvec]=\sumworker \grad\vw[\obj](\vwt[\state])$. Following \eqref{eq:dt-salar-product}, we then get
\begin{equation}
\notag
\begin{aligned}[b]
    &\product{\vt[\avg[\state]]-\sol}
            {\ones^{\top}\vt[\jvrgvec]}\\
    &~\ge \frac{\nWorkers}{2}(\obj(\vt[\avg[\state]])-\obj(\sol))
    +\frac{\str\nWorkers}{4}\norm{\vt[\avg[\state]]-\sol}^2
    -\frac{\lips}{2}\norm{\vt[\jstate]-\avgconcat}^2\\
    &~\ge \frac{\nWorkers}{2}(\obj(\vt[\avg[\state]])-\obj(\sol))
    +\frac{\str\nWorkers}{8}\norm{\vt[\leftEigvec^{\top}]\vt[\jstate]-\sol}^2\\
    &~~-\frac{\str\nWorkers}{4}\norm{\vt[\avg[\state]]-\vt[\leftEigvec^{\top}]\vt[\jstate]}^2
    -\frac{\lips}{2}\norm{\vt[\jstate]-\avgconcat}^2.
\end{aligned}
\end{equation}
%
Applying \cref{lem:effstep-control} and \cref{lem:basic}b gives
\begin{equation}
    \notag
    \begin{aligned}
    \ex[-2\step\vt[\effstep]
        \product{\vt[\avg[\state]]-\sol}{\ones^{\top}\vt[\jvrgvec]}]
    &\le -\step\minv{\effstep}\nWorkers\vt[\errgap]
    -\frac{\step\minv{\effstep}\str\nWorkers}{4}\vt[\alt{\distsol}]\\
    &~+\step\left(\lips+\frac{\str\nWorkers}{2}\right)\vt[\varX].
    \end{aligned}
\end{equation}
We recall that $\vt[\effstep]=\vt[\rightEigvec]\vt[\sampMat]\vt[\leftEigvec]$.
Putting all together and choosing $\youngdelta_1=16\nWorkers\lips/\minv{\effstep}$ and $\youngdelta_2=\minv{\effstep}\str\nWorkers/16$, we obtain
%
\begin{align*}
    \update[\alt{\distsol}]
    &\le
    \left(1-\frac{\step\minv{\effstep}\str\nWorkers}{4}\right)\vt[\alt{\distsol}]
    -\step\minv{\effstep}\nWorkers\vt[\errgap]\\
    &~+\step\left(\lips+\frac{\str\nWorkers}{2}\right)\vt[\varX]
    +\frac{16\step\nWorkers\lips}{\minv{\effstep}}\vt[\varX]\\
    &~+\left(\frac{\step\minv{\effstep}}{16\nWorkers\lips}+\step^2\right)\\
    &~~~~~\cdot(4\nWorkers\lips^2\vt[\varX]
    +(8\nWorkers+8)\vt[\alt{\cmeasure}]+4\vt[\alt{\varY}]+8\nWorkers^2\lips\vt[\errgap])\\
    &~+\frac{\step\minv{\effstep}\str\nWorkers}{8}\vt[\varX]
    +\frac{\step\minv{\effstep}\str\nWorkers}{8}\vt[\alt{\distsol}]
    +\frac{16\step}{\minv{\effstep}\str\nWorkers}(2\vt[\alt{\varY}] + (4\nWorkers+4)\vt[\alt{\cmeasure}]).
\end{align*}
%
The coefficient of $\vt[\errgap]$ is $-\step\nWorkers\left(\frac{\minv{\effstep}}{2}
    -8\step\nWorkers\lips\right)$.
Since $\step\le\minv{\effstep}/(16\nWorkers\lips)$, this is non-positive and we have indeed
\begin{equation}
    \notag
    \update[\alt{\distsol}]
        \le
        (1-\cst_1\step)\vt[\alt{\distsol}]
        + \cst_2\step \vt[\varX] + \cst_3\step \vt[\alt{\cmeasure}] + \cst_4\step \vt[\alt{\varY}]
\end{equation}
for some positive constants $(\cst_{\indg})_{1\le\indg\le4}$.

\vskip 3pt
\textbf{Bounding $\update[\varX]$.}
Let $\runalt\in\oneto{\run}$. As the matrices $\seqinf[\vt[\mixing]]$ are row-stochastic, it holds
\begin{equation}
    \notag
    \begin{aligned}
    (\Id-\avgMat)\vtInt[\mixing][\run][\runalt+1](\Id-\avgMat)\vt[\mixing][\runalt]
    &= (\Id-\avgMat)\vtInt[\mixing][\run][\runalt+1]\vt[\mixing][\runalt]\\
    &= (\Id-\avgMat)\vtInt[\mixing][\run][\runalt+1]\vt[\mixing][\runalt](\Id-\avgMat).\\
    \end{aligned}
\end{equation}
Hence, for any $\youngdelta>0$, we can write
\begin{small}
\begin{equation}
    \label{eq:row-stoch-step}
    \begin{aligned}[b]
    &\norm{(\Id-\avgMat)\vtInt[\mixing][\run][\runalt+1](\Id-\avgMat)\update[\jstate][\runalt]}^2\\
    &~=\norm{(\Id-\avgMat)\vtInt[\mixing][\run][\runalt+1](\Id-\avgMat)
    \vt[\mixing][\runalt](\vt[\jstate][\runalt]-\step\vt[\sampMat][\runalt]\inter[\jgstate][\runalt])}^2\\
    &~\le
    (1+\youngdelta)
    \norm{(\Id-\avgMat)\vtInt[\mixing][\run][\runalt](\Id-\avgMat)\vt[\jstate][\runalt]}^2\\
    &~~+\left(1+\frac{1}{\youngdelta}\right)\step^2
    \norm{\Id-\avgMat}^2\norm{\vtInt[\mixing][\run][\runalt]}^2\norm{\vt[\sampMat][\runalt]\inter[\jgstate][\runalt]}^2\\
    &~\le
    (1+\youngdelta)
    \norm{(\Id-\avgMat)\vtInt[\mixing][\run][\runalt](\Id-\avgMat)\vt[\jstate][\runalt]}^2
    +\left(1+\frac{1}{\youngdelta}\right)\step^2\nWorkers\norm{\vt[\jvrgvec][\runalt]}^2.
    \end{aligned}
\end{equation}
\end{small}
In the last line we have used the fact that $\vtInt[\mixing][\run][\runalt]$ is row-stochastic so that  $\norm{\vtInt[\mixing][\run][\runalt]}\le\sqrt{\nWorkers}$ and the inequality $\norm{\vt[\sampMat][\runalt]\inter[\jgstate][\runalt]}\le\norm{\vt[\jvrgvec][\runalt]}$.
%

Since $\update[\jstate]-\avgconcat[\state][\run+1]=(\Id-\avgMat)\vtInt[\mixing][\run][\run+1](\Id-\avgMat)\update[\jstate]$, applying \eqref{eq:row-stoch-step} repeatedly then gives
\begin{equation}
    \notag
    \begin{aligned}
    \norm{\update[\jstate]-\avgconcat[\state][\run+1]}^2
    &\le (1+\youngdelta)^{\contractInt+1}\norm{(\Id-\avgMat)\vtInt[\mixing][\run][\run-\contractInt](\Id-\avgMat)\vt[\jstate][\runalt]}^2\\
    &~+ \step^2\nWorkers\left(1+\frac{1}{\youngdelta}\right)
    \sum_{\runalt=0}^{\contractInt} (1+\youngdelta)^\runalt \norm{\vt[\jvrgvec][\run-\runalt]}^2.
    \end{aligned}
\end{equation}
Let $\youngdelta=\frac{1}{\contractInt+1}\log\frac{1+\cfactor}{2\cfactor}>0$ so that for all $0\le\runalt\le\contractInt+1$, we have $(1+\youngdelta)^\runalt\le\frac{1+\cfactor}{2\cfactor}<\frac{1}{\cfactor}$.
Taking expectation in the above inequality and invoking \cref{lem:mixing-contraction-multi}a and \cref{lem:Gt-bound}b leads to
\begin{equation}
    \notag
    \begin{aligned}
    \update[\varX]
    \le \frac{1+\cfactor}{2}\vt[\varX][\run-\contractInt]
    +\frac{\step^2\nWorkers}{\cfactor}\left(1+\frac{1}{\youngdelta}\right)
    \sum_{\runalt=0}^{\contractInt}\vt[\Delta][\run-\runalt],
    \end{aligned}
\end{equation}
where $\vt[\Delta][\run-\runalt]=
    8\nWorkers^2\lips\vt[\errgap][\run-\runalt]+
    4\nWorkers\lips^2\vt[\varX][\run-\runalt]
    +(8\nWorkers+8)\vt[\alt{\cmeasure}][\run-\runalt]
    +4\vt[\alt{\varY}][\run-\runalt]$.
With \eqref{eq:et-bound} and $\step\le\minv{\effstep}/(16\nWorkers\lips)$ we thus see there exist positive constants $(\cst_{\indg})_{5\le\indg\le8}$ such that
\begin{equation}
    \notag
    \update[\varX]
    \le
    \frac{1+\cfactor_1}{2}\vt[\varX]
    + \step\sum_{\runalt=0}^{\contractInt}
    (\cst_5  \vt[\alt{\distsol}][\run-\runalt] + \cst_6\vt[\varX][\run-\runalt]
    + \cst_7 \vt[\alt{\cmeasure}][\run-\runalt] + \cst_8 \vt[\alt{\varY}][\run-\runalt]).
\end{equation}

\vskip 3pt
\textbf{Bounding $\update[\alt{\cmeasure}]$.}
By Young's inequality,
\begin{small}
\begin{equation}
    \notag
    \begin{aligned}
    \norm{\grad\vw[\obj](\vwtupdate[\cstate])-\grad\vw[\obj](\vwtupdate[\state])}^2
    &\le \frac{2-\vw[\sampprob]}{2-2\vw[\sampprob]}
    \norm{\grad\vw[\obj](\vwtupdate[\cstate])-\grad\vw[\obj](\vwt[\state])}^2\\
    &+ \frac{2-\vw[\sampprob]}{\vw[\sampprob]}
    \norm{\grad\vw[\obj](\vwt[\state])-\grad\vw[\obj](\vwtupdate[\state])}^2.
    \end{aligned}
\end{equation}
\end{small}
The update rule of $\vwt[\cstate]$ implies that
\begin{equation}
    \notag
    \begin{aligned}
    &\exoft[\norm{\grad\vw[\obj](\vwtupdate[\cstate])-\grad\vw[\obj](\vwt[\state])}^2]\\
    &~= (1-\vw[\sampprob])\norm{\grad\vw[\obj](\vwt[\cstate])-\grad\vw[\obj](\vwt[\state])}^2\\
    &~~+ \vw[\sampprob]
    \norm{\grad\vw[\obj](\vwt[\state])-\grad\vw[\obj](\vwt[\state])}^2\\
    &~= (1-\vw[\sampprob])\norm{\grad\vw[\obj](\vwt[\cstate])-\grad\vw[\obj](\vwt[\state])}^2.
    \end{aligned}
\end{equation}
With $\minv{\sampprob}=\min_{\worker\in\workers}\vw[\sampprob]$ as defined in \cref{lem:effstep-control}, we then have
\begin{equation}
    \notag
    \begin{aligned}
    &\exoft[\norm{\grad\vw[\obj](\vwtupdate[\cstate])-\grad\vw[\obj](\vwtupdate[\state])}^2]\\
    &~\le \left(1-\frac{\vw[\sampprob]}{2}\right)
    \norm{\grad\vw[\obj](\vwt[\cstate])-\grad\vw[\obj](\vwt[\state])}^2\\
    &~~+ \frac{2}{\vw[\sampprob]}
    \exoft[\norm{\grad\vw[\obj](\vwt[\state])-\grad\vw[\obj](\vwtupdate[\state])}^2]\\
    &~\le \left(1-\frac{\minv{\sampprob}}{2}\right)
    \norm{\grad\vw[\obj](\vwt[\cstate])-\grad\vw[\obj](\vwt[\state])}^2\\
    &~~+ \frac{2}{\minv{\sampprob}}
    \exoft[\norm{\grad\vw[\obj](\vwt[\state])-\grad\vw[\obj](\vwtupdate[\state])}^2].
    \end{aligned}
\end{equation}
Taking total expectation and summing from $\worker=1$ to $\nWorkers$ gives
\begin{equation}
    \notag
    \update[\alt{\cmeasure}]
    \le \left(1-\frac{\minv{\sampprob}}{2}\right)\vt[\alt{\cmeasure}]
    +\frac{2}{\minv{\sampprob}}\ex[\norm{\grad\objMulti(\vt[\jstate])-\grad\objMulti(\update[\jstate])}^2].
\end{equation}
By Lipschitz-continuity of the gradients, it holds $\norm{\grad\objMulti(\vt[\jstate])-\grad\objMulti(\update[\jstate])}\le\lips\norm{\vt[\jstate]-\update[\jstate]}$.
We then develop
\begin{equation}
    \notag
    \begin{aligned}
    \update[\jstate]-\vt[\jstate]
    &= \vt[\mixing](\vt[\jstate]-\step\vt[\sampMat]\inter[\jgstate]) - \vt[\jstate]\\
    &= (\vt[\mixing]-\Id)(\Id-\avgMat)\vt[\jstate] - \step \vt[\mixing]\vt[\sampMat]\inter[\jgstate].
    \end{aligned}
\end{equation}
With $\norm{\vt[\mixing]-\Id}^2\le2\norm{\vt[\mixing]}^2+2\norm{\Id}^2\le2\nWorkers+2$, we obtain that
\begin{equation}
    \notag
    \begin{aligned}
    &\norm{\grad\objMulti(\vt[\jstate])-\grad\objMulti(\update[\jstate])}^2\\
    &~\le \lips^2((4\nWorkers+4)\norm{\vt[\jstate]-\avgconcat[\state]}^2+2\step^2\nWorkers\norm{\vt[\jvrgvec]}^2).
    \end{aligned}
\end{equation}
Combining the above and applying \cref{lem:Gt-bound}b leads to
\begin{equation}
    \notag
    \begin{aligned}
    \update[\alt{\cmeasure}]
    &\le \left(1-\frac{\minv{\sampprob}}{2}\right)\vt[\alt{\cmeasure}]
    +\frac{2\lips^2}{\minv{\sampprob}}
    ((4\nWorkers+4)\vt[\varX]\\
    &~+2\step^2\nWorkers
    (4\nWorkers\lips^2\vt[\varX]+(8\nWorkers+8)\vt[\alt{\cmeasure}]+4\vt[\alt{\varY}]+8\nWorkers^2\lips\vt[\errgap])).
    \end{aligned}
\end{equation}
Using \eqref{eq:et-bound} and $\step\le\minv{\effstep}/(16\nWorkers\lips)$ we deduce the existence of positive constants $(\cst_{\indg})_{9\le\indg\le12}$ such that
\begin{equation}
    \notag
    \update[\alt{\cmeasure}] \le
    \cst_9\step  \vt[\alt{\distsol}] + \cst_{10} \vt[\varX]
        + \left(1-\frac{\minv{\sampprob}}{2}+\cst_{11}\step\right) + \cst_{12}\step \vt[\alt{\varY}].
\end{equation}

\vskip 3pt
\textbf{Bounding $\update[\alt{\varY}]$.}
Let $\runalt\in\oneto{\run}$. Using the column-stochasticity of $\vt[\mixingalt][\runalt]$ and the definition $\update[\rightEigvec][\runalt]=\vt[\mixingalt][\runalt]\vt[\rightEigvec][\runalt]$, we get
\begin{equation}
    \notag
    (\Id-\update[\rightEigvec][\runalt]\ones^\top)\vt[\mixingalt][\runalt]
    = \vt[\mixingalt][\runalt] - \update[\rightEigvec][\runalt]\ones^\top
    = \vt[\mixingalt][\runalt] - \vt[\mixingalt][\runalt]\vt[\rightEigvec][\runalt]\ones^\top.
\end{equation}
Hence, for any $\youngdelta>0$, it holds that
\begin{small}
\begin{align}
    \notag
    &\norm{\vtInt[\mixingalt][\run][\runalt+1](\Id-\update[\rightEigvec][\runalt]\ones^\top)\update[\jgstate][\runalt]}^2
    \\
    \notag
    &= \norm{\vtInt[\mixingalt][\run][\runalt+1](\Id-\update[\rightEigvec][\runalt]\ones^\top)
    \vt[\mixingalt][\runalt](\vt[\jgstate][\runalt]+\vt[\sampMat][\runalt](\grad\objMulti(\vt[\jstate][\runalt])-\grad\objMulti(\vt[\jcstate][\runalt])))}^2
    \\
    \notag
    &\le
    (1+\youngdelta)
    \norm{\vtInt[\mixingalt][\run][\runalt](\Id-\vt[\rightEigvec][\runalt]\ones^\top)\vt[\jgstate][\runalt]}^2
    \\
    \notag
    &~+\left(1+\frac{1}{\youngdelta}\right)
    \norm{\vtInt[\mixingalt][\run][\runalt]-\vtInt[\mixingalt][\run][\runalt]\vt[\rightEigvec][\runalt]\ones^\top}^2
    \norm{\vt[\sampMat][\runalt]}^2
    \norm{\grad\objMulti(\vt[\jstate][\runalt])-\grad\objMulti(\vt[\jcstate][\runalt])}^2
    \\
    \notag
    &\le
    (1+\youngdelta)
    \norm{\vtInt[\mixingalt][\run][\runalt](\Id-\vt[\rightEigvec][\runalt]\ones^\top)\vt[\jgstate][\runalt]}^2\\
    &~+ 4\nWorkers\left(1+\frac{1}{\youngdelta}\right)
    \norm{\grad\objMulti(\vt[\jstate][\runalt])-\grad\objMulti(\vt[\jcstate][\runalt])}^2.
    \label{eq:column-stoch-step}
\end{align}
\end{small}
In the last inequality we have used
\begin{equation}
    \notag
    \norm{\vtInt[\mixingalt][\run][\runalt]-\vtInt[\mixingalt][\run][\runalt]\vt[\rightEigvec][\runalt]\ones^\top}
    \le\norm{\vtInt[\mixingalt][\run][\runalt]}+\norm{\vtInt[\mixingalt][\run][\runalt]\vt[\rightEigvec][\runalt]\ones^\top}\le2\sqrt{\nWorkers}
\end{equation}
which is true because both $\vtInt[\mixingalt][\run][\runalt]$ and $\vtInt[\mixingalt][\run][\runalt]\vt[\rightEigvec][\runalt]\ones^\top$ are column-stochastic.

Since $\update[\jgstate]-\update[\rightEigvec]\ones^\top\update[\jgstate]=\vtInt[\mixingalt][\run][\run+1](\Id-\update[\rightEigvec]\ones^\top)\update[\jgstate]$, applying \eqref{eq:column-stoch-step} repeatedly then gives
\begin{equation}
    \notag
    \begin{aligned}[b]
    &\norm{\update[\jgstate]-\update[\rightEigvec]\ones^\top\update[\jgstate]}^2\\
    &~\le (1+\youngdelta)^{\contractInt+1}
    \norm{\vtInt[\mixingalt][\run][\run-\contractInt]
    (\Id-\vt[\rightEigvec][\run-\contractInt]\ones^\top)\vt[\jgstate][\run-\contractInt]}^2\\
    &~~+4\nWorkers\left(1+\frac{1}{\youngdelta}\right)
    \sum_{\runalt=0}^{\contractInt} (1+\youngdelta)^\runalt
    \norm{\grad\objMulti(\vt[\jstate][\run-\runalt])-\grad\objMulti(\vt[\jcstate][\run-\runalt])}^2.
    \end{aligned}
\end{equation}
%
Let us take $\youngdelta=\frac{1}{\contractInt+1}\log\frac{1+\cfactor}{2\cfactor}>0$ as before. Taking total expectation in the above inequality and invoking \cref{lem:mixing-contraction-multi}b leads to
\begin{equation}
    \notag
    \begin{aligned}
    \update[\alt{\varY}]
    &\le
    \frac{1+\cfactor_2}{2}\vt[\alt{\varY}][\run-\contractInt]
    + \frac{4\nWorkers}{\cfactor}\left(1+\frac{1}{\youngdelta}\right)
    \sum_{\runalt=0}^{\contractInt} \vt[\alt{\cmeasure}][\run-\runalt].
    \end{aligned}
\end{equation}
We set $\cst_{13} = \frac{4\nWorkers}{\cfactor}\left(1+\frac{1}{\youngdelta}\right)$.

\vskip 3pt
\textbf{Conclude.}
Putting all together we get exactly \eqref{eq:recineq-matrix-general}.
\end{proof}

\subsubsection{Geometric convergence of PPDS}
From \cref{lem:recineq-matrix-general}, we are now in position to prove the geometric convergence of \ac{PPDS} by showing that the spectral radius of $\recmat_0+\step\recmat_e$ is smaller than $1$ for $\step>0$ sufficiently small.

\begin{proof}[Proof of \cref{thm:cvg}]
In the following we analyze the eigenvalues of $\recmat_0+\step\recmat_e$ with help of matrix perturbation theory.
We first notice that $\recmat_0$ is a block-triangular matrix. Its characteristic polynomial can be easily computed and is given\;by 
\begin{equation}
\notag
P_{\recmat_0}(\scalar)=\scalar^{2\contractInt}(\scalar-1)
\left(\scalar-\left(1-\frac{\minv{\sampprob}}{2}\right)\right)
\left(\scalar^{\contractInt+1}-\frac{1+\cfactor}{2}\right)^2.
\end{equation}
This shows that the spectral radius of $\recmat_0$ is $1$ and $1$ is also the unique eigenvalue of largest modulus of the matrix.

Let us denote by $\eigv_1=1, \eigv_2, \ldots, \eigv_{4(\contractInt+1)}$ the eigenvalues of $\recmat_0$ so that $\abs{\eigv_\indg}<1$ for all $\indg\in\intinterval{2}{4(\contractInt+1)}$.
By continuity of the eigenvalues, for any $\smallradius>0$ there exists $\delta>0$ such that if $\step<\delta$, for any $\eigv_\indg$ of multiplicity $m$ the matrix $\recmat_0+\step\recmat_e$ has exactly $m$ eigenvalues (counting multiplicity) in $\ball(\eigv_\indg,\smallradius)$, the open disk centered at $\eigv_\indg$ with radius $\smallradius$; see  \cite[Chap. 5.1]{Kato13}.
Let us take $\smallradius$ small enough such that all the eigenvalues of $\recmat_0+\step\recmat_e$ are smaller than $1-\smallradius$ in modulus except the greatest one. 
For $\step<\delta$, we can then define $\eigv_1(\step)$ as the unique eigenvalue of $\recmat_0+\step\recmat_e$ that is in $\ball(1,\smallradius)$. We will now show that $\abs{\eigv_1(\step)}<1$ for $\step$ sufficiently small.
For this, let
\begin{equation}
    \notag
    \leftEigvec = [1 ~ 0 ~ \ldots ~ 0]^{\top},
    ~~ \rightEigvec = [\underbrace{1 ~ \ldots ~ 1}_{\contractInt+1~ \text{times}}~ 0 ~\ldots~ 0]^{\top}
\end{equation}
be respectively the left and the right eigenvector of $\recmat_0$ associated with the eigenvalue $1$.
By \cite[Th. 6.3.12]{JH13} (see also \cite[Th. 1]{GLO20}), we have
\begin{equation}
    \notag
    \eigv_1'(0) = \frac{\leftEigvec^{\top}\recmat_e\rightEigvec}{\leftEigvec^{\top}\rightEigvec}=-\cst_1<0.
\end{equation}
As a consequence, $\abs{\eigv_1(\step)}<1$ for $\step$ sufficiently small and subsequently $\sradius(\recmat_0+\step\recmat_e)<1$. 

In order to conclude, we need to get rid of the dependence on $\nRuns$ which plays a role in the definition of the vectors $(\vt[\leftEigvec])_{1\le\run\le\nRuns}$ and the quantities $(\vt[\alt{\distsol}])_{1\le\run\le\nRuns}$.
We recall that $\vt[\distsol]=\ex[\norm{\vt[\avg[\state]]-\sol}^2]$.
As in \eqref{eq:Gt-vtGt}, we have both $\vt[\distsol]\le2\vt[\alt{\distsol}]+2\vt[\varX]$ and $\vt[\alt{\distsol}]\le2\vt[\distsol]+2\vt[\varX]$.
Let us define $\vt[\recvec'']$ by replacing $(\vt[\alt{\distsol}][\runalt])_{\run\le\runalt\le\run+\contractInt}$ by $(\vt[\distsol][\runalt])_{\run\le\runalt\le\run+\contractInt}$ in $\vt[\alt{\recvec}]$.
The above inequalities can then be translated into $\vt[\recvec'']\le \mat\vt[\alt{\recvec}]$ and $\vt[\alt{\recvec}]\le \mat\vt[\recvec'']$ for a non-negative matrix $\mat$ properly defined.
Note that neither $\mat$ nor $\recmat_0+\step\recmat_e$ depend on $\run$ or $\nRuns$.
Therefore, the inequality
\begin{equation}
    \notag
    \vt[\recvec'']
    \le \mat (\recmat_0+\step\recmat_e)^{\run-1} \mat \vt[\recvec''][1]
\end{equation}
which holds for all $\run\in\N$ guarantees the geometric convergence of $ \vt[\recvec''] $ and subsequently of all the relevant quantities when $\step$ is small enough.

Finally, from the geometric convergence of $\ex[\norm{\vwt[\state]-\sol}^2]$, we deduce that $\vwt[\state]$ converges to $\sol$ almost surely by using Markov's inequality and the Borel–Cantelli lemma.
\end{proof}

\section{Simulations}
\label{sec:sim}
In this section, we illustrate the interest of PPDS for asynchronous decentralized optimization on a) a synthetic ridge regression problem; and b) a logistic regression problem on a real dataset. 
Ablation study of the how different network parameters influence the performance of PPDS is provided in \cref{apx:exp-setup}.\footnote{%
The code to reproduce the experiments can be found at~\url{https://github.com/yassine-laguel/ppds}.}

\begin{figure*}[htp]
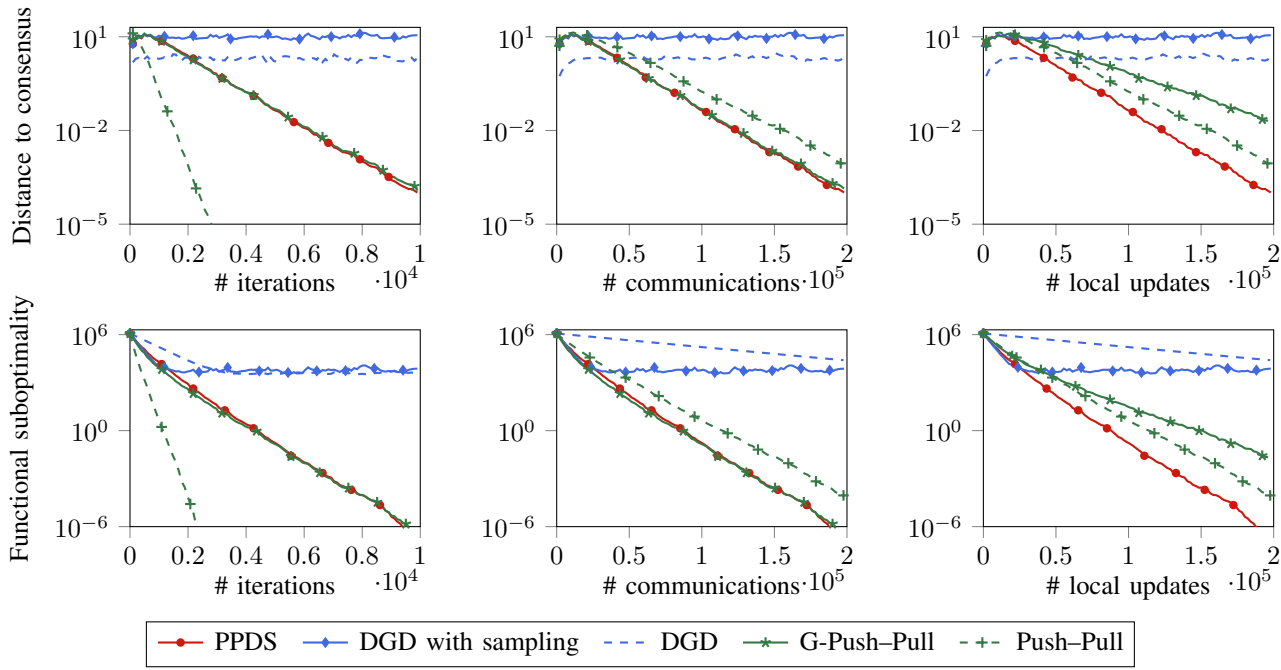

  \centering
\definecolor{color0}{rgb}{0.8,0.101960784313725,0.0549019607843137}
\definecolor{color1}{rgb}{0.254901960784314,0.411764705882353,0.882352941176471}
\definecolor{color2}{rgb}{0.215686274509804,0.480392156862745,0.272549019607843}

\begin{tikzpicture}

\begin{groupplot}[group style={group size=3 by 2,horizontal sep=0.1\textwidth,vertical sep=4em},
ylabel style={yshift=0.6em},
height=4.2cm,
width = 0.3\textwidth,
tick align=outside,
tick pos=left,
ymin=1e-5, ymax=20,
ymode=log,
]

\input{Content/figs/synthetic_consensus.tikz}
\input{Content/figs/synthetic_losses.tikz}

\end{groupplot}
\end{tikzpicture}

\vspace{0.5em}

\begin{tikzpicture}

\matrix[
    matrix of nodes,
    anchor=north,
    draw,
    inner sep=0.2em,
    draw
  ]
  {
    \ref{plots:plot1}& PPDS &[5pt]
    \ref{plots:plot2}& DGD with sampling &[5pt]
    \ref{plots:dashed1}& DGD  &[5pt]
    \ref{plots:plot3}& G-Push--Pull &[5pt]
    \ref{plots:dashed2}& Push--Pull  \\};
\end{tikzpicture}

     \caption{\small{Numerical illustrations for Ridge regression on synthetic dataset.}}
     \label{fig:numerical_illustration}
\end{figure*}

\begin{table}[h!]
\caption{\small{Datasets and graph description}\label{table:datasets}}
\centering
\begin{tabular}{lcc}
\toprule
       & Synthetic & EMNIST \\
\midrule
Number of features $\vdim$   & $10$      & $784$ \\
Number of examples      & $10000$      & $2500$ \\
\midrule
Devices $\nWorkers$   & $100$      & $50$ \\
Local size $n_{\text{local}}$   & $100$      & $50$ \\
RGG radius $r$   & $0.2$      & $0.3$ \\
Sampled nodes / round   & $20$      & $10$ \\
Sampled neighbors / communication   & $1$      & $1$ \\
\bottomrule
\end{tabular}
\end{table}

\subsection{Dataset, tasks and models}
\label{sec:num:datasets}

For both problems, we minimize an objective of the form
$$
\obj(\point) =
    \frac{1}{\nWorkers} \sumworker  \underbrace{ \left( \sum_{j=1}^{n_{\text{local}}} \obj_{\worker,j}(\point) + \vw[\lambda]\|\point\|_2^2 \right) }_{\vw[\obj](\point)}
$$
that is, each worker $\worker$ has a local dataset of $n_{\text{local}}$ examples and a Tikhonov regularization term with parameter  $\vw[\lambda] = 1/n_{\text{local}}$. Since the local objectives are convex, this regularization make the problem strongly convex. 

We form the communication networks by generating Random Geometric Graphs (RGG) using the library \texttt{networkx} \cite{hagberg2008exploring} with different number of nodes $\nWorkers$ and radius $r$ for each experiment. 
We consider the \textit{broadcast} setting illustrated in the third example of \cref{sec:special}.
Precisely, all activated nodes broadcast their models to 
one (randomly chosen) 
neighbor during a communication step.
We illustrate the effect of device sampling by comparing algorithms with \emph{full-device participation} and with \emph{random device sampling} ($20$ nodes for the synthetic dataset and $10$ nodes for EMNIST). The relevant parameters are reported in Table~\ref{table:datasets}.


\paragraph{Ridge regression on synthetic dataset} For this problem, the local losses are defined as
$$
    \obj_{\worker,j}(\point) = (\vw[b][\worker,j] - \point^\top \vw[a][\worker,j])^2
$$
where $(\vw[a][\worker,j], \vw[b][\worker,j]) \in  \vecspace \times \mathbb{R}$ are data points generated using the procedure \texttt{make\_regression} from \texttt{scikit-learn} \cite{sklearn_api}. Different seeds are used for different nodes, yielding statistically heterogeneous distributions between the nodes. 

\paragraph{Logistic regression on EMNIST} 
The EMNIST dataset \cite{cohen2017emnist} is comprised of images of handwritten digits and letters from several authors. We consider the problem of finding which character is written from its image. For this, we consider local losses of the form 
$$
  \obj_{\worker,j}(\point) = -  \vw[b][\worker,j] \log\left(  \softmax(\point^\top \vw[a][\worker,j]) \right)
$$		
where the $(\vw[a][\worker,j], \vw[b][\worker,j])$ are respectively $\vdim = 28\times 28$ gray-scale images of handwritten digits character and their associated one-hot label $\vw[b][\worker,j]\in (0-9, a-z, A-Z)$ totaling 62 classes. 
Each worker's local dataset comes from images from the same author.

\subsection{Algorithms, hyperparameters and evaluation metrics}\label{sec:num:algorithms}

We compare the proposed algorithm PPDS with several baselines: Decentralized Gradient Descent (DGD) with and without sampling, Push--Pull and G-Push--Pull.
The same broadcast communication scheme is applied to all the methods, and the same uniform sampling strategy is adopted whenever device sampling is involved.


\begin{figure*}[htp]
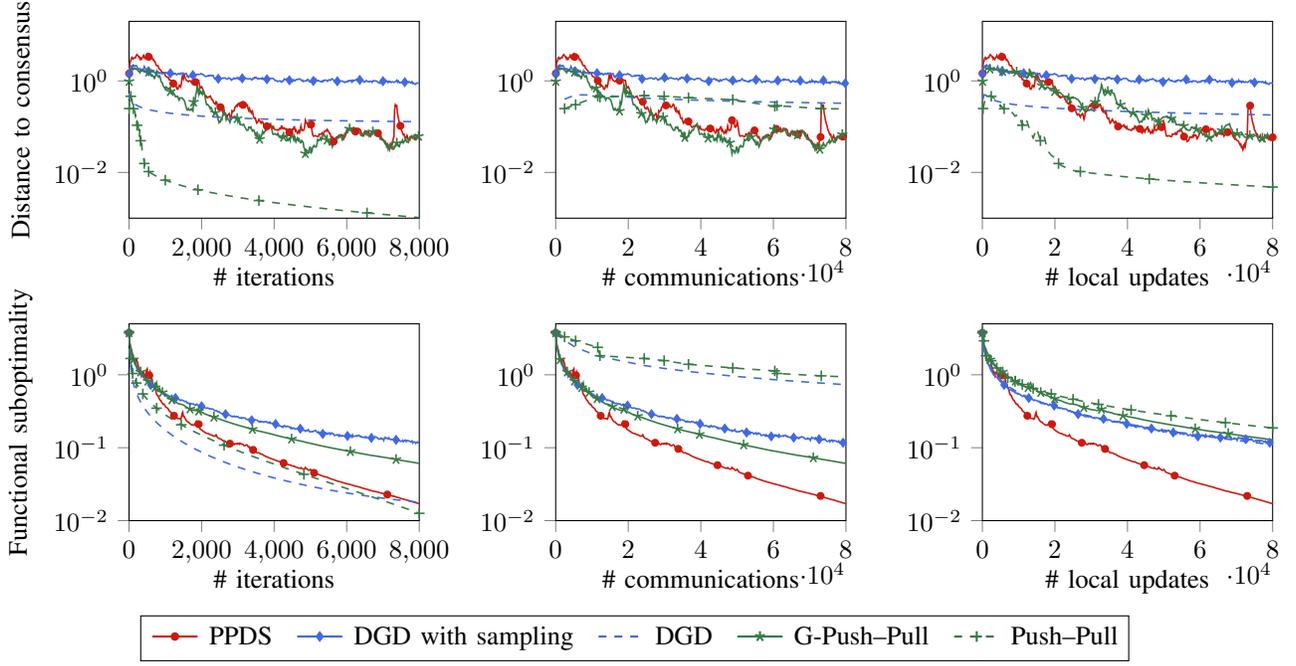

  \centering
\definecolor{color0}{rgb}{0.8,0.101960784313725,0.0549019607843137}
\definecolor{color1}{rgb}{0.254901960784314,0.411764705882353,0.882352941176471}
\definecolor{color2}{rgb}{0.215686274509804,0.480392156862745,0.272549019607843}

\begin{tikzpicture}

\begin{groupplot}[group style={group size=3 by 2,horizontal sep=0.1\textwidth,vertical sep=4em},
width = 0.3\textwidth,
height=4.2cm,
ylabel style={yshift=0.6em},
tick align=outside,
tick pos=left,
ymin=1e-3, ymax=20,
ymode=log,
]

\input{Content/figs/emnist_consensus.tikz}

\input{Content/figs/emnist_losses.tikz}

\end{groupplot}

\end{tikzpicture}

\vspace{0.5em}

\begin{tikzpicture}

\matrix[
    matrix of nodes,
    anchor=north,
    draw,
    inner sep=0.2em,
    draw
  ]
  {
    \ref{plots:plot1}& PPDS &[5pt]
    \ref{plots:plot2}& DGD with sampling &[5pt]
    \ref{plots:dashed1}& DGD  &[5pt]
    \ref{plots:plot3}& G-Push--Pull &[5pt]
    \ref{plots:dashed2}& Push--Pull  \\};
\end{tikzpicture}
     \caption{\small{Numerical illustrations for logistic regression on EMNIST.}}
     \label{fig:numerical_illustration2}
\end{figure*}

For each method, the stepsize is taken fixed, tuned with a coarse-to-fine strategy: we first select the stepsize $\step$ within $\{10^{-k}, 2\leq k\leq 5 \}$ yielding the best global training loss;
then, a second search is performed over $\{\eta_1 2 ^{k}, -2\leq k \leq 2 \}$. 

We report two evaluation metrics: 
\begin{enumerate*}[\textup(\itshape i\textup)]
    \item the distance to consensus $(1/\nWorkers)\sum_{\worker=1}^{\nWorkers}\norm{\vwt[\state]-\vt[\avg[\state]]}^2$; and
    \item  the functional suboptimality $(1/\nWorkers)\sum_{\worker=1}^\nWorkers \obj(\vwt[\state]) - \sol[\obj]$.
\end{enumerate*}
For the synthetic dataset, the optimal solution is computed by inversion of a linear system. For the real dataset, it is set as the best solution in terms of final training losses found by the implemented algorithms. 

For each experiment, we report these metrics in terms of three different measures: \begin{enumerate*}[\textup(\itshape i\textup)]
\item number of iterations;
\item communication cost, \ie the cumulative number of activated communication links; and
\item number of local updates.
\end{enumerate*}
These cover different aspects that influence the efficiency of distributed optimization.

\subsection{Numerical results}\label{sec:num:results}


First, we observe on \cref{fig:numerical_illustration,fig:numerical_illustration2} that the proposed method PPDS converges linearly,
as expected from \cref{thm:cvg}. Furthermore, looking at the right-hand plots, we see that PPDS outperforms all the other methods when it comes to measuring the functional optimality with respect to the number of local updates.
This illustrates that PPDS indeed saves computational resources, by an efficient interplay between computation and communications.
Concerning the communication complexity (middle plots), PPDS is at least as competitive as 
G-Push--Pull, which was shown in \cite{PSXN20} to beat other baselines. 
Finally, we observe that Push--Pull, as a synchronous gradient-tracking method, naturally achieves the best performance when measuring in terms of the number of iterations (left-hand plots), but tends to be less efficient when we consider the actual communication and computational costs. 



\section{Conclusions}

In this paper, we showed how device sampling can be incorporated in asynchronous decentralized gradient descent, by extending the Push--Pull method. We proved linear convergence of the method on strongly convex functions and validate our approach on problems with synthetic and real data.
This work also opens towards several research directions. 
This goes from the theoretical analysis of our method in non-strongly-convex, non-convex, or/and the stochastic setup (see \eg \cite{koloskova2021improved} for analysis of gradient tracking in these setups)
to the investigation of how our approach can be combined with other existing techniques such as local gradients computation and gradient compression.

\appendix
\subsection{Proof of \cref{lem:aux}}
\label{app:proof-aux}

\begin{proof}
\textit{a)}
Since $\vw[\obj]$ is $\lips$-smooth, it holds for all $\point, \alt{\point}\in\vecspace$ that
\begin{small}
\begin{equation}
    \notag
    \norm{\grad\vw[\obj](\point)-\grad\vw[\obj](\alt{\point})}^2
    \le2\lips(\vw[\obj](\point)-\vw[\obj](\alt{\point})
    -\product{\point-\alt{\point}}{\grad\vw[\obj](\alt{\point})}).
\end{equation}
\end{small}
Subsequently,
\begin{small}
\begin{align*}
    &\sumworker\norm{\grad\vw[\obj](\vwt[\state])-\grad\vw[\obj](\sol)}^2\\
    &~\le 2\sumworker\norm{\grad\vw[\obj](\vwt[\state])-\grad\vw[\obj](\vt[\avg[\state]])}^2
    + 2 \sumworker\norm{\grad\vw[\obj](\vt[\avg[\state]])-\grad\vw[\obj](\sol)}^2\\
    &~\le 2\lips^2\sumworker\norm{\vwt[\state]-\vt[\avg[\state]]}^2\\
    &~~+ 4 \lips\sumworker (\vw[\obj](\vt[\avg[\state]])-\vw[\obj](\sol)
    -\product{\vt[\avg[\state]]-\sol}{\grad\vw[\obj](\sol)})\\
    &~ = 2\lips^2\norm{\vt[\jstate]-\avgconcat}^2\\
    &~~+ 4\nWorkers\lips (\obj(\vt[\avg[\state]])-\obj(\sol) - 
    \product{\vt[\avg[\state]]-\sol}{\grad\obj(\sol)})\\
    &~ = 2\lips^2\norm{\vt[\jstate]-\avgconcat}^2
    + 4\nWorkers\lips (\obj(\vt[\avg[\state]])-\obj(\sol)).
\end{align*}
\end{small}
In the last line we used that $\grad\obj(\sol)=0$.
Taking expectation gives the desired inequality.

\vskip 4pt
\textit{b)} The inequality is straightforward from \textit{a)} and the following decomposition
\begin{small}
\begin{equation}
    \notag
    \begin{aligned}
    &\norm{\grad\objMulti(\vt[\jstate])-\grad\objMulti(\vt[\jcstate])}^2\\
    &~\le 2\norm{\grad\objMulti(\vt[\jstate])-\grad\objMulti(\ones^\top \sol)}^2
    + 2\norm{\grad\objMulti(\ones^\top \sol)-\grad\objMulti(\vt[\jcstate])}^2.
    \end{aligned}
\end{equation}
\end{small}

\vskip -1pt
\textit{c)}
We first decompose
\begin{small}
\begin{equation}
    \notag
    \norm{\vt[\jvrgvec]}^2 = \norm{(\Id-\avgMat)\vt[\jvrgvec]+\avgMat\vt[\jvrgvec]}^2
    = \norm{(\Id-\avgMat)\vt[\jvrgvec]}^2 + \norm{\avgMat\vt[\jvrgvec]}^2.
\end{equation}
\end{small}
For the first term we simply use the definition of $\vt[\jvrgvec]$ to obtain 
\begin{small}
\begin{equation}
    \notag
    \begin{aligned}[b]
    &\norm{(\Id-\avgMat)\vt[\jvrgvec]}^2\\
    &~\le 2\norm{(\Id-\avgMat)\jgstate}^2 +2\norm{(\Id-\avgMat)(\grad\objMulti(\vt[\jstate])-\grad\objMulti(\vt[\jcstate]))}^2\\
    &~ \le 2\norm{\vt[\jgstate]-\avgconcat[\gstate]}^2 +
    2\norm{\grad\objMulti(\vt[\jstate])-\grad\objMulti(\vt[\jcstate])}^2.
    \end{aligned}
\end{equation}
\end{small}
From \cref{lem:yt-gt-sum}b, we know that
$\sumworker\vwt[\vrgvec]=\sumworker\grad\vw[\obj](\vwt[\state])$ or equivalently $\avgMat\vt[\jvrgvec]=\avgMat\grad\objMulti(\vt[\jstate])$. Thus for the second term we use again $\grad\obj(\sol)=0$ to get
\begin{small}
\begin{equation}
    \notag
    \begin{aligned}
    \norm{\avgMat\vt[\jvrgvec]}^2
    &= \nWorkers \Big\|\frac{1}{\nWorkers}\sumworker\nabla\vw[\obj](\vwt[\state]))\Big\|^2\\
    &= \nWorkers \Big\|\frac{1}{\nWorkers}\sumworker\grad\vw[\obj](\vwt[\state])- \frac{1}{\nWorkers}\sumworker\grad\vw[\obj](\sol) \Big\|^2\\
    &\le \sumworker \norm{\grad\vw[\obj](\vwt[\state])-\grad\vw[\obj](\sol)}^2.
    \end{aligned}
\end{equation}
\end{small}
Combining the above, taking expectation, and using \textit{a)} and \textit{b)} gives the desired result.
\end{proof}

\subsection{Proof of \cref{eq:lyapunov-contraction}}
\label{app:proof-lyap}

\begin{proof}
Let $\recmat_{\indg}$ denotes the $\indg$-th column of $\recmat$ and $\mathbf{e}_{\indg}$ denote the $\indg$-th canonical vector of $\R^4$.
First, $\vvec^{\top}\recmat_{1} = 1-\frac{\step\str\nWorkersActive}{2\nWorkers} = (1-\frac{\step\str\nWorkersActive}{2\nWorkers}) ~ \vvec^{\top}\mathbf{e}_1$.\\
Since $\step\le\frac{(1-\cfactor)^2}{14\lips}\sqrt{\frac{\nWorkers}{\nWorkersActive}}$, it holds
\begin{small}
\begin{equation}
    \notag
    \begin{aligned}
    \vvec^{\top}\recmat_{2} &= \frac{\sqrt{\nWorkersActive}(1-\cfactor)}{\nWorkers^{\frac{3}{2}}}
    \Bigg(
    \frac{\step\lips}{1-\cfactor}\sqrt{\frac{\nWorkersActive}{\nWorkers}}
    +\frac{10\step^2\lips^2}{1-\cfactor}\left(\frac{\nWorkersActive}{\nWorkers}\right)^{\frac{3}{2}}\\
    &\hspace{7em}+\frac{1+\cfactor}{2} + \frac{20\step^2\lips^2\nWorkersActive}{\nWorkers(1-\cfactor)}
    +\frac{\step\lips}{4(1-\cfactor)}\sqrt{\frac{\nWorkersActive}{\nWorkers}}
    \Bigg) \\
    &\le
    \frac{\sqrt{\nWorkersActive}(1-\cfactor)}{\nWorkers^{\frac{3}{2}}}
    \left(
    \frac{5\step\lips}{4(1-\cfactor)}\sqrt{\frac{\nWorkersActive}{\nWorkers}}
    +\frac{30\step^2\lips^2\nWorkersActive}{\nWorkers(1-\cfactor)} +\frac{1+\cfactor}{2}
    \right)\\
    &\le \frac{\sqrt{\nWorkersActive}(1-\cfactor)}{\nWorkers^{\frac{3}{2}}}
    \frac{3+\cfactor}{4} = \frac{3+\cfactor}{4} ~ \vvec^{\top}\mathbf{e}_2 .
    \end{aligned}
\end{equation}
\end{small}
With $\step\le\frac{(1-\cfactor)^2}{2304\lips}\left(\frac{\nWorkers}{\nWorkersActive}\right)^{\frac{3}{2}}$, we have
\begin{small}
\begin{equation}
    \notag
    \begin{aligned}
    \vvec^{\top}\recmat_{3} &=
    \frac{\step(1-\cfactor)}{96\nWorkers\lips}
    \Bigg(
    \frac{192\step\lips}{1-\cfactor}\left(\frac{\nWorkersActive}{\nWorkers}\right)^2\\
    &\hspace{6em}+\frac{384\step\lips}{1-\cfactor}\left(\frac{\nWorkersActive}{\nWorkers}\right)^{\frac{3}{2}}
    +\frac{1+\cfactor}{2}
    \Bigg) \\
    &\le
    \frac{\step(1-\cfactor)}{96\nWorkers\lips}
    \left(
    \frac{576\step\lips}{1-\cfactor}\left(\frac{\nWorkersActive}{\nWorkers}\right)^{\frac{3}{2}}
    +\frac{1+\cfactor}{2}
    \right)\\
    &\le \frac{\step(1-\cfactor)}{96\nWorkers\lips}
    \frac{3+\cfactor}{4} = \frac{3+\cfactor}{4} ~ \vvec^{\top}\mathbf{e}_3 .
    \end{aligned}
\end{equation}
\end{small}
Similarly, using $\step\le\frac{1}{576\lips}\sqrt{\frac{\nWorkers}{\nWorkersActive}}$, we get
\begin{small}
\begin{equation}
    \notag
    \begin{aligned}
    \vvec^{\top}\recmat_{4} &=
    \frac{\step}{12\nWorkers\lips}
    \Bigg(
    48\step\lips\left(\frac{\nWorkersActive}{\nWorkers}\right)^2
    +96\step\lips\left(\frac{\nWorkersActive}{\nWorkers}\right)^{\frac{3}{2}}\\
    &\hspace{5.5em}+\frac{\nWorkersActive}{2\nWorkers}
    +1-\frac{\nWorkersActive}{\nWorkers}
    \Bigg) \\
    &\le
    \frac{\step}{12\nWorkers\lips}
    \left(1-\frac{\nWorkersActive}{2\nWorkers}
    +144\step\lips\left(\frac{\nWorkersActive}{\nWorkers}\right)^{\frac{3}{2}}
    \right)\\
    &\le \frac{\step}{12\nWorkers\lips}
    \left(1-\frac{\nWorkersActive}{4\nWorkers}\right) = \left(1-\frac{\nWorkersActive}{4\nWorkers}\right) ~ \vvec^{\top}\mathbf{e}_4.
    \end{aligned}
\end{equation}
\end{small}
As for $\vt[\errgap]$, we note that $\step\le\frac{(1-\cfactor)^2}{2304\lips}\left(\frac{\nWorkers}{\nWorkersActive}\right)^{\frac{3}{2}}<\frac{1}{120\lips}\left(\frac{\nWorkers}{\nWorkersActive}\right)^{\frac{3}{2}}$ and thus
\begin{small}
\begin{equation}
    \notag
    \begin{aligned}
    \vvec^{\top}\recbias &=
    -\frac{\step\nWorkersActive}{\nWorkers}
    +\frac{20\step^2\lips\nWorkersActive^2}{\nWorkers^2}
    +40\step^2\lips\left(\frac{\nWorkersActive}{\nWorkers}\right)^{\frac{3}{2}}
    +\frac{\step\nWorkersActive}{6\nWorkers}
    +\frac{\step\nWorkersActive}{3\nWorkers} \\
    &\le
    -\frac{\step\nWorkersActive}{2\nWorkers}
    +60\step^2\lips\left(\frac{\nWorkersActive}{\nWorkers}\right)^{\frac{3}{2}}
    \le 0.
    \end{aligned}
\end{equation}
\end{small}
As $\step\le\frac{(1-\cfactor)^2}{14\lips}\sqrt{\frac{\nWorkers}{\nWorkersActive}}$ implies that $1-\frac{\step\str\nWorkersActive}{2\nWorkers}\ge\frac{3+\cfactor}{4}$, we have 
\begin{align*}
   \vvec^{\top}  \recmat  \leq  \cvgrate ~ \vvec^{\top} \Id
\end{align*}
where the inequality is elementwise and $\cvgrate=\max\left(1-\frac{\step\str\nWorkersActive}{2\nWorkers},1-\frac{\nWorkersActive}{4\nWorkers}\right)$. Since all the involved terms are non-negative, combining with the above inequalities gives
\begin{align*}
    \vvec^{\top}\update[\recvec] = \vvec^{\top}  \recmat ~ \vt[\recvec]+ \vt[\errgap] ~ \vvec^{\top}\recbias \le\cvgrate~\vvec^{\top}\vt[\recvec]
\end{align*}
which concludes the proof.
\end{proof}

\subsection{Proof of \cref{lem:Gt-bound}}
\label{app:proof-gt-bound}
\begin{proof}
\textit{a)}
From the definition of $\vt[\vrgvec]$ we can write
\begin{small}
\begin{equation}
    \notag
    \begin{aligned}
    &\norm{\vt[\jvrgvec]-\vt[\rightEigvec]\ones^{\top}\vt[\jvrgvec]}^2\\
    &~= \norm{\vt[\jgstate]-\vt[\rightEigvec]\ones^{\top}\vt[\jgstate]
    +(\Id-\vt[\rightEigvec]\ones^{\top})(\grad\objMulti(\vt[\jstate])-\grad\objMulti(\vt[\jcstate]))}^2\\
    &~\le 2\norm{\vt[\jgstate]-\vt[\rightEigvec]\ones^{\top}\vt[\jgstate]}^2
    + 2\norm{\Id-\vt[\rightEigvec]\ones^{\top}}^2\norm{\grad\objMulti(\vt[\jstate])-\grad\objMulti(\vt[\jcstate])}^2.
    \end{aligned}
\end{equation}
\end{small}
We conclude by using \[\norm{\Id-\vt[\rightEigvec]\ones^{\top}}^2\le2\norm{\Id}^2+2\norm{\vt[\rightEigvec]\ones^{\top}}^2\le2\nWorkers+2.\]
and taking expectation over the above inequalities.

\vskip 3pt
\textit{b)}
By Young's inequality,
\begin{small}
\begin{equation}
    \notag
    \norm{\vt[\jvrgvec]}^2
    \le 2\norm{\vt[\jvrgvec]-\vt[\rightEigvec]\ones^{\top}\vt[\jvrgvec]}^2
    + 2\norm{\vt[\rightEigvec]\ones^{\top}\vt[\jvrgvec]}^2.
\end{equation}
\end{small}
Using $\ones^{\top}\vt[\jvrgvec]=\ones^{\top}\grad\objMulti(\jstate)$, $\grad\obj(\sol)=0$, and the fact that $\vt[\rightEigvec]$ is a probability vector, we deduce that
\begin{small}
\begin{equation}
    \notag
    \begin{aligned}
    \norm{\vt[\rightEigvec]\ones^{\top}\vt[\jvrgvec]}^2
    &=\sumworker\left\|\vwt[\rightEigvec]\sumworker[\workeralt]\grad\vw[\obj][\workeralt](\vwt[\state][\workeralt])\right\|^2\\
    &\le \left\|\sumworker[\workeralt]\grad\vw[\obj][\workeralt](\vwt[\state][\workeralt])\right\|^2\\
    &= \left\|\sumworker\grad\vw[\obj](\vwt[\state])-\sumworker\grad\vw[\obj](\sol)\right\|^2\\
    &\le \nWorkers \sumworker \left\|\grad\vw[\obj](\vwt[\state])-\grad\vw[\obj](\sol)\right\|^2.
    \end{aligned}
\end{equation}
\end{small}
Combining the above two inequalities with \textit{a)} and \cref{lem:aux}a we get the desired result.
\end{proof}

\subsection{Proof of \cref{lem:effstep-control}}
\label{app:proof-effstep}

\begin{proof}
Using the independence assumption, we can write
\begin{equation}
    \notag
    \ex[\vt[\effstep]\vt[\srvp]]=\ex[\update[\leftEigvec^{\top}]\vt[\mixing]\vt[\sampMat]\vt[\rightEigvec]\vt[\srvp]]=\ex[\update[\leftEigvec^{\top}]]\ex[\vt[\mixing]\vt[\sampMat]]\ex[\vt[\rightEigvec]\vt[\srvp]].
\end{equation}

From \cref{asm:matrices}c we deduce that
\begin{equation}
    \notag
    \ex[\vt[\mixing]\vt[\sampMat]]
    \ge \ex[\diag(\diaglow\ones)\vt[\sampMat]]
    = \diaglow\diag(\bb{\sampprob}),
\end{equation}
where $\bb{\sampprob}=(\vw[\sampprob])_{\worker\in\workers}$.
We have shown in \eqref{eq:exp-ut} that $\ex[\update[\leftEigvec]]=\leftEigvecA$.
Notice that all the elements of $\leftEigvecA$ are positive according to the Perron-Frobenius theorem.
Thus, $\minv{\leftEigvecAEle}>0$ and we have
\begin{equation}
    \notag
    \ex[\update[\leftEigvec^{\top}]]\ex[\vt[\mixing]\vt[\sampMat]]\ge\leftEigvecA^\top\diaglow\diag(\bb{\sampprob})\ge\minv{\effstep}\ones^\top
\end{equation}
where \cref{asm:sampling} ensures that $\minv{\sampprob}>0$ and thus $\minv{\effstep}>0$.

On the other hand, $\vt[\leftEigvec]$ being a probability vector we can always upper bound $\vt[\leftEigvec]^\top\vt[\sampMat]$ by $\ones^\top$.
Using the non-negativity of $\vt[\rightEigvec]$ and $\vt[\srvp]$, we then obtain
\begin{equation}
    \notag
    \minv{\effstep}\ex[\ones^{\top}\vt[\rightEigvec]\vt[\srvp]]
    \le \ex[\vt[\effstep]\vt[\srvp]]
    \le \ex[\ones^{\top}\vt[\rightEigvec]\vt[\srvp]].
\end{equation}
This is exactly \eqref{eq:effstep-control} since $\vt[\rightEigvec]$ is a probability vector.
\end{proof}
\subsection{Additional simulations: Influence of $\cfactor$ and $\nWorkersActive$}
\label{apx:exp-setup}

In this appendix we illustrate via two examples how \ac{PPDS} could be influenced by different values of $\cfactor$ and $\nWorkersActive$.
It is however worth noticing that what we present here is specific to the communication strategies that we consider and we may observe different results for other communication strategies.

\subsubsection{Setups}
We base ourselves on the ridge regression experiment introduced in \cref{sec:num:datasets}.
As for the underlying random geometric graph we increase the radius to $0.3$ for better network connectivity.
We then consider the following two communication strategies (mixing matrices taken to be bi-stochastic so that \cref{thm:cvg-bistoch} applies).
\begin{enumerate}[\bfseries a), itemsep=1.5pt]
    \item \textit{Communication between active nodes and their neighbors:}
    Let $j>0$. At each round $\run$ we randomly select $j$ neighbors for each active node, and take the union of the active nodes and these selected neighbors as the communication nodes.
    The mixing matrix $\vt[\mixing]=\vt[\mixingalt]$ is set as the Metropolis matrix of the subgraph induced by the communication nodes.
    \item \textit{Communication between randomly selected nodes:}
    In this second setup we decouple communication from computation.
    In each round, independently of the sampling of the active nodes, we sample $5$ nodes and for each of these nodes we sample $1$ neighbor to form a group of at most $10$ communication nodes.
    The mixing matrix $\vt[\mixing]=\vt[\mixingalt]$ is again the Metropolis matrix of the subgraph induced by the communication nodes.
    Through Monte-Carlo estimation we get $\cfactor\approx0.99$.
\end{enumerate}
In the experiments, we fix $\nWorkersActive=10$ and take $j\in\{2,4,\ldots,18\}$ in setup \textbf{a)}, which gives $\cfactor$ ranging from $0.97$ to $0.87$.
As for setup \textbf{b)}, we choose $\nWorkersActive\in\{10, 15, \ldots, 50\}$.
As before, we do a grid search for the stepsize $\step$ and choose the optimal one within the range $\{10^{-k}, 2\leq k\leq 5 \}$.

\begin{figure}
    \centering
    \includegraphics[width=0.235\textwidth,valign=t]{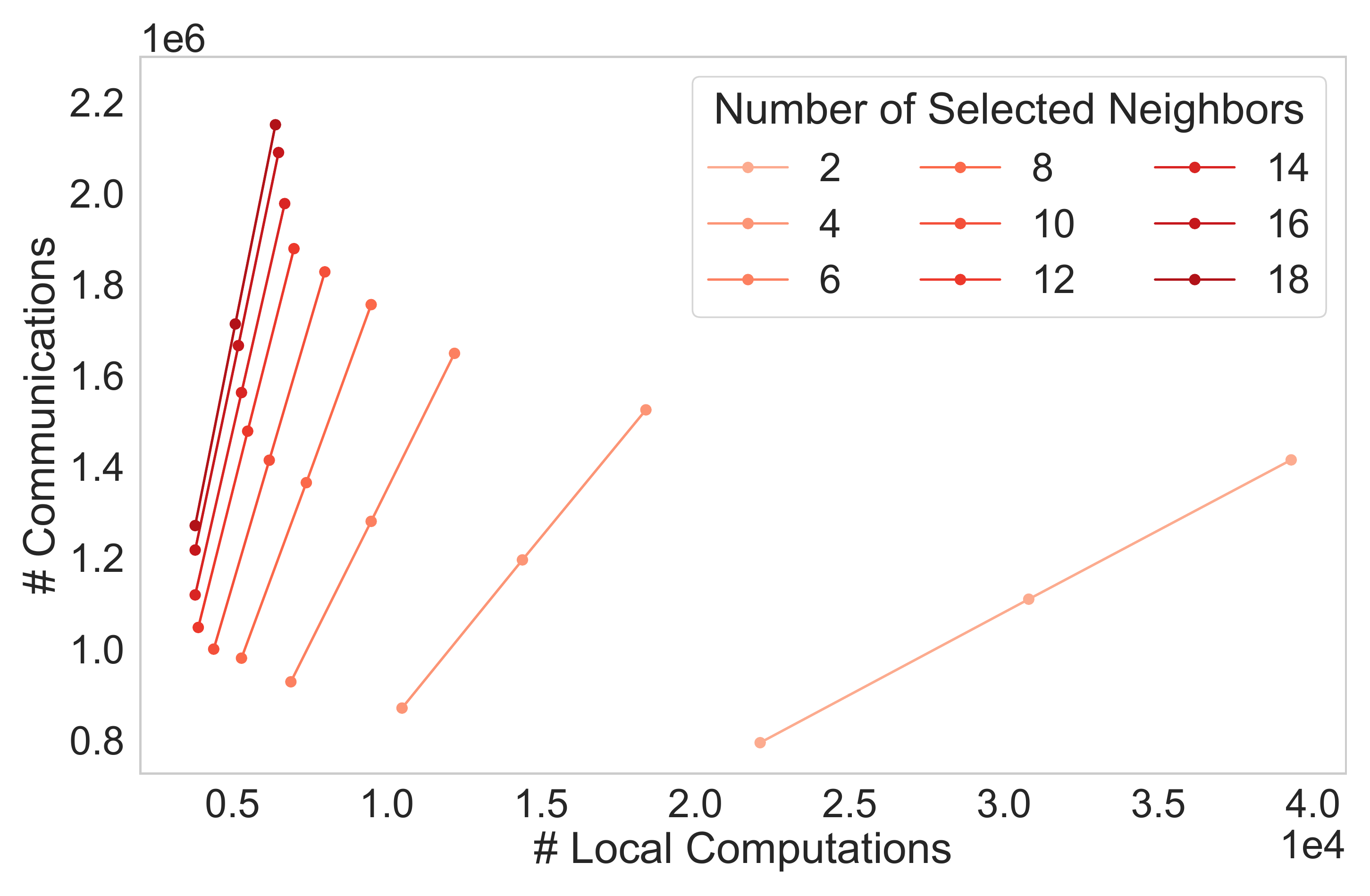}
    \includegraphics[width=0.24\textwidth,valign=t]{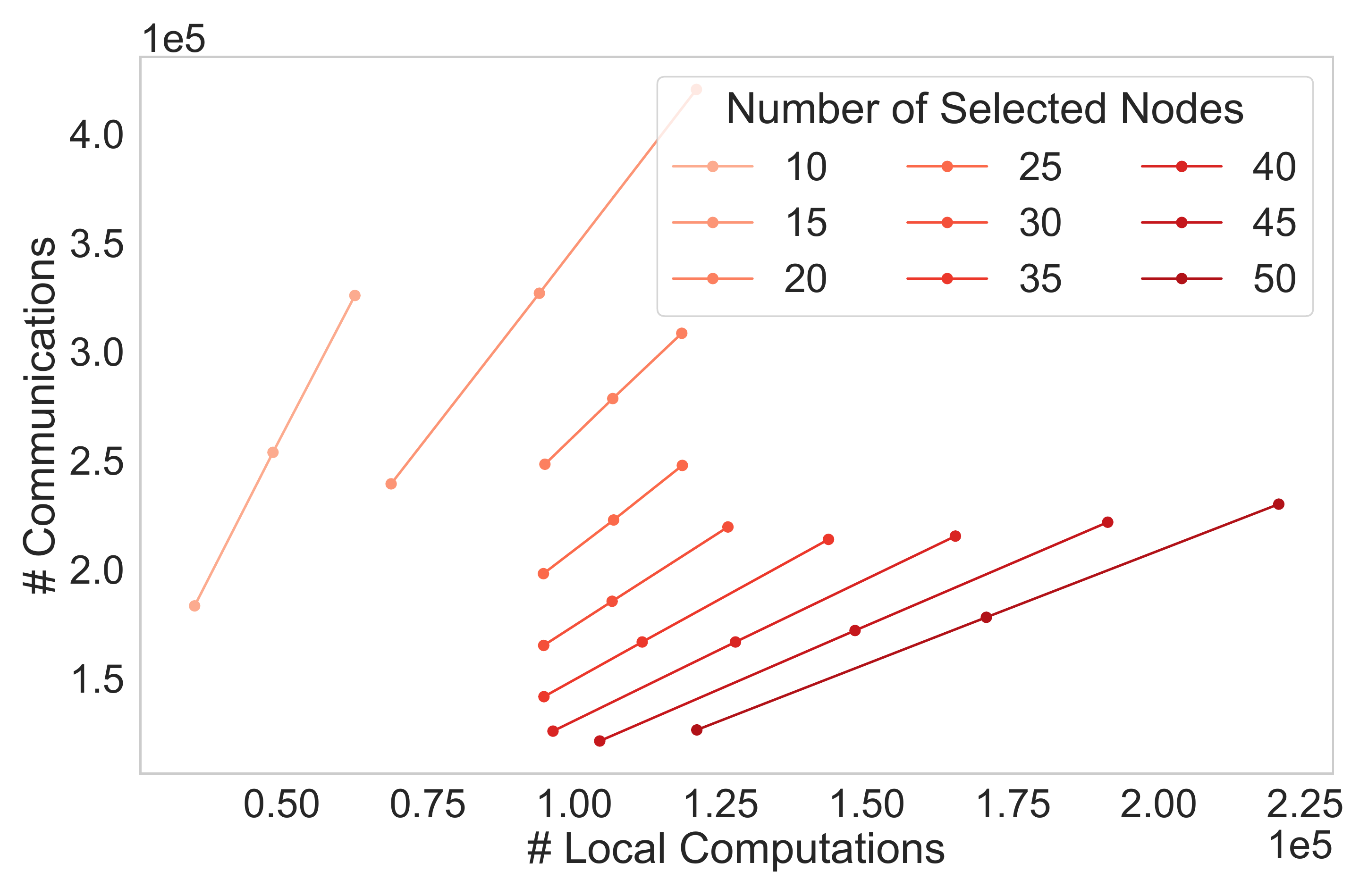}
    \caption{Illustration of the influence of $\cfactor$ and $\nWorkersActive$ on the performance of \acs{PPDS} for the ridge regression experiment.
    The left and the right figures are respectively for the communication strategies \textbf{a)} and \textbf{b)} described in \cref{apx:exp-setup}.
    We plot the number of local communications and computations that are needed for the algorithm to attain suboptimality values $10^{-2}$, $10^{-3}$, and $10^{-4}$ (from bottom left to top right). 
    Each line corresponds a specific configuration.}
    \label{fig:comm-comp-trade-off}
    \vspace{-1em}
\end{figure}

\subsection{Results}
In \cref{fig:comm-comp-trade-off} we plot the number of local communications and computations that are required for \ac{PPDS} to attain suboptimality values $10^{-2}$, $10^{-3}$, and $10^{-4}$ in different configurations.
For setup \textbf{a)} we observe a computation-communication trade-off: the more we communicate in each round, the less gradient computations but the more communications are needed to achieve a certain suboptimality value.
As for setup \textbf{b)} we observe two different behaviors depending on the stepsize.
At stepsize $\step=10^{-3}$ (the optimal stepsize for $\nWorkersActive\in\{10,15\}$) smaller the sample size $\nWorkersActive$ better the performance of the algorithm.
At stepsize $\step=10^{-4}$ (the optimal stepsize for $\nWorkersActive\in\{20,25,\ldots,50\}$) the best is to choose $\nWorkersActive\approx30$ for which communication and computation are well balanced and further increasing or decreasing $\nWorkersActive$ augments either computation or communication cost without really decreasing the other.


\vspace{-0.2em}
\section*{Acknowledgment}
This research was partially supported by
MIAI@Grenoble Alpes (ANR-19-P3IA-0003).

\ifCLASSOPTIONcaptionsoff
  \newpage
\fi



\bibliographystyle{IEEEtran}
\bibliography{references}
%



%

\vspace{-2.4em}

\begin{IEEEbiography}[{\vspace{-2em}\includegraphics[width=1in,height=1in,clip,keepaspectratio]{./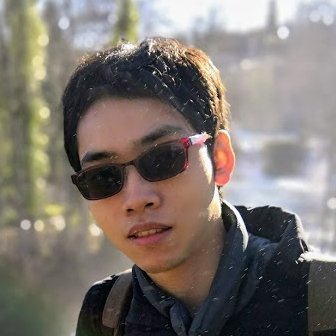}}]{Yu-Guan Hsieh}
  received a M.Sc. degree in Machine Learning and Applied Mathematics from ENS Paris Saclay in 2019 and a M.Sc. degree in Computer Science from ENS Paris in 2020. He is now pursuing his Ph.D. in Université Grenoble Alpes. His research interests include distributed optimization, online learning, and the study of learning-in-game dynamics.
\end{IEEEbiography}
\vspace{-4.8em}

\begin{IEEEbiography}[{\vspace{-2em}\includegraphics[width=1in,height=1in,clip,keepaspectratio]{./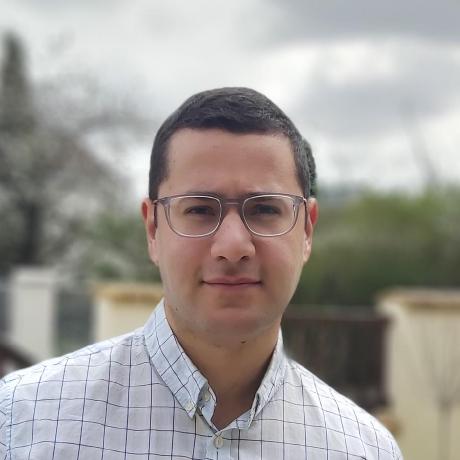}}]{Yassine Laguel}
  is a postdoctoral researcher at Rutgers University, working with Mert Gürbüzbalaban. Prior to this position, he received his Ph.D. from Univ. Grenoble Alpes, France, where he was advised by Jérôme Malick. His work focuses on the design and analysis of risk-averse algorithms in machine learning and stochastic programming, and their applications in both centralized, decentralized, or federated settings. 
\end{IEEEbiography}
\vspace{-4.4em}

\begin{IEEEbiography}[{\includegraphics[width=1in,height=1in,clip,keepaspectratio]{./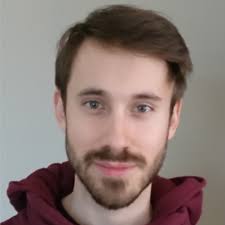}}]{Franck Iutzeler}
  was born in Besançon, France, in 1987. He received the Engineering degree from
  Telecom Paris, France, the M.Sc. degree from Sorbonne Université, Paris, France, both in 2010; and defended his Ph.D. in 2013 at Telecom Paris.
  In 2014--2015, he was a post-doctoral associate, first at Supélec (Gif-sur-Yvette, France) and then at Université Catholique de Louvain (Louvain-la-neuve, Belgium).
  Since 2015, he is an assistant professor at Univ. Grenoble Alpes.
  His research interests resolve around optimization methods and theory for data science, in particular focusing on machine learning problems, distributed optimization, robust optimization, and multi-agents systems.
\end{IEEEbiography}
\vspace{-3em}

\begin{IEEEbiography}[{\includegraphics[width=1in,height=1.25in,clip,keepaspectratio]{./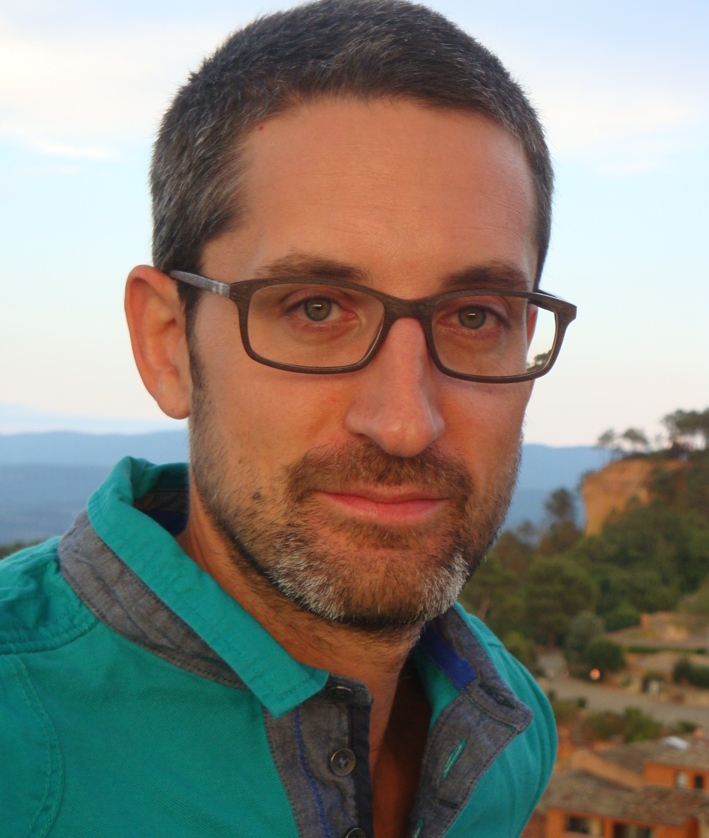}}]{Jérôme Malick}
  is a senior researcher at CNRS, leading since 2017 the DAO team of the Lab. Jean Kunztmann at University Grenoble Alpes.
  He completed his Ph.D. at Inria in 2005 and spent his post-doc at Cornell University in 2006. He received in 2009 the Robert Faure award for the most outstanding young researcher in Optim/OR in France. His research interests lie in mathematical optimization and its interactions, especially on distributed optimization, multi-agent learning, and optimization under uncertainty.
\end{IEEEbiography}








\end{document}